\newcommand{\nc}{\newcommand}
\newenvironment{proof}{{\noindent \textbf{Proof}\,\,}}{\hspace*{\fill}$\Box$\medskip}
\newtheorem{theorem}[equation]{Theorem}
\newtheorem{proposition}[equation]{Proposition}
\newtheorem{lemma}[equation]{Lemma}
\newtheorem{corollary}[equation]{Corollary}
\theoremstyle{definition}
\newtheorem{definition}[equation]{Definition}
\theoremstyle{remark}
\newtheorem{remark}[equation]{Remark}
\nc{\BA}{{\mathbb{A}}}
\nc{\BC}{{\mathbb{C}}}
\nc{\BM}{{\mathbb{M}}}
\nc{\BN}{{\mathbb{N}}}
\nc{\Bk}{{\Bbbk}}
\nc{\BF}{{\mathbb{F}}}
\nc{\BP}{{\mathbb{P}}}
\nc{\BR}{{\mathbb{R}}}
\nc{\BZ}{{\mathbb{Z}}}
\nc{\Mod}{{{\mathcal M}od}}
\nc{\CA}{{\mathcal{A}}}
\nc{\CB}{{\mathcal{B}}}
\nc{\D}{{\mathcal{D}}}
\nc{\CE}{{\mathcal{E}}}
\nc{\CF}{{\mathcal{F}}}
\nc{\CG}{{\mathcal{G}}}
\nc{\CL}{{\mathcal{L}}}
\nc{\CM}{{\mathcal{M}}}
\nc{\CN}{{\mathcal{N}}}
\nc{\CO}{{\mathcal{O}}}
\nc{\CP}{{\mathcal{P}}}
\nc{\CQ}{{\mathcal{Q}}}
\nc{\CS}{{\mathcal{S}}}
\nc{\CT}{{\mathcal{T}}}
\nc{\CU}{{\mathcal{U}}}
\nc{\CV}{{\mathcal{V}}}
\nc{\CW}{{\mathcal{W}}}
\nc{\IC}{{\mathcal{IC}}}
\nc{\cM}{{\check{\mathcal M}}{}}
\nc{\csM}{{\check{\mathcal A}}{}}
\nc{\oM}{{\overset{\circ}{\mathcal M}}{}}
\nc{\obM}{{\overset{\circ}{\mathbf M}}{}}
\nc{\oCA}{{\overset{\circ}{\mathcal A}}{}}
\nc{\obA}{{\overset{\circ}{\mathbf A}}{}}
\nc{\ooM}{{\overset{\circ}{M}}{}}
\nc{\osM}{{\overset{\circ}{\mathsf M}}{}}
\nc{\vM}{{\overset{\bullet}{\mathcal M}}{}}
\nc{\nM}{{\underset{\bullet}{\mathcal M}}{}}
\nc{\oD}{{\overset{\circ}{\mathcal D}}{}}
\nc{\obD}{{\overset{\circ}{\mathbf D}}{}}
\nc{\oA}{{\overset{\circ}{\mathbb A}}{}}
\nc{\op}{{\overset{\bullet}{\mathbf p}}{}}
\nc{\cp}{{\overset{\circ}{\mathbf p}}{}}
\nc{\oU}{{\overset{\bullet}{\mathcal U}}{}}
\nc{\fa}{{\mathfrak{a}}}
\nc{\fb}{{\mathfrak{b}}}
\nc{\fg}{{\mathfrak{g}}}
\nc{\fh}{{\mathfrak{h}}}
\nc{\fj}{{\mathfrak{j}}}
\nc{\fn}{{\mathfrak{n}}}
\nc{\fu}{{\mathfrak{u}}}
\nc{\fp}{{\mathfrak{p}}}
\nc{\fr}{{\mathfrak{r}}}
\nc{\ft}{{\mathfrak{t}}}
\nc{\fsl}{{\mathfrak{sl}}}
\nc{\hsl}{{\widehat{\mathfrak{sl}}}}
\nc{\hgl}{{\widehat{\mathfrak{gl}}}}
\nc{\hg}{{\widehat{\mathfrak{g}}}}
\nc{\chg}{{\widehat{\mathfrak{g}}}{}^\vee}
\nc{\hn}{{\widehat{\mathfrak{n}}}}
\nc{\chn}{{\widehat{\mathfrak{n}}}{}^\vee}
\nc{\fA}{{\mathfrak{A}}}
\nc{\fB}{{\mathfrak{B}}}
\nc{\fC}{{\mathfrak{C}}}
\nc{\fD}{{\mathfrak{D}}}
\nc{\fE}{{\mathfrak{E}}}
\nc{\fF}{{\mathfrak{F}}}
\nc{\fG}{{\mathfrak{G}}}
\nc{\fK}{{\mathfrak{K}}}
\nc{\fL}{{\mathfrak{L}}}
\nc{\fM}{{\mathfrak{M}}}
\nc{\fN}{{\mathfrak{N}}}
\nc{\fP}{{\mathfrak{P}}}
\nc{\fU}{{\mathfrak{U}}}
\nc{\bb}{{\mathbf{b}}}
\nc{\bc}{{\mathbf{c}}}
\nc{\be}{{\mathbf{e}}}
\nc{\bj}{{\mathbf{j}}}
\nc{\bn}{{\mathbf{n}}}
\nc{\bp}{{\mathbf{p}}}
\nc{\bq}{{\mathbf{q}}}
\nc{\bu}{{\mathbf{u}}}
\nc{\bv}{{\mathbf{v}}}
\nc{\bx}{{\mathbf{x}}}
\nc{\by}{{\mathbf{y}}}
\nc{\bw}{{\mathbf{w}}}
\nc{\bA}{{\mathbf{A}}}
\nc{\bB}{{\mathbf{B}}}
\nc{\bC}{{\mathbf{C}}}
\nc{\bD}{{\mathbf{D}}}
\nc{\bH}{{\mathbf{H}}}
\nc{\bM}{{\mathbf{M}}}
\nc{\bV}{{\mathbf{V}}}
\nc{\bW}{{\mathbf{W}}}
\nc{\bX}{{\mathbf{X}}}
\nc{\sA}{{\mathsf{A}}}
\nc{\sB}{{\mathsf{B}}}
\nc{\sD}{{\mathsf{D}}}
\nc{\sF}{{\mathsf{F}}}
\nc{\sK}{{\mathsf{K}}}
\nc{\sM}{{\mathsf{M}}}
\nc{\sO}{{\mathsf{O}}}
\nc{\sQ}{{\mathsf{Q}}}
\nc{\sP}{{\mathsf{P}}}
\nc{\sfp}{{\mathsf{p}}}
\nc{\sr}{{\mathsf{r}}}
\nc{\BK}{{\bar{K}}}
\nc{\tA}{{\widetilde{\mathbf{A}}}}
\nc{\TG}{{\tilde{G}}}
\nc{\TM}{{\widetilde{\mathbb{M}}}{}}
\nc{\tO}{{\widetilde{\mathsf{O}}}{}}
\nc{\TZ}{{\tilde{Z}}}
\nc{\tx}{{\tilde{x}}}
\nc{\tbv}{{\tilde{\bv}}}
\nc{\tfP}{{\widetilde{\mathfrak{P}}}{}}
\nc{\tz}{{\tilde{\zeta}}}
\nc{\tmu}{{\tilde{\mu}}}
\nc{\urho}{\underline{\rho}}
\nc{\uB}{\underline{B}}
\nc{\uC}{{\underline{\mathbb{C}}}}
\nc{\ui}{\underline{i}}
\nc{\uj}{\underline{j}}
\nc{\ofP}{{\overline{\mathfrak{P}}}}
\nc{\eps}{\varepsilon}
\nc{\hrho}{{\hat{\rho}}}
\nc{\one}{{\mathbf{1}}}
\nc{\two}{{\mathbf{t}}}
\nc{\Rep}{{\mathop{\operatorname{\rm Rep}}}}
\nc{\Sym}{{\mathop{\operatorname{\rm Sym}}}}
\nc{\Tot}{{\mathop{\operatorname{\rm Tot}}}}
\nc{\Spec}{{\mathop{\operatorname{\rm Spec}}}}
\nc{\Ker}{{\mathop{\operatorname{\rm Ker}}}}
\nc{\Hilb}{{\mathop{\operatorname{\rm Hilb}}}}
\nc{\End}{{\mathop{\operatorname{\rm End}}}}
\nc{\Ext}{{\mathop{\operatorname{\rm Ext}}}}
\nc{\Hom}{{\mathop{\operatorname{\rm Hom}}}}
\nc{\CHom}{{\mathop{\operatorname{{\mathcal{H}}\it om}}}}
\nc{\GL}{{\mathop{\operatorname{\rm GL}}}}
\nc{\gr}{{\mathop{\operatorname{\rm gr}}}}
\nc{\Id}{{\mathop{\operatorname{\rm Id}}}}
\nc{\rk}{{\mathop{\operatorname{\rm r}}}}
\nc{\de}{{\mathop{\operatorname{\rm def}}}}
\nc{\length}{{\mathop{\operatorname{\rm length}}}}
\nc{\supp}{{\mathop{\operatorname{\rm supp}}}}
\nc{\Bun}{{\mathsf{Bun}}}
\nc{\Cliff}{{\mathsf{Cliff}}}
\nc{\Gr}{{\mathsf{Gr}}}
\nc{\Fl}{{\mathsf{Fl}}}
\nc{\Fib}{{\mathsf{Fib}}}
\nc{\Coh}{{\mathsf{Coh}}}
\nc{\FCoh}{{\mathsf{FCoh}}}
\nc{\reg}{{\text{\rm reg}}}
\nc{\cplus}{{\mathbf{C}_+}}
\nc{\cminus}{{\mathbf{C}_-}}
\nc{\cthree}{{\mathbf{C}_*}}
\nc{\Qbar}{{\bar{Q}}}
\nc{\bh}{{\bar{h}}}
\nc{\bOmega}{{\overline{\Omega}}}
\nc{\seq}[1]{\stackrel{#1}{\sim}}
\def\dsp{\displaystyle}
\begin{document}

\title[Laumon Spaces and the Calogero-Sutherland Integrable System]{\Large{\textbf{Laumon Spaces and the Calogero-Sutherland Integrable System}}}

\author[Andrei Negut]{Andrei Negut}
\address{Princeton University, Department of Mathematics, Princeton, NJ 08544, USA}
\email{andrei.negut@@gmail.com}

\maketitle

\section{Introduction}

This paper is concerned with the Laumon quasiflag spaces $\CM_\gamma$, which parametrize flags of the form
\begin{equation}
0\subset \CF_1\subset \CF_2\subset...\subset \CF_{n-1}\subset \CO_{\BP^1}^n
\end{equation}
In the above, each $\CF_i$ is a torsion-free sheaf of rank $i$ on $\BP^1$ whose fiber at $\infty\in \BP^1$ is fixed beforehand. The degrees of the sheaves $\CF_i$ are predetermined by the index $\gamma$, which lies in a monoid $Q^-$ in the root lattice of $\fsl_n$ (see Section~\ref{Quasiflag Spaces} for the exact definitions). The maximal torus $T\subset SL_n$ acts on each $\CM_{\gamma}$ by linear transformations of the ambient sheaf $\CO_{\BP^1}^n$, while $\BC^*$ acts on $\CM_{\gamma}$ by multiplying the base $\BP^1$ of the sheaves. The resulting $T\times \BC^*$ action on $\CM_\gamma$ will give us equivariant cohomology classes. \\

We will study the generating function
\begin{equation}\label{generating function}
Z(m)=\sum_{\gamma \in Q^-} e^{\gamma}\int_{\CM_{\gamma}} c(\CT\CM_{\gamma},mx)
\end{equation}
In the above, $c(\CT\CM_{\gamma},\cdot)$ denotes the $T\times \BC^*-$equivariant Chern polynomial of the tangent bundle of $\CM_\gamma$, $m\in \BC$ is a parameter and $x$ is the standard coordinate on $\BC$. This generating function was first introduced by Nekrasov in ~\cite{N1}, in the more complicated setting when $\fsl_n$ is replaced by its affine counterpart $\widehat{\fsl_n}$. \\

In our $\fsl_n$ situation, Braverman (\cite{B2}) has conjectured that $Z(m)$ is very closely related to the \emph{quantum trigonometric Calogero-Sutherland integrable system}. In fact, in ~\cite{B1} he proves the $m\rightarrow \infty$ limit case of this conjecture. Explicitly, Braverman considers the generating function
\begin{equation} \label{function Z}
Z=\sum_{\gamma\in Q^-} e^{\gamma}\int_{\CM_{\gamma}} 1
\end{equation}
and shows that $Z$ is the eigenfunction of the \emph{quantum
Toda hamiltonian} (up to a constant factor). We will prove Braverman's conjecture for general $m$, namely that $Z(m)$ equals the eigenfunction of the quantum trigonometric Calogero-Sutherland hamiltonian, up to a factor which we will specify. \\

The function $Z(m)$ is expected to be closely related to the equivariant $J-$function of the quantum cohomology of the cotangent bundle of the complete flag variety of $\BC^n$. The affine analogue of $Z(m)$ appears in $\CN=2$ super-symmetric $4-$dimensional gauge theory with adjoint matter, and is conjecturally very closely related to the \emph{quantum elliptic Calogero-Moser integrable system}. Both of these directions are described in ~\cite{B2}. \\

Andrei Okounkov has suggested that one can study $Z(m)$ by modeling it
as the character of a certain geometric operator $A(m)$. In this paper, we follow this
idea by relating $A(m)$ to $\fsl_n-$intertwiners. Then we use the theory of
generalized characters developed by Etingof, Frenkel and Kirillov to compute $Z(m)$. In this way we obtain:
\begin{equation} \label{main result}
Z(m)=Y_{\frac ax,m}\cdot e^{-\frac {a}{x}}\cdot \left(\prod_{\alpha\in R^+}
\frac 1{1-e^{-\alpha}}\right)^{m+1}
\end{equation}
where $a,x$ are standard coordinates on $T$ and $\BC^*$ (respectively). In the above, $Y_{\frac ax,m}$ is the eigenfunction of the Calogero-Sutherland
hamiltonian with eigenvalue $\frac {(a,a)}{x^2}$ and highest term $e^{\frac ax}$. \\

Let us say a few words about the structure of the paper: in Section~\ref{not
and basic} we introduce certain notations and concepts pertaining to Lie algebras, Verma modules, differential operators
and quiver representations. In Section~\ref{equiv cohomology} we give a short presentation of equivariant
cohomology with respect to the action of a compact Lie group. In Section~\ref{Quasiflag
Spaces} we consider the moduli spaces $\CM_{\gamma}$ in detail and establish some of
their properties. In Section~\ref{the vector bundle E}, we introduce a very important vector bundle $E$ and give several interpretations of it. In Section~\ref{the operators A(m)} we use the bundle $E$ to construct the operator $A(m)$ and relate its character to $Z(m)$. In Section~\ref{traces} we use the theory of generalized characters of intertwiners (see, for example, ~\cite{EK}) to compute the character of
$A(m)$. We prove relation \eqref{main result} in Theorem~\ref{final computation character}. Finally, as a corollary, we take the limit as $m\rightarrow \infty$ and obtain the finite-dimensional statement of Corollary 3.7 of ~\cite{B1}. \\

I would first and foremost like to thank Andrei Okounkov, my thesis advisor for this paper, for suggesting this wonderful problem and for all his patient help and advice along the way. Many of the ideas in this paper either belong to him or were inspired by his perspective. I would also like to thank Alexander Braverman and Michael Finkelberg for numerous discussions and explanations, many of which developed even beyond the scope of this paper. My gratitude also goes to Valerio Toledano Laredo for helping me understand some aspects of Ringel-Hall algebras, and to Rahul Pandharipande for suggesting solutions to some issues in Section~\ref{the vector bundle E}. \\

\section{Basic Definitions}
\label{not and basic}

\subsection{}
\label{lie algebra}

In this section we will describe some aspects of the representation theory of the simple Lie algebra $\fsl_n$. Fix a basis $w_1,...,w_n$ of $\BC^n$. To this choice of basis, there corresponds a Cartan subalgebra $\fh\subset \fsl_n$ and a root system $R\subset \fh^*$. Corresponding to $R$ we have the standard decomposition
$$
\fsl_n=\fn^-\oplus \fh \oplus \fn^+
$$
Explicitly, $\fsl_n$ consists of traceless $n\times n$ complex matrices, $\fh$ consists of traceless diagonal matrices and $\fn^-$/$\fn^+$ consists of strictly lower/upper triangular $n\times n$ matrices (with respect to the basis $w_1,...,w_n$). \\

The root system $R$ consists of vectors $w_i-w_j$ ($1\leq i\neq j\leq n$). If $i<j$ the corresponding root is called positive, while if $i>j$ it is called negative. The set of positive roots is denoted by $R^+$, while the set of negative roots is denoted by $R^-$. The half-sum of the positive roots will be denoted by
$$
\rho=\frac 12\sum_{\alpha \in R^+} \alpha
$$
Further, $\alpha_i=w_i-w_{i+1}\in R^+$ will be called the simple roots. Inside $\fh^*$ we have the root lattice
$$
Q=\bigoplus_{i=1}^{n-1} \BZ \alpha_i
$$
Letting $\BZ_-=\{0,-1,-2,...\}$, the lattice $Q$ contains the monoid
$$
Q^-=\bigoplus_{i=1}^{n-1} \BZ_- \alpha_i
$$
One can impose a partial order on weights $\lambda\in \fh^*$ by setting $\lambda\leq \mu$ if $\lambda-\mu\in Q^-$. \\

The vector spaces $\fn^-, \fh, \fn^+$ have standard bases
$$
\fn^-=\langle f_{\alpha}, \alpha\in R^+\rangle
$$
$$
\fh=\langle h_1,...,h_{n-1}\rangle
$$
$$
\fn^+=\langle e_{\alpha}, \alpha\in R^+\rangle
$$
Explicitly, if $\alpha=w_i-w_j$ with $i<j$, then $f_{\alpha}$ is the matrix with 1 at the intersection of row $j$ and column $i$ and 0 everywhere else, while $e_{\alpha}$ is the matrix with 1 at the intersection of row $i$ and column $j$ and 0 everywhere else. When $\alpha=w_i-w_{i+1}$ is a simple root, we write $f_i=f_{\alpha_i}$ and $e_i=e_{\alpha_i}$. Note that the $f_i$'s (respectively, the $e_i$'s) generate $\fn^-$ (respectively, $\fn^+$) as Lie algebras. Finally, one defines
$$
h_i=[e_i,f_i].
$$
\textrm{ }

\subsection{}
\label{def Verma}

Given $\beta\in \fh^*$ and a set of linearly independent vectors
$\beta_1,...,\beta_t\in \fh^*$, we define a \emph{cone} to be the set
\begin{equation}
C=\{\lambda|\lambda=\beta+k_1\beta_1+...+k_t\beta_t, \textrm{ } k_1,...,k_t\in
\BN_0\}\subset \fh^*
\end{equation}
An example of a cone is $Q^-$ defined in the previous subsection. An $\fsl_n-$module $V$ is said to posses a \emph{conic weight
decomposition} if for some cone $C\subset \fh^*$, one has
\begin{equation} \label{weight decomposition}
V=\bigoplus_{\lambda \in C} V[\lambda]
\end{equation}
where each $V[\lambda]$ is a finite-dimensional subspace such that Cartan elements $h\in \fh$ act on it as multiplication by $\langle h,\lambda \rangle$.  The most important example of such a module is the Verma module.

\begin{definition} We define the Verma module $M(\lambda_0)$ with highest weight $\lambda_0$ to be the $\fsl_n-$module freely generated over $U\fn^-$ by a single vector $v$ under the relations
$$e\cdot v=0, \textrm{ }\forall e \in \fn^+ $$
$$h\cdot v=\langle h,\lambda_0\rangle v, \textrm{ }\forall h\in \fh$$
\end{definition}
The representation $M(\lambda_0)$ is irreducible for generic
$\lambda_0$, and it has a conic weight decomposition with highest
weight $\lambda_0$:
$$
M(\lambda_0)=\bigoplus_{\lambda\in \lambda_0+Q^-} M(\lambda_0)[\lambda].
$$
\textrm{ }

\subsection{}
\label{def generalized character}

Consider two $\fsl_n-$modules $U$ and $V$, where $V$ has a conic weight
decomposition as in \eqref{weight decomposition}. Given a linear
operator $\Phi:V\rightarrow V\otimes U$, we define its character as
the formal $U-$valued expression
\begin{equation} \label{definition character}
\chi_{\Phi}=\sum_{\lambda \in C} e^{\lambda}\cdot
\textrm{Tr}(\Phi|_{V[\lambda]})
\end{equation}
Here the $e^{\lambda}$ should be perceived as formal commuting symbols (with the standard relation $e^{\lambda_1+\lambda_2}=e^{\lambda_1}e^{\lambda_2}$). The operator $\Phi|_{V[\lambda]}$ is defined as the restriction of $\Phi$ to the finite-dimensional factor $V[\lambda]$, followed by projection onto $V[\lambda]\otimes U$. \\

As noted in ~\cite{EK}, the character can also be perceived as the equivariant function on the Lie group $SL_n$ given by
$$
\chi_{\Phi}(g)=\textrm{Tr}(\Phi\cdot g)
$$
Here, equivariant means that for $x,g\in SL_n$ we have
\begin{equation} \label{equiv}
\chi_{\Phi}(xgx^{-1})=x\cdot \chi_{\Phi}(g)
\end{equation}
The equivariance implies that $\chi$ is determined by its values on elements $e^h$ of the maximal torus. The two descriptions of $\chi_{\Phi}$ presented above are related by the fact that
$$
\chi_{\Phi}(e^h)=\sum_{\lambda \in C} e^{\langle h, \lambda
\rangle}\cdot \textrm{Tr}(\Phi|_{V[\lambda]})
$$
The above sum converges for $h$ in a certain cone in $\fh$. \\

\subsection{}
\label{differential operators}

The character defined in \eqref{definition character} is a particular example of a \emph{power series}. We define a power series to be an expression of the form
\begin{equation}\label{power series}
\chi=\sum_{\lambda\in C} e^{\lambda}c_{\lambda}
\end{equation}
where $C\subset \fh^*$ is a cone, and $c_{\lambda}$ are coefficients belonging
to some representation $U$ of $\fsl_n$. A particularly important power series
is the \emph{Weyl denominator}
\begin{equation} \label{Weyl denominator}
\delta=\prod_{\alpha \in R^+} (e^{\alpha/2}-e^{-\alpha/2})=e^{\rho}\prod_{\alpha \in R^+} (1-e^{-\alpha})
\end{equation}
and its inverse
$$
\delta^{-1}=e^{-\rho}\prod_{\alpha \in R^+} \frac 1{1-e^{-\alpha}}
$$
Both $\delta$ and $\delta^{-1}$ are power series with $\pm \rho+Q^-$ as the associated cones. In general, two types of objects can act on power series $\chi$: elements of $\fsl_n$ can act on the coefficients $c_{\lambda}$, and \emph{differential operators} can act on the symbols $e^{\lambda}$. Examples of such differential operators are the partial derivative in the direction of $\alpha \in \fh^*$:
\begin{equation}
\partial_{\alpha}(e^{\lambda})=(\alpha, \lambda)e^{\lambda}
\end{equation}
and the Laplace operator:
\begin{equation}
\Delta_{\fh}(e^{\lambda})=(\lambda, \lambda)e^{\lambda}.
\end{equation} \\

The main differential operator we
will be concerned with is the \emph{quantum trigonometric Calogero-Sutherland
hamiltonian} (\cite{EK}):
$$
L(m)=\Delta_{\fh}-2m(m+1)\sum_{\alpha\in R^+} \frac 1{(e^{\alpha/2}-e^{-\alpha/2})^2}
$$
Consider a power series
\begin{equation} \label{eigenfunction CS}
Y_{\lambda_0,m}=\sum_{\lambda \in \lambda_0+Q^-} e^{\lambda}c_{\lambda}
\end{equation}
normalized such that $c_{\lambda_0}=1$. The fact that $Y_{\lambda_0,m}$ is an eigenfunction of $L(m)$ with eigenvalue $s\in \BC^*$ is equivalent to the following relations on the coefficients $c_\lambda$:
\begin{equation}
c_{\lambda}\cdot ((\lambda, \lambda)-s)=2m(m+1)\sum_{\alpha\in
R^+}\sum_{j\geq 1} j \cdot c_{\lambda+j\alpha}
\end{equation}
The above expression for $\lambda=\lambda_0$ implies $s=(\lambda_0,\lambda_0)$. Furthermore, for generic $\lambda_0\in \fh^*$, the above relation recursively determines all coefficients $c_{\lambda}$ from $c_{\lambda_0}=1$. Thus there is a unique eigenfunction of $L(m)$ with highest term $e^{\lambda_0}$, and we will henceforth denote it by $Y_{\lambda_0,m}$. \\

\subsection{}
\label{quivers}

We need to touch on one more topic from representation theory, which is quite independent of what was discussed above. A \emph{quiver} $Q$ with vertex set $I$ and edge set $E$ is a directed graph. A representation $V$ of $Q$ will be a collection of vector spaces $V_i$ for every $i\in I$, together with linear maps $\phi_e:V_i\rightarrow V_j$ for every edge $e\in E$ between vertices $i$ and $j$. The dimension of a representation $V$ will be the vector of positive integers $\textrm{dim}(V)=(d_i, i\in I)$, where $d_i$ is the dimension of the vector space $V_i$. A map of two representations $V$ and $V'$ is a collection of maps between the vector spaces $V_i$ and $V_i'$ (as $i$ ranges over $I$) that commute with the edge maps. \\

A simple representation is one that has no non-trivial subrepresentations. An indecomposable representation is one which does not decompose non-trivially into a direct sum. Let $S$, $R$ and $\textrm{Rep}(Q)$ denote the sets of isomorphism classes of simple, indecomposable and all representations of the quiver $Q$, respectively. Any representation is a direct sum of indecomposables in $R$, and thus $\textrm{Rep}(Q)=\BN[R]$. \\

Define the \emph{Ringel-Hall algebra} $U_q(Q)$ as the algebra generated by symbols $e_{\kappa}$ for $\kappa\in \textrm{Rep}(Q)$, with multiplication given by the rule
$$
e_{\kappa'}\cdot e_{\kappa''}=q^{\langle \textrm{dim}(\kappa'),\textrm{dim}(\kappa'')\rangle}\sum_{\kappa\in \textrm{Rep}(Q)} P_{\kappa',\kappa''}^{\kappa}(q) e_{\kappa}
$$
where $\langle \cdot, \cdot \rangle$ is the Euler form of the quiver. The definition of $P_{\kappa',\kappa''}^{\kappa}$ is the following: suppose the ground field is $\BF_q$, the finite field with $q$ elements. Then take a representation $V$ from the class $\kappa$, and define $P_{\kappa',\kappa''}^{\kappa}(q)$ to be the number of subrepresentations $V' \subset V$ such that $V'$ is in the isomorphism class $\kappa'$ and $V/V'$ is in the isomorphism class $\kappa''$. The function $P_{\kappa',\kappa''}^{\kappa}(q)$ is a polynomial in $q$ (see ~\cite{R1}), so the definition of the multiplication law makes sense for $q$ an indeterminate. In particular, we can specialize the multiplication law at $q=1$ and obtain the algebra $U(Q):=U_1(Q)$. \\

In this paper, we will only be concerned with the quiver
$$
A_{n-1}:1\rightarrow 2\rightarrow...\rightarrow n-1
$$
with vertex set $I=\{1,2,...,n-1\}$. Its simple representations are denoted by $[i;1)$, where $i\in I$. By definition, $[i;1)$ is the representation with a one-dimensional vector space at the vertex $i$ and all edge maps 0. The indecomposable representations are denoted by $[i;l)$, where $i\in I$ and $1\leq l\leq n-i$. By definition, $[i;l)$ is the representation with a one-dimensional vector space at the vertices $i,i+1,...,i+l-1$ and maps
$$
...@>>>0@>>>V_i@>{\cong}>> V_{i+1}@>{\cong}>>...@>{\cong}>> V_{i+l-1} @>>> 0@>>>...
$$
One can explicitly compute the Ringel-Hall algebra $U(A_{n-1})$. If we denote $e_i:=e_{[i;1)}$, we have
$$
e_{[i;l)}=[e_{i+l-1}, [e_{i+l-2},[....[e_{i+1},e_i]...]]]
$$
where $[\cdot, \cdot]$ is the standard commutator. For a general isomorphism class $\kappa\in \textrm{Rep}(A_{n-1})$, we have
\begin{equation} \label{formula quiver}
\kappa=\dsp \bigoplus_{i=1}^{n-1} \bigoplus_{l=1}^{n-l} [i;l)^{\oplus k_{il}} \Rightarrow
e_{\kappa}=\prod_{i=1}^{n-1} \prod_{l=1}^{n-l} \frac {e_{[i;l)}^{k_{il}}}{k_{il}!}
\end{equation}
In particular, the above shows us that $U(A_{n-1})$ is generated by $e_1,...,e_{n-1}$. Let $E_{i,j}$ denote the matrix with 1 at the intersection of row $i$ and column $j$, and 0 everywhere else. Then we have the following (\cite{R1}):

\begin{theorem} \label{Ringel-Hall = enveloping}
The assignment $e_i\rightarrow E_{i+1,i}$ gives us an isomorphism
$$U(A_{n-1}) @>{\cong}>> U(\fn^-)$$
where $U(\fn^-)$ is the universal enveloping algebra of $\fn^-\subset \fsl_n$. \\
\end{theorem}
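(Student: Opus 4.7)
The plan is to construct an algebra homomorphism in the reverse direction $\psi: U(\fn^-) \to U(A_{n-1})$ sending $f_i = E_{i+1, i}$ to $e_i$, establish that $\psi$ is a bijection, and then invert it to obtain the isomorphism of the statement. This will use Serre's presentation of $U(\fn^-)$: it is the associative algebra generated by $f_1, \ldots, f_{n-1}$ subject to $[f_i, f_j] = 0$ for $|i - j| > 1$ and $[f_i, [f_i, f_j]] = 0$ for $|i - j| = 1$.

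The first step is to verify that the generators $e_1, \ldots, e_{n-1} \in U(A_{n-1})$ satisfy the same Serre relations, by direct computation of the Hall products at $q = 1$. When $|i - j| > 1$, the only representation of $A_{n-1}$ with one-dimensional vector spaces at vertices $i$ and $j$ and zero elsewhere is the decomposable $[i;1) \oplus [j;1)$; a brief enumeration of its subrepresentations isomorphic to $[i;1)$ and $[j;1)$ yields $e_i e_j = e_j e_i$. When $j = i + 1$, there are two such representations, namely $[i;2)$ and $[i;1) \oplus [i+1;1)$, and counting their subrepresentations produces $[e_{i+1}, e_i] = e_{[i;2)}$. A more involved enumeration, for representations with $\dim V_i = 2$ and $\dim V_{i+1} = 1$, establishes $[e_i, [e_i, e_{i+1}]] = 0$, and analogously for $j = i - 1$. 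By Serre's theorem, this produces a well-defined algebra homomorphism $\psi: U(\fn^-) \to U(A_{n-1})$ with $\psi(f_i) = e_i$.

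Surjectivity of $\psi$ is immediate from the remark following formula (2.5), which states that $U(A_{n-1})$ is generated by $e_1, \ldots, e_{n-1}$. For injectivity I will compare PBW-type bases. The PBW theorem yields a basis of $U(\fn^-)$ consisting of ordered monomials $\prod_{\alpha \in R^+} f_\alpha^{k_\alpha}/k_\alpha!$, with $f_{\alpha_i + \cdots + \alpha_{i+l-1}} = E_{i+l, i} = [f_{i+l-1}, [\ldots, [f_{i+1}, f_i] \ldots]]$ by a direct matrix calculation. On the other side, the symbols $\{e_\kappa\}_{\kappa \in \Rep(A_{n-1})}$ form a basis of $U(A_{n-1})$ by construction, and combining formula (2.5) with Gabriel's theorem (which identifies the indecomposables $[i;l)$ bijectively with positive roots of $\fsl_n$) rewrites this basis as $\prod e_{[i;l)}^{k_{il}}/k_{il}!$. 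The bracket identity $e_{[i;l)} = [e_{i+l-1}, [\ldots, [e_{i+1}, e_i] \ldots]]$ given in the excerpt then yields $\psi(f_{\alpha_i + \cdots + \alpha_{i+l-1}}) = e_{[i;l)}$, so $\psi$ carries the PBW basis of $U(\fn^-)$ bijectively to the natural basis of $U(A_{n-1})$. This forces $\psi$ to be an isomorphism, and its inverse is the map in the theorem.

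I expect the main obstacle to be the verification of the quadratic Serre relations $[e_i, [e_i, e_{i \pm 1}]] = 0$ in the Hall algebra: this requires a careful enumeration of the representations of $A_{n-1}$ of dimension $\dim V_i = 2$, $\dim V_{i \pm 1} = 1$ together with their subrepresentation lattices, and then a nontrivial cancellation between the two resulting Hall products. The calculation is standard but slightly delicate, and is carried out in detail in \cite{R1}.
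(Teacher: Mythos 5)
The paper gives no proof of this theorem: it is quoted directly from Ringel \cite{R1}, as the sentence immediately preceding it indicates. Your outline reconstructs the standard argument underlying that citation — verify the Serre relations for the $e_i$ in the Hall algebra at $q=1$, invoke Serre's presentation of $U(\fn^-)$ to get a homomorphism $\psi$, then match the PBW basis of $U(\fn^-)$ against the tautological basis $\{e_\kappa\}$ of the Hall algebra via Gabriel's identification of indecomposables with positive roots — so there is no genuine divergence to compare; you have simply filled in the proof the paper chose to cite rather than reproduce.

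Two small remarks on your sketch. First, you correctly single out the cubic relations $[e_i,[e_i,e_{i\pm 1}]]=0$ as the computational crux; the verification does go through (at $q=1$ one finds $[e_{i+1},e_i]=e_{[i;2)}$ and then $e_i\cdot e_{[i;2)}=e_{[i;2)}\cdot e_i=e_{[i;1)\oplus[i;2)}$, since the Hall number $q$ appearing in one of the products specializes to $1$), but it is worth noting that the cancellation genuinely uses $q=1$ — this is exactly the point where the classical enveloping algebra, as opposed to a quantized version, emerges. Second, your injectivity argument quietly uses that the product order appearing in formula \eqref{formula quiver} is an admissible PBW order for $U(\fn^-)$; this is automatic (the PBW theorem works for any total order on $R^+$, and \eqref{formula quiver} fixes one), but it is the kind of detail worth stating when comparing the two bases. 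Neither point is a gap, just places where a fully written-out proof would need a line or two more.
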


\section{Equivariant Cohomology}
\label{equiv cohomology}

\subsection{} Our presentation of equivariant cohomology will follow ~\cite{AB}. Let us consider a Lie group $G$, let $BG$ be the classifying space of $G$ and $\pi:EG\rightarrow BG$ the universal $G-$bundle. Given a smooth variety $X$ with a $G-$action, define the space
$$
EG\times_G X=EG\times X / \{(pg,x)\sim (p,gx), p\in EG, x\in X, g\in G\}
$$
The $G-$equivariant cohomology of $X$ is defined as
$$
H^*_G(X):=H^*(EG\times_G X)
$$
When the group $G$ is clear from context, we will simply call this construction \emph{equivariant cohomology}. \\

\subsection{ }
\label{properties of equivariant cohomology}

Equivariant cohomology satisfies the following properties:
\begin{itemize}
  \item \emph{existence of pull-backs: } if $f:X\rightarrow Y$ is a
  $G-$equivariant map, then there exists a pull-back homomorphism $f^*:H^*_G(Y)\rightarrow H^*_G(X)$.
  \item \emph{module structure: } if we consider $\pi:X\rightarrow pt$, then $\pi^*:H^*_G(pt)\rightarrow H^*_G(X)$ endows $H^*_G(X)$ with a structure of a $H^*_G(pt)-$module.
  \item \emph{cohomology of the point: } $H^*_G(pt)=\BC[x_1,...,x_n]$, where $x_1,...,x_n$ are coordinates of the Lie algebra of a maximal torus of $G$.
\end{itemize}
Moreover, if $f:X\rightarrow Y$ is a proper $G-$equivariant map, we can define push-forward maps $f_*:H^*_G(X)\rightarrow H^*_G(Y)$ (which are homomorphisms of $H^*_G(pt)-$modules) with the following properties:
\begin{itemize}
  \item \emph{projection formula: }
  \begin{equation} \label{equiv cohom 1}
  f_*(c\cdot f^*d)=f_*c\cdot d
  \end{equation}
  \item \emph{base change: } Suppose we have a fiber square
$$
\begin{CD}
Y\times_T Z @>{p}>> Y \\
@V{q}VV @V{r}VV \\
Z @>{s}>> T
\end{CD}
$$
such that the push-forward maps $r_*$ and $q_*$ are defined. If this happens, then we have the equality
\begin{equation} \label{equiv cohom 2}
s^*r_*=q_*p^*
\end{equation}
  \item \emph{push-forward of inclusions: } If $i:X\hookrightarrow Y$ is an inclusion, then $i_*1=[X]$, where $[X]$ is the Poincare dual of the class of the subvariety $X$ in $Y$. This and \eqref{equiv cohom 1} imply that
  \begin{equation} \label{equiv cohom 3}
  i_*i^*c=c\cdot [X]
  \end{equation}
for any $c\in H_G^*(Y)$. Moreover, for any $c\in H_G^*(X)$, ~\cite{AB} tells us that
  \begin{equation} \label{equiv cohom 4}
  i^*i_*c=c\cdot e(N_{Y|X})
  \end{equation}
  In the above, $e(N_{Y|X})$ denotes the top Chern class of the normal bundle of $X$ in $Y$.

\end{itemize}
When $\pi:X\rightarrow pt$ is the projection to a point, then the push-forward is just the integral:
$$
\pi_*\alpha=\int_X\alpha
$$\\

\subsection{}
\label{fixed points cohom}

We will be interested in the case when $G=T=(\BC^*)^k$ is a torus,
and the fixed locus $X^T$ of the $T-$action on $X$ is a finite union
of fixed points. In that case, we have the following
\emph{localization theorem}:

\begin{theorem} \label{locaization thm}
The restriction map
$$
H:=H^*_T(X)\bigotimes_{H^*_T(pt)} \emph{Frac}(H^*_T(pt)) @>{\cong}>> H^*_T(X^T)\bigotimes_{H^*_T(pt)} \emph{Frac}(H^*_T(pt))
$$
is an isomorphism. For $p\in X^T$, let $i_p:pt\rightarrow X$ denote the inclusion of $p$ in $X$. Then the classes
$$
[p]:={i_p}_*1
$$
form a basis of $H$ as a vector space over $\emph{Frac}(H^*_T(pt))$.
\end{theorem}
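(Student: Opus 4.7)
The plan is to proceed in two stages: first show the localized restriction map is an isomorphism, then use the formal properties from Section~\ref{properties of equivariant cohomology} to read off the basis.

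The key external input for the first stage is that if $T$ acts on a reasonable space $Y$ with $Y^T=\emptyset$, then $H^*_T(Y)$ is a torsion module over $H^*_T(pt)$. I would prove this by stratifying $Y$ into finitely many $T$-invariant locally closed pieces on which the action factors through a proper subtorus $T'\subsetneq T$; on each such piece the equivariant cohomology is annihilated by any character of $T$ that is nontrivial on $T/T'$, and a Mayer--Vietoris argument assembles this into the global statement. Granted this, let $U=X\setminus X^T$ and $j:X^T\hookrightarrow X$. The Gysin long exact sequence
\[
\cdots\to H^{*-2\dim X}_T(X^T)\stackrel{j_*}{\to} H^*_T(X)\to H^*_T(U)\to H^{*-2\dim X+1}_T(X^T)\to\cdots
\]
has both $H^*_T(U)$ terms killed after tensoring with $\emph{Frac}(H^*_T(pt))$, so $j_*$ becomes an isomorphism of localized modules. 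Combined with \eqref{equiv cohom 4}, which gives $j^*j_*=e(N_{X|X^T})$---a unit in $\emph{Frac}(H^*_T(pt))$ since the $T$-action on each tangent space $T_pX$ has no zero weights---this forces $j^*$ to be an isomorphism of localized modules as well.

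For the second stage, pick fixed points $p,q\in X^T$. When $p\neq q$ the fiber product $\{q\}\times_X\{p\}$ is empty, so the base-change identity \eqref{equiv cohom 2} gives $i_q^*[p]=i_q^*(i_p)_*1=0$. When $p=q$ the self-intersection formula \eqref{equiv cohom 4} gives $i_p^*[p]=e(N_{X|p})=e(T_pX)$, the product of the $T$-weights on the tangent space at $p$. Each such weight is a nonzero element of $H^*_T(pt)=\BC[x_1,\ldots,x_n]$ because $p$ is an isolated fixed point of a smooth $T$-variety, hence $e(T_pX)$ is a unit in $\emph{Frac}(H^*_T(pt))$. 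The matrix $(i_q^*[p])_{p,q\in X^T}$ is therefore diagonal with invertible entries; transporting along the isomorphism of the first stage, $\{[p]\}_{p\in X^T}$ is a basis of $H$ over $\emph{Frac}(H^*_T(pt))$.

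The main obstacle is the torsion lemma for $H^*_T(U)$: producing the required $T$-invariant stratification in general takes some geometric input (e.g.\ a slice theorem, or in the good cases an attracting-repelling decomposition of the torus action). In the applications of this paper the $\CM_\gamma$ are quasi-projective with explicit torus actions and isolated fixed points, so the required structure is immediate. Once the torsion lemma is in hand, the rest of the argument is purely formal manipulation of the push-forward and pull-back identities from Section~\ref{properties of equivariant cohomology}.
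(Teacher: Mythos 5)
The paper does not prove this statement; it is cited as background from Atiyah--Bott \cite{AB}, with Section~\ref{fixed points cohom} simply recording the theorem and the downstream consequences \eqref{restriction to a point} and Theorem~\ref{localization formula}. There is therefore no in-paper proof to compare against, and your proposal should be judged on its own.

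Your argument is the standard Atiyah--Bott proof and is sound as a sketch. You correctly identify the crux as the torsion lemma for $H^*_T(U)$ when $U^T=\emptyset$, outline a reasonable way to prove it by stratifying into $T$-invariant pieces on which the action factors through a proper subtorus, feed this into the Gysin sequence of the pair $(X,X^T)$ to conclude that $j_*$ becomes an isomorphism after tensoring with $\emph{Frac}(H^*_T(pt))$, and then upgrade this to $j^*$ by noting $j^*j_*=e(N_{X|X^T})$ is a unit in $\emph{Frac}(H^*_T(pt))$ (because an isolated fixed point on a smooth $T$-variety has no zero weight on its tangent space). The base-change and self-intersection computations giving the diagonal matrix $(i_q^*[p])$ then deliver the basis claim. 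Two minor points worth tightening: the Gysin degree shift should be by twice the codimension of $X^T$, which on a disconnected $X$ such as $\CM$ varies from component to component; and it is cleanest to note that $j_*(1_p)=[p]$ since $j$ restricted to $\{p\}$ is $i_p$, so the $[p]$ are literally the images of the evident basis of $H^*_T(X^T)\otimes\emph{Frac}(H^*_T(pt))$ under the isomorphism $j_*$, which makes the final sentence immediate.
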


This basis is very convenient, because $i_p^*[q]=0$ for $p\neq q$. Moreover, by \eqref{equiv cohom 4} we have
\begin{equation} \label{restriction to a point}
i_p^*[p]=e(\CT_p X) =\prod_{w\in \CT_p X} w
\end{equation}
where $w\in \CT_pX$ are the weights of the $T-$action in
the tangent space at $p$. This means that if we want to compute a
certain class $c\in H^*_T(X)$ in terms of the basis vectors $[p]$, all we
have to do is to compute its restrictions to the fixed points. In other
words, we have the following \emph{localization formula}:

\begin{theorem} \label{localization formula}
For any $c\in H^*_T(X)$ we have
$$
c=\sum_{p\in X^T} [p]\cdot \frac {i_p^*c}{\dsp \prod_{w\in \CT_p X} w}
$$
\end{theorem}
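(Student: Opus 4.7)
The plan is to invoke Theorem~\ref{locaization thm} to write any class $c\in H$ uniquely in the basis $\{[p]\}_{p\in X^T}$ as $c = \sum_{p\in X^T} c_p [p]$ with coefficients $c_p \in \emph{Frac}(H^*_T(pt))$, and then to pin the $c_p$ down by pulling back along the inclusions $i_q: pt\hookrightarrow X$ of each fixed point $q\in X^T$.

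For this strategy to isolate a single term, I first need the vanishing of $i_q^*[p]$ whenever $p\neq q$. This will follow from the base change formula~\eqref{equiv cohom 2} applied to the fibre square
$$
\begin{CD}
\emptyset @>>> pt \\
@VVV @V{i_p}VV \\
pt @>{i_q}>> X
\end{CD}
$$
whose upper-left corner is empty precisely because $p\neq q$; thus $i_q^*[p] = i_q^*(i_p)_*1$ factors through the cohomology of the empty set and is therefore zero. For the surviving diagonal term, formula~\eqref{equiv cohom 4} together with the fact that the normal bundle at a point coincides with the tangent space gives $i_p^*[p] = e(\CT_p X) = \prod_{w\in \CT_p X} w$, as already recorded in~\eqref{restriction to a point}. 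This product is non-zero in $\emph{Frac}(H^*_T(pt))$, since $p$ being an isolated fixed point means precisely that $T$ acts on $\CT_p X$ with only non-trivial weights, so every factor $w$ is a non-zero element of $H^*_T(pt)=\BC[x_1,\dots,x_n]$.

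Applying $i_q^*$ to the expansion $c = \sum_p c_p [p]$ therefore yields $i_q^* c = c_q \cdot \prod_{w\in \CT_q X} w$, from which $c_q = i_q^* c / \prod_{w\in \CT_q X} w$. Substituting back gives the formula in the theorem. The argument is entirely routine given Theorem~\ref{locaization thm}; the only step that requires even a moment's thought is the vanishing $i_q^*[p]=0$ for $p\neq q$, which is handled above via base change on the empty fibre product.
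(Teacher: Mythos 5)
Your proof is correct and follows essentially the same route as the paper: expand $c$ in the fixed-point basis guaranteed by Theorem~\ref{locaization thm}, then isolate each coefficient by pulling back along $i_q$ and using $i_q^*[p]=0$ for $p\neq q$ together with~\eqref{restriction to a point}. The paper simply asserts the off-diagonal vanishing $i_q^*[p]=0$ and the invertibility of the Euler class, whereas you supply the base-change-on-empty-fibre argument and the observation that an isolated fixed point has only non-trivial tangent weights; these are exactly the right justifications for the steps the paper leaves implicit.
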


Applying $\pi_*$ to the above formula, where $\pi:X\rightarrow pt$,
gives us the \emph{integral formula}:

\begin{corollary} \label{integral formula}
For any $c\in H^*_T(X)$ we have
$$
\int_X c=\pi_*c=\sum_{p\in X^T} \frac {i_p^*c}{\dsp \prod_{w\in \CT_p X} w}
$$
\end{corollary}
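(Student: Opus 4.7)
The plan is to apply the pushforward $\pi_*$, where $\pi: X \to pt$, to both sides of the localization formula from Theorem~\ref{localization formula}. The left-hand side becomes $\pi_* c = \int_X c$ by the definition recorded at the end of Section~\ref{properties of equivariant cohomology}, so only the right-hand side requires work. Each summand in the localization formula has the form $[p] \cdot \alpha_p$, where the coefficient $\alpha_p := i_p^* c / \prod_{w \in \CT_p X} w$ lies in the fraction field of $H^*_T(pt)$. Since $\pi_*$ is a homomorphism of $H^*_T(pt)$-modules and extends linearly over this fraction field to the localization, the scalar $\alpha_p$ commutes with $\pi_*$, which reduces the problem to computing $\pi_*[p]$ for each fixed point $p$.

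By the defining formula $[p] = (i_p)_* 1$ together with functoriality of pushforward for proper maps, we obtain $\pi_*[p] = \pi_* (i_p)_* 1 = (\pi \circ i_p)_* 1$. The composition $\pi \circ i_p: pt \to pt$ is the identity map on a point, whose pushforward acts as the identity on $H^*_T(pt)$, so $\pi_*[p] = 1$. Summing over fixed points then yields the asserted integral formula.

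The main subtlety --- rather than any genuine obstacle --- is keeping track of the coefficient ring: the identity a priori lives in the fraction field of $H^*_T(pt)$, and one should note that each $\alpha_p$ is well-defined there because every weight $w \in \CT_p X$ is a nonzero element of $H^*_T(pt)$ under the standing assumption that $X^T$ is a finite set of isolated points. Once this is observed, the corollary is an immediate application of $\pi_*$ to Theorem~\ref{localization formula}.
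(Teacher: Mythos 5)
Your proof is correct and follows the paper's approach exactly: the paper derives the corollary by applying $\pi_*$ to the localization formula of Theorem~\ref{localization formula}, which is precisely what you do. The extra details you supply ($\pi_*[p] = (\pi\circ i_p)_*1 = 1$ by functoriality, and the remark about working over the fraction field) are correct elaborations of the same one-line argument.
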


We have already noted above the importance of the top Chern class $e$. In
fact, we also have the equivariant version of
Proposition 12.8 in ~\cite{BT}:

\begin{proposition} \label{class of the section}
Suppose $V$ is a $T-$vector bundle on $X$ which possesses a regular section with zero locus $Z\subset X$. If the decomposition of $Z$ into irreducible components is $Z=\cup_i Z_i$, then
$$
e(V)=\sum_i [Z_i].
$$ \\
\end{proposition}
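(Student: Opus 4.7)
The plan is to derive this from the non-equivariant statement (Proposition 12.8 of \cite{BT}) by passing to the Borel construction. Write $\widetilde{X} = ET \times_T X$ and $\widetilde{V} = ET \times_T V$, so that $\widetilde{V}$ is an ordinary vector bundle over $\widetilde{X}$. The $T$-equivariant section $s$ of $V$ descends to an honest section $\widetilde{s}$ of $\widetilde{V}$, whose zero locus is $ET \times_T Z = \bigcup_i (ET \times_T Z_i)$.

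I would then carry out three steps. First, verify that $\widetilde{s}$ remains a regular section: since the projection $ET \times X \to ET \times_T X$ is a locally trivial $T$-torsor, transverse vanishing is a local property and is inherited from $s$. Second, apply the non-equivariant Proposition 12.8 of \cite{BT} on $\widetilde{X}$ to obtain
$$
e(\widetilde{V}) = \sum_i [ET \times_T Z_i]
$$
in $H^*(\widetilde{X})$. Third, invoke the tautological identifications $H^*(\widetilde{X}) = H^*_T(X)$, $e(\widetilde{V}) = e(V)$, and $[ET \times_T Z_i] = [Z_i]$ to read this off as the desired equivariant identity.

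The main obstacle is technical: $ET$ is infinite-dimensional, so invoking Bott-Tu directly on $\widetilde{X}$ requires justification. The standard fix is to replace $ET$ by a tower of smooth finite-dimensional approximations $E_N T$ (for $T = (\BC^*)^k$ one may use products of projective spaces of sufficiently large dimension), apply the proposition on each $E_N T \times_T X$, and observe that in any fixed cohomological degree the resulting identity stabilizes once $N$ is large enough. This stabilization is exactly what legitimizes the definition of equivariant cohomology recalled in Section~\ref{equiv cohomology} and ensures that $e(V)$ and $[Z_i]$ are well-defined classes in $H^*_T(X)$. An alternative, purely local argument is available via Theorem~\ref{locaization thm} when $X^T$ is finite: one restricts both sides to each fixed point, computes $i_p^*e(V)$ using \eqref{equiv cohom 4}, and compares with $i_p^*[Z_i]$ using the transversal local model of $s$ near $p$; this avoids the Borel construction entirely but is less conceptually transparent.
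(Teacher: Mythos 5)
The paper does not actually prove this proposition: it simply asserts it as ``the equivariant version of Proposition 12.8 in \cite{BT}'' and moves on, taking the reduction to the non-equivariant statement for granted. Your argument supplies exactly the justification that the paper leaves implicit, and the route you take — pass to the Borel construction via finite-dimensional approximations $E_NT$, check that regularity of the section is a local condition and therefore descends, apply the non-equivariant Bott--Tu identity on each $E_NT\times_T X$, and read the stabilized result back as an equation in $H^*_T(X)$ — is the standard and correct one.

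One point you should spell out: to identify the irreducible decomposition of $E_NT\times_T Z$ as $\bigcup_i \bigl(E_NT\times_T Z_i\bigr)$, you need each $Z_i$ to be $T$-invariant, so that $E_NT\times_T Z_i$ is even defined and so that it is irreducible (being then a locally trivial fiber bundle over the irreducible base $B_NT$ with irreducible fiber $Z_i$). This is automatic here because $T$ is connected: a connected group acting on $Z$ permutes the finitely many irreducible components, and a connected group acting on a finite set acts trivially, so each $Z_i$ is preserved. With that remark your proof is complete. Your alternative localization argument is also viable in the setting the paper actually uses — the spaces $\CM_\gamma$ have finite $T\times\BC^*$-fixed loci — but it proves less (it only gives the identity after inverting elements of $H^*_T(\mathrm{pt})$) and, as you note, is less transparent; the Borel-construction route is the better one to record.
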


\section{Laumon Quasiflag Spaces}
\label{Quasiflag Spaces}

\subsection{}
\label{Def quasiflags}

Recall that we have chosen a basis $(w_1,...,w_n)$ of $\BC^n$. Let $T\subset SL_n$ be the maximal torus of matrices which are diagonal in this basis. Let $\ft$ be the Lie algebra of $T$, and let $\fh=\ft^*$ be its dual. Whenever we will mention roots and weights from now on, we will always refer to the roots and weights of $\fh$ (as in Section~\ref{not and basic}). For any $\gamma=-d_1\alpha_1-...-d_{n-1}\alpha_{n-1}\in Q^-$, let $\CM_{\gamma}$ denote the moduli space of Laumon quasiflags
\begin{equation} \label{flag of sheaves}
\CF:0\subset \CF_1\subset...\subset \CF_{n-1}\subset \CO^n
\end{equation}
In the above, $\CO$ stands for $\CO_{\BP^1}$, and each $\CF_i$ is a torsion-free sheaf on $\BP^1$ of rank
$i$ and degree $-d_i$, such that $\CF_i|_\infty=\textrm{span}(w_1,...,w_i)$. Fixing the behavior of our flags at
$\infty$ is called \emph{framing}. As in ~\cite{BF}, the spaces $\CM_{\gamma}$ are smooth and of
dimension $2(d_1+...+d_{n-1})$. We can define the disjoint union:
\begin{equation}
\CM=\bigsqcup_{\gamma\in Q^-} \CM_{\gamma}.
\end{equation}

\subsection{} The group $T\times \BC^*$ acts on $\CM_{\gamma}$ in the following way. The torus $T$ acts
on the ambient sheaf $\CO^n$ by changes of basis and $\BC^*$ acts on the sheaves by multiplying the base $\BP^1$. The fixed points of $\CM_{\gamma}$ under this torus action are precisely flags which have
\begin{equation} \label{fixed flag}
\CF_i=w_1\cdot \CO(-d_i^1)\oplus...\oplus w_i\cdot \CO(-d_i^i)
\end{equation}
where $\CO(-1)$ will always denote $\CO_{\BP^1}(-1\cdot 0_{\BP^1})$. Thus a fixed flag is determined by a
vector of non-negative integers
\begin{equation}
\label{fixed flag vector of integers}
d=(d_j^i), \textrm{ }\textrm{ }1\leq i\leq j\leq n-1
\end{equation}
satisfying
\begin{equation} \label{flag existence condition}
d_j^i\geq d_{j+1}^i, \textrm{
}\textrm{ }d_j^1+d_j^2+...+d_j^j=d_j, \textrm{ }\textrm{ }\forall \textrm{ }1\leq i\leq j\leq n-1
\end{equation}
We will always write $d_n^i=0$ by convention. Since the vector $d$ completely determines the fixed flag \eqref{fixed flag}, we will often abuse notation and write $d$ for the flag as well. \\

\subsection{}
\label{equiv cohom quasiflags}

Let $a_1,...,a_n$ be standard coordinates on $\BC^n$, and $x$ be the standard coordinate on $\BC$. Then $a_1-a_2,...,a_{n-1}-a_n,x$ will be coordinates on $\textrm{Lie}(T\times \BC^*)=\ft\oplus \BC$. As mentioned in Section~\ref{properties of equivariant cohomology}, in this case we have
\begin{equation}
H^*_{T\times \BC^*}(pt)=\BC[a_1-a_2,...,a_{n-1}-a_n,x]
\end{equation}
Consider
\begin{equation} \label{def of H}
H:=\bigoplus_{\gamma \in Q^-} H^*_{T\times \BC^*}(\CM_\gamma) \bigotimes_{H^*_{T\times \BC^*}(pt)} \textrm{Frac}(H^*_{T\times \BC^*}(pt))
\end{equation}
By the description of the torus fixed points in \eqref{fixed flag}, one sees that each $\CM_{\gamma}$ contains finitely many fixed points. Therefore the localization Theorem~\ref{locaization thm} states that $H$ is a vector space over $\textrm{Frac}(H^*_{T\times \BC^*}(pt))$, with a basis given by the cohomology classes of
the torus fixed points. Since these torus fixed points are indexed by vectors $d=(d_j^i)$ as in \eqref{fixed flag vector of integers}, we will denote their
classes in $H$ by the symbols $[d]$. \\

\subsection{}
\label{action on H}

In ~\cite{FK}, Finkelberg and Kuznetsov introduced a geometric structure of an $\fsl_n-$module on $H$, which we will describe below.
Given two flags $\CF,\CF'\in \CM$, we will write $\CF'\subset_i \CF$ if
$\CF'_j=\CF_j$ for $j\neq i$, while $\CF'_i\subset \CF_i$ in such a
way that $\textrm{length}(\CF_i/\CF'_i)=1$. Then for $1\leq
i\leq n-1$, let
\begin{equation}
\fC_i=\{(\CF, \CF')\in \CM \times \CM|\CF' \subset_i \CF\}
\end{equation}
As in ~\cite{BF}, we will call $\fC_i$ a \emph{simple
correspondence}. It is a smooth subvariety of $\CM \times \CM$ which will have middle dimension $2(d_1+...+d_{n-1})+1$ inside the component $\CM_{\gamma}\times \CM_{\gamma-\alpha_i}$. \\

Let $p,q:\CM \times \CM\rightarrow \CM$ be the standard projections onto the first and second factors. Define the operators $e_i,f_i:H \rightarrow H$ by
$$
e_i(\alpha)= p_*([\fC_i]\cdot q^*\alpha)
$$
$$
f_i(\alpha)= - q_*([\fC_i]\cdot p^*\alpha)
$$
If we let $\tilde{p},\tilde{q}:\fC_i\rightarrow \CM$ be the projections restricted to $\fC_i$, then the above definitions are equivalent to $e_i=\tilde{p}_*\tilde{q}^*$ and $f_i=-\tilde{q}_*\tilde{p}^*$. Theorem 3.4 in ~\cite{B1} gives us the following result:

\begin{theorem} \label{cohomology is Verma}
The operators $e_i$ and $f_i$ generate a well-defined $\fsl_n-$action on $H$. Under this action, $H$ is isomorphic to the Verma module $M(\frac ax-\rho)$.
\end{theorem}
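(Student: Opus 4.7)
The plan is to construct an explicit isomorphism $M(\frac{a}{x}-\rho)\to H$ sending the Verma cyclic vector to the class $[\emptyset]\in H^*_{T\times\BC^*}(\CM_0)$ of the unique point of $\CM_0$. By Theorem~\ref{locaization thm}, the localization classes $[d]$ indexed by the integer vectors of \eqref{fixed flag vector of integers} form a basis of $H$ over $\textrm{Frac}(H^*_{T\times\BC^*}(pt))$, so it suffices to match this basis, weight space by weight space, with the monomial basis of the Verma module. The summand $H^*_{T\times\BC^*}(\CM_\gamma)\otimes\textrm{Frac}$ is to be identified with the weight space of weight $\frac{a}{x}-\rho+\gamma$.

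The first technical step is to compute the matrix coefficients $\langle e_i[d'],[d]\rangle$ and $\langle f_i[d],[d']\rangle$ for each pair of torus-fixed flags with $d'\subset_i d$. Using $e_i=\tilde p_*\tilde q^*$ and $f_i=-\tilde q_*\tilde p^*$, together with the base-change identity \eqref{equiv cohom 2} applied to the fixed-locus square inside $\fC_i$ and the restriction formula \eqref{restriction to a point}, each entry reduces to an explicit rational expression in the $T\times\BC^*$-weights of the tangent spaces at fixed points of $\CM$ and $\fC_i$. From here, the commutations $[e_i,e_j]=[f_i,f_j]=0$ for $|i-j|\geq 2$ and $[e_i,f_j]=0$ for $i\neq j$ follow because the fiber products $\fC_i\times_\CM\fC_j$ and $\fC_j\times_\CM\fC_i$ inside $\CM\times\CM\times\CM$ coincide as scheme-theoretic intersections: modifications in non-adjacent steps of a quasiflag are independent. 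The diagonal case $[e_i,f_i]=h_i$ is handled by intersecting $\fC_i\times_\CM\fC_i$ and recognizing the diagonal copy of $\CM$ inside it; applying \eqref{equiv cohom 4} to that diagonal, combined with the matrix coefficients above, shows $h_i$ acts on $H^*_{T\times\BC^*}(\CM_\gamma)$ by the scalar $\langle h_i,\frac{a}{x}-\rho+\gamma\rangle$, the expected Cartan eigenvalue.

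The Serre relations $\textrm{ad}(e_i)^2(e_{i\pm 1})=\textrm{ad}(f_i)^2(f_{i\pm 1})=0$ are the most delicate. Here I would exploit the description of $\CM_\gamma$ via framed representations of the $A_{n-1}$-quiver, so that the operators $f_i$ realize the Ringel-Hall generators $e_i\in U(A_{n-1})$ on the equivariant cohomology; Theorem~\ref{Ringel-Hall = enveloping} then imports the Serre relations from $U(\fn^-)$ wholesale. With the $\fsl_n$-action in hand, one concludes as follows: since $\alpha_i\notin Q^-$ there is no summand $H^*_{T\times\BC^*}(\CM_{\alpha_i})$, hence $e_i[\emptyset]=0$ for every $i$, so $[\emptyset]$ is a highest-weight vector of weight $\frac{a}{x}-\rho$ and we obtain a $U\fsl_n$-surjection $M(\frac{a}{x}-\rho)\twoheadrightarrow H$. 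A weight-by-weight dimension count finishes the job: the number of integer vectors satisfying \eqref{flag existence condition} equals the Kostant partition count of $-\gamma$, which is exactly $\dim M(\frac{a}{x}-\rho)[\frac{a}{x}-\rho+\gamma]$, forcing the surjection to be an isomorphism.

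The main obstacle is the Serre relations. Verifying them directly from fixed-point bookkeeping on $\CM\times\CM\times\CM$ is unpleasant, and the natural bypass is the Ringel-Hall identification of Theorem~\ref{Ringel-Hall = enveloping} via the quiver model of Laumon space, which reduces the issue to an identity already established at the level of quiver representations.
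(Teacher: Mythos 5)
The paper does not prove this theorem: it is imported wholesale from Theorem~3.4 of Braverman's paper \cite{B1}, with only the remark that one should switch from lowest-weight to highest-weight Verma conventions. The references that Braverman himself relies on are \cite{FK} (for the $\fsl_n$-action on the localized equivariant cohomology) and, implicitly, the matrix-coefficient computations of \cite{BF}. So your proof is not ``the same as'' or ``different from'' the paper's proof---there is no proof in the paper to compare against. What you have written is a reasonable reconstruction of the argument from \cite{FK}/\cite{BF}, with a Ringel--Hall detour resembling \cite{GFK}.

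Evaluated on its own terms, the overall structure is sound: exhibit $[\emptyset]$ as a highest weight vector of weight $\frac{a}{x}-\rho$, build a $U\fsl_n$-surjection out of the Verma module, and conclude by matching weight-space dimensions against the Kostant partition count (and your count of the flag-existence vectors \eqref{flag existence condition} does indeed match the Kostant partition function, via $m_{i,j}=d^i_{j-1}-d^i_j$). Two points deserve attention. First, a misattribution: in the paper's conventions it is the operators $e_{\kappa}$ (including the $e_i$) that Theorem~\ref{operators Ringel-Hall} identifies with the opposite Ringel--Hall algebra $U(A_{n-1})^{op}\cong U(\fn^+)$; the statement for the $f_i$ requires the transposed correspondences $\fC_\kappa^T$, which the paper notes is the version actually proved in \cite{GFK}. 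This is a presentational slip, not a mathematical one, since both versions are available. Second, and more substantively, the one-line justification ``modifications in non-adjacent steps of a quasiflag are independent'' establishes $[e_i,f_j]=0$ only when $|i-j|\geq 2$; the case $|i-j|=1$ requires a genuine comparison of the fiber products $\fC_i\times_{\CM}\fC_{i+1}$ and $\fC_{i+1}\times_{\CM}\fC_i$, since $\CF_i\subset\CF_{i+1}$ couples the two modifications. Likewise the central assertion $[e_i,f_i]=h_i$ with the precise eigenvalue \eqref{action of h}---this is the heart of the argument and the part \cite{FK} works hardest for---is only gestured at. As a sketch this is acceptable, but those two computations cannot be waved away as routine; they are where the actual content lives.
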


\begin{remark}
The slight difference between the above theorem and its counterpart in
~\cite{B1} is due to the fact that we use highest weight Verma modules instead of lowest weight Verma modules, but the theory is analogous. \\
\end{remark}

Let $\gamma=-d_1\alpha_1-...-d_{n-1}\alpha_{n-1}$, and consider the cohomology class $[d]\in H$ corresponding to a fixed point $d\in \CM_\gamma^{T\times \BC^*}$. We want to write down how the Cartan elements $h_i=[e_i,f_i]$ act on $[d]$. Section 3.7 of ~\cite{FK} tells us that
\begin{equation} \label{action of h}
h_i([d])=\left(\frac {a_i}x-\frac {a_{i+1}}x-1+d_{i-1}-2d_i+d_{i+1}\right)\cdot [d]=\left\langle h_i, \frac ax-\rho+\gamma\right\rangle \cdot [d]
\end{equation}
Therefore the weight spaces of the $\fsl_n-$module $H$ are precisely the summands of \eqref{def of H}, namely
\begin{equation} \label{weights}
H\left[\frac ax-\rho+\gamma\right]=H^*_{T\times \BC^*}(\CM_\gamma) \bigotimes_{H^*_{T\times \BC^*}(pt)}\textrm{Frac}(H^*_{T\times \BC^*}(pt)).
\end{equation} \\

\subsection{}
\label{general correspondences}

One can generalize the definition of the simple correspondences $\fC_i$ to obtain even more operators on $H$, with the goal of studying the locus
\begin{equation}
\fC=\{(\CF, \CF')\in \CM \times \CM|\CF' \subset \CF\}
\end{equation}
The notation $\CF'\subset \CF$ means that $\CF'_i\subset \CF_i$ for all $1\leq i\leq n-1$, without any extra conditions. For such $\CF'\subset \CF$, the ``quotient'' $\CF/\CF'$ should be interpreted as the flag of quotients
\begin{equation} \label{flag of quotients}
\CF_1/\CF_{1}'\rightarrow...\rightarrow \CF_{n-1}/\CF_{n-1}'
\end{equation}
In the above, the maps are induced by the flag inclusion maps. Because $\CF_i$ and $\CF'_i$ have the same rank for all $i$, the quotient $\CF/\CF'$ is supported at finitely many points:
$$
\textrm{supp}(\CF/\CF')=\sum_{s\in \BP^1} \alpha_s\cdot s
$$
For each $s\in \BP^1$ that appears in the above sum, $\alpha_s$ is a vector of natural numbers $(\alpha_s^{(1)},...,\alpha_s^{(n-1)})$ such that $\alpha_s^{(i)}$ is the length of the quotient sheaf $\CF_i/\CF'_i$ at $s$. \\

The stalk of $\CF/\CF'$ at any given point $s\in \BP^1$ is a flag of finite dimensional vector spaces (with maps given by \eqref{flag of quotients}), and is thus a representation of the quiver $A_{n-1}$ of Section~\ref{quivers}. Let
$$
\gamma_s\in \textrm{Rep}(A_{n-1})
$$
denote the isomorphism class of this representation. We know from Section~\ref{quivers} that any isomorphism class $\kappa\in \textrm{Rep}(A_{n-1})$ can be uniquely written as a direct sum $\kappa=\oplus_{j=1}^m \theta_j$, where the $\theta_j$ are classes of indecomposable representations. Then we define the locus
$$
\fC_{\kappa}^{\circ} \subset \fC
$$
to be the set of $(\CF,\CF')$ with $\CF'\subset \CF$ which satisfy the conditions:
\begin{enumerate}
  \item there exist distinct points $s_1,...,s_m\in \BP^1$ such that
$$
\textrm{supp}(\CF/\CF')=\sum_{j=1}^m \textrm{dim}(\theta_j)\cdot s_j
$$
  \item $$\gamma_{s_j}=\theta_j$$
\end{enumerate}
Finally, we will call the closure $\fC_{\kappa}=\overline{\fC_{\kappa}^{\circ}}\subset \fC$ a \emph{correspondence}. Note that when $\kappa$ is the simple representation corresponding to the vertex $i$ of the quiver, then $\fC_{\kappa}=\fC_i$ as defined in the previous subsection. In ~\cite{FK} we encounter the following result:

\begin{proposition} \label{irreducible components}
The correspondences $\fC_{\kappa}$ are precisely the distinct irreducible components of $\fC$ of maximal dimension, and all have dimension equal to half the dimension of $\CM\times \CM$.
\end{proposition}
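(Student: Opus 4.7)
The plan is to stratify $\fC$ by the isomorphism type of the torsion quasiflag $\CF/\CF'$, identify the $\fC_\kappa^\circ$ as the strata associated to decompositions of $\kappa$ into indecomposables placed at distinct support points, and argue that these are precisely the top-dimensional strata. Given $(\CF, \CF') \in \fC$, recording for each $s \in \supp(\CF/\CF')$ the isomorphism class $\gamma_s$ of the stalk $(\CF/\CF')_s$ as an $A_{n-1}$-representation gives a constructible invariant on $\fC$, whose constancy cuts $\fC$ into locally closed pieces; the stratum corresponding to $\kappa = \bigoplus_{j=1}^m \theta_j$ with all $\theta_j$ placed at pairwise distinct points is by definition $\fC_\kappa^\circ$.

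To show each $\fC_\kappa^\circ$ is irreducible of the predicted dimension, I would combine the first projection with the support map to realise $\fC_\kappa^\circ$ as the quotient, by the finite group permuting equal indecomposables, of a variety fibering over $\CM_\gamma \times (\BP^1)^m_{\neq}$ with fibre over $(\CF, s_1, \dots, s_m)$ equal to $\prod_j X_j$; here $X_j$ is the punctual moduli of subquasiflags of $\CF$ at $s_j$ whose quotient has indecomposable type $\theta_j = [i_j; l_j)$. A local computation in a formal neighbourhood of a generic $s_j$, using that $\CF$ is locally free there and that many flag steps are rigidly determined by the unchanged indices, identifies $X_j$ as an irreducible smooth variety of dimension $l_j - 1$. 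Summing gives $\dim \fC_\kappa^\circ = 2\sum d_i + m + \sum_j (l_j - 1) = 2\sum d_i + \sum b_i$, where $(b_i)$ is the dimension vector of $\kappa$; this matches $\tfrac{1}{2}\dim(\CM_\gamma \times \CM_{\gamma - \sum b_i \alpha_i})$ since $\dim \CM_{\gamma - \sum b_i \alpha_i} = 2\sum d_i + 2\sum b_i$.

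Finally, for maximality I would show every other stratum has strictly smaller dimension. Any such stratum is obtained from some $\fC_\kappa^\circ$ by colliding two or more of the support points and possibly replacing the corresponding direct sum of indecomposables by a non-split extension. The collision costs at least one dimension in the configuration space, and the main technical obstacle will be to show that the compensating gain in local moduli at the merged point is always strictly less than one. This reduces to showing that for any two indecomposables $\theta, \theta'$ of $A_{n-1}$, the punctual moduli at a single point $s$ of subquasiflags whose quotient is an extension of $\theta$ by $\theta'$ has dimension strictly less than $\dim X_\theta + \dim X_{\theta'} + 1$; a direct $\Ext^1$ computation in $\Rep(A_{n-1})$, combined with an induction on the number of indecomposable summands, establishes this. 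Together with the irreducibility and dimension count above, this pins down the closures $\fC_\kappa$ as precisely the maximal-dimensional irreducible components of $\fC$, all of dimension $\tfrac{1}{2}\dim(\CM \times \CM)$ in the appropriate connected component.
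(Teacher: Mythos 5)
The paper does not prove this proposition; it cites it from Finkelberg--Kuznetsov (reference [FK], Global intersection cohomology of quasimaps' spaces), and only later, inside the proof of Lemma~\ref{transversal section}, deduces the stronger Proposition~\ref{irreducible components 2} by combining the cited statement with a dimension lower bound coming from the regular section of $E$. So there is no internal argument to compare yours against, and what you have written should be judged as an independent attempt.

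As such, your outline is sensible (stratify by local quiver type plus support configuration, compute the top strata $\fC_\kappa^\circ$ via a fibration over $\CM_\gamma\times(\BP^1)^m_{\neq}$, then show all other strata drop in dimension), and your arithmetic $2\sum d_i + m + \sum_j(l_j-1) = 2\sum d_i + \sum b_i = \tfrac12\dim(\CM_\gamma\times\CM_{\gamma'})$ is correct. But there is a genuine gap in the maximality step, which is really the whole content. What you need is the clean statement: for any quiver representation $\lambda$ of $A_{n-1}$ regarded as a local type at a single point $s\in\BA^1$, the punctual moduli of subquasiflags of a generic $\CF$ with quotient of type $\lambda$ at $s$ has dimension at most $\dim_{\mathrm{tot}}\lambda - 1$, with equality if and only if $\lambda$ is indecomposable. (This is exactly what makes $m' + \sum_s \dim(\mbox{local moduli}) < \sum b_i$ whenever $m' < m$.) Your proposed reduction to ``two indecomposables colliding plus an $\Ext^1$ computation in $\Rep(A_{n-1})$'' does not obviously produce this bound: the relevant moduli is not controlled by $\Ext^1(\theta,\theta')$ in the quiver category, since the local moduli carries an additional $\BC[t]$-module structure that the quiver-theoretic $\Ext$ does not see, and for degenerations involving three or more indecomposables at one point the induction you gesture at is not set up. You also do not address the fact that the fibers $X_j$ jump in dimension over the locus of $(\CF,s_j)$ where the flag $\CF$ has torsion at $s_j$, which you need to control both for irreducibility of $\fC_\kappa^\circ$ and for the upper bound on $\dim\fC_\kappa^\circ$. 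I would encourage you to formulate and prove the displayed inequality directly by a local computation at a formal disk, and to note that the paper's Lemma~\ref{transversal section} hands you for free the inequality $\dim Z \geq \tfrac12\dim(\CM\times\CM)$ for every component $Z$ of $\fC$ (via the regular section of $E$), so that what actually remains to prove is only the upper bound and the identification of the top strata.
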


Since $\CM\times \CM$ is a disjoint union of components of various dimensions, the above statement needs a few clarifications. The correspondence $\fC_\kappa$ intersects the component $\CM_\gamma \times \CM_{\gamma'}$ only if $\gamma'-\gamma=\textrm{dim}(\kappa)$. What the proposition says is that in each component $\CM_\gamma\times \CM_{\gamma'}$, the subvarieties $\fC_\kappa$ with $\gamma'-\gamma=\textrm{dim}(\kappa)$ are precisely the distinct irreducible components of $\fC$ of maximal dimension. Moreover, the dimension of $\fC_\kappa$ in the component $\CM_\gamma\times \CM_{\gamma'}$ is equal to half the dimension of $\CM_\gamma\times \CM_{\gamma'}$. \\

The correspondences $\fC_{\kappa}$ define operators on $H$ via
$$
e_{\kappa}(\alpha)= p_*([\fC_\kappa]\cdot q^*\alpha)
$$
where $p,q:\CM\times \CM\rightarrow \CM$ are the standard projections onto the first and second factors, respectively. To understand how the operators $e_{\kappa}$ act, we have the following:

\begin{theorem} \label{operators Ringel-Hall}
The map that sends the elements $e_{\kappa}\in U(A_{n-1})^{op}$ to the operators $e_{\kappa}$ defined above is an algebra homomorphism. Here $U(A_{n-1})^{op}$ means the algebra opposite to the Ringel-Hall algebra of $A_{n-1}$.
\end{theorem}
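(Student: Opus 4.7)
The plan is to realize the composition $e_{\kappa'} \circ e_{\kappa''}$ of geometric operators as a convolution of correspondences, and to identify the resulting cycle on $\CM \times \CM$ with a linear combination of the fundamental classes $[\fC_\kappa]$ whose multiplicities match the $q=1$ Ringel-Hall structure constants.

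First, I would rewrite the composition via convolution. Introducing the double projections $p_{12}, p_{13}, p_{23}: \CM^3 \to \CM^2$ and the two standard projections $p,q: \CM^2 \to \CM$, base change applied to the evident Cartesian square together with the projection formula gives
\begin{equation*}
e_{\kappa'} \circ e_{\kappa''}(\alpha) \;=\; p_*\!\left( \Omega \cdot q^*\alpha \right),\qquad \Omega \;:=\; p_{13*}\bigl( p_{12}^*[\fC_{\kappa'}] \cdot p_{23}^*[\fC_{\kappa''}] \bigr).
\end{equation*}
The support of $\Omega$ lies in $\fC \subset \CM \times \CM$: indeed, $p_{13}$ maps the intersection $p_{12}^{-1}(\fC_{\kappa'}) \cap p_{23}^{-1}(\fC_{\kappa''})$, which parametrizes triples $\CF''' \subset \CF'' \subset \CF$ with top quotient $\CF/\CF''$ of class $\kappa'$ and bottom quotient $\CF''/\CF'''$ of class $\kappa''$, into $\fC$. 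A dimension count using Proposition~\ref{irreducible components} shows this intersection has the same dimension as the relevant $\fC_\kappa$, so $p_{13}$ is generically finite over its image. Hence $\Omega$ is a finite integer combination $\sum_\kappa n_\kappa [\fC_\kappa]$, the sum ranging over those $\kappa$ admitting a short exact sequence of quiver representations $0 \to \kappa'' \to \kappa \to \kappa' \to 0$.

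Second, I would compute $n_\kappa$ as the degree of $p_{13}$ over a generic point $(\CF, \CF''') \in \fC_\kappa^{\circ}$. By construction of $\fC_\kappa^{\circ}$, the quotient $\CF/\CF'''$ is supported at distinct points $s_1, \dots, s_m \in \BP^1$, whose stalks realize the indecomposable summands of $\kappa$. An intermediate $\CF''$ is then determined by an independent choice, at each $s_j$, of a subrepresentation of the local quiver representation whose isomorphism class matches the corresponding summand of $\kappa''$ and whose quotient matches the corresponding summand of $\kappa'$. Because the $s_j$ are distinct, the fiber splits as a product over support points, and its cardinality equals the number of filtrations $0 \subset V'' \subset V$ with $V'' \simeq \kappa''$ and $V/V'' \simeq \kappa'$ for a representative $V$ of class $\kappa$. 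This is precisely $P^\kappa_{\kappa'',\kappa'}(1)$. Combining,
\begin{equation*}
e_{\kappa'} \circ e_{\kappa''} \;=\; \sum_\kappa P^\kappa_{\kappa'',\kappa'}(1) \cdot e_\kappa \;=\; e_{\kappa''} \cdot e_{\kappa'}
\end{equation*}
with the right-hand product taken in $U(A_{n-1})$. The reversal of factors between composition and product means the assignment $e_\kappa \mapsto e_\kappa$ is an algebra homomorphism from $U(A_{n-1})^{op}$ to $\End(H)$, as claimed.

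The main obstacle is controlling the multiplicity computation in the second step: one must rule out excess-intersection contributions from non-generic strata of the fiber product $\fC_{\kappa'} \times_\CM \fC_{\kappa''}$, where $\CF''$ may degenerate along positive-dimensional loci. Establishing transversality (or generic reducedness) of $p_{12}^{-1}(\fC_{\kappa'}) \cap p_{23}^{-1}(\fC_{\kappa''})$ along the generic locus of each $\fC_\kappa$, and checking that the local count of intermediate subrepresentations at each $s_j$ agrees with the $q=1$ specialization of the Hall polynomial rather than a competing moduli-theoretic count, is the technical heart of the proof; for the Dynkin quiver $A_{n-1}$ both points are accessible because indecomposables are intervals and subrepresentation varieties of direct sums of intervals are explicit (see ~\cite{R1}).
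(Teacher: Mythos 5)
The paper itself does not prove Theorem~\ref{operators Ringel-Hall}: immediately after the statement it defers to Theorem 7.12 of \cite{GFK}, noting only that replacing the transposed correspondences $\fC_\kappa^T$ used there by the $\fC_\kappa$ here is what produces the word ``opposite.'' So there is no in-paper argument to compare against, but your convolution-algebra approach is the standard route and almost certainly mirrors the cited proof in spirit.

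Your outline is sound: you write $e_{\kappa'}\circ e_{\kappa''}$ as convolution by $\Omega = p_{13*}(p_{12}^*[\fC_{\kappa'}]\cdot p_{23}^*[\fC_{\kappa''}])$, observe that the fiber product parametrizes chains $\CF'''\subset\CF''\subset\CF$ with $\CF/\CF''$ of class $\kappa'$ and $\CF''/\CF'''$ of class $\kappa''$, reduce the generic fiber count of $p_{13}$ over $\fC_\kappa^\circ$ to independent choices of subrepresentations of the stalks at the distinct support points, and land on $P^\kappa_{\kappa'',\kappa'}(1)$, so that geometric composition matches $e_{\kappa''}\cdot e_{\kappa'}$ in the $q=1$ Hall algebra. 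You also correctly note that the Euler-form twist $q^{\langle\cdot,\cdot\rangle}$ disappears at $q=1$, and that the order reversal is exactly what makes the assignment a homomorphism out of $U(A_{n-1})^{op}$ rather than $U(A_{n-1})$.

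The obstacle you flag at the end is a genuine gap, not a formality to be delegated. Two things are unproved as written: (i) that the fiber product $p_{12}^{-1}(\fC_{\kappa'})\cap p_{23}^{-1}(\fC_{\kappa''})$ is generically reduced and equidimensional of the expected dimension over each $\fC_\kappa$, so that $\Omega$ is exactly $\sum_\kappa n_\kappa[\fC_\kappa]$ with no contributions of the wrong degree and no unexpected multiplicities; and (ii) that over the generic point of $\fC_\kappa^\circ$ every intermediate $\CF''$ in the fiber puts $(\CF,\CF'')$ and $(\CF'',\CF''')$ into the open strata $\fC_{\kappa'}^\circ$ and $\fC_{\kappa''}^\circ$ rather than merely into their closures --- otherwise the fiber of $p_{13}$ need not coincide with the set of subrepresentations of $\CF/\CF'''$ that you count. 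For $A_{n-1}$ both are tractable (subobjects and quotients of interval modules are again intervals, so the stalk of $\CF/\CF''$ at each $s_j$ remains indecomposable), but a proof must spell this out, and should also record the properness of the projections restricted to the correspondences, on which the convolution identity rests.
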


In other words, the operators $e_{\kappa}$ follow the same multiplication rules as the corresponding symbols from the opposite Ringel-Hall algebra of $A_{n-1}$. We will not prove Theorem~\ref{operators Ringel-Hall} here, since it has been proved in the more general case of parabolic flags of sheaves in ~\cite{GFK}, Theorem 7.12. Note that in ~\cite{GFK} the theorem is proved for the operators given by the transposed correspondences $\fC_{\kappa}^T$. Though the proof is completely analogous, this accounts for the word ``opposite'' in the statement of Theorem~\ref{operators Ringel-Hall}. \\

\section{The Vector Bundle $E$}
\label{the vector bundle E}

\subsection{} Let us consider the sheaf $E$ on $\CM\times \CM$, whose fiber over $(\CF, \CF')\in \CM \times \CM$ is
\begin{equation} \label{definition of E}
E|_{(\CF, \CF')}=\textrm{Hom}(\CF'(\infty), \CO^n/\CF)
\end{equation}
By definition, elements of $\textrm{Hom}(\CF'(\infty), \CO^n/\CF)$ are tuples $(\phi_1,...,\phi_{n-1})$ of morphisms $\phi_i:\CF'_i(\infty)\rightarrow \CO^n/\CF_i$ such that the diagram

\begin{equation} \label{n-1 tuple of homs}
\begin{CD}
\CF'_1(\infty) @>{\phi_1}>> \CO^n/\CF_1 \\
@VVV @VVV \\
\CF'_2(\infty) @>{\phi_2}>> \CO^n/\CF_2 \\
@VVV @VVV \\
... @>>> ... \\
@VVV @VVV \\
\CF'_{n-1}(\infty) @>{\phi_{n-1}}>> \CO^n/\CF_{n-1} \\
\end{CD}
\end{equation}
commutes. In the above, all the vertical maps are induced by the flag inclusions.

\begin{proposition} \label{E tangent bundle}
The restriction of $E$ to the diagonal $\Delta \subset \CM\times
\CM$ is just the tangent bundle of $\CM$.
\end{proposition}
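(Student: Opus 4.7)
The plan is to use deformation theory to identify the tangent space $T_\CF\CM$ with a space of ``flag Hom''s, and then convert that space into the fiber of $E$ at $(\CF,\CF)$ via a twist by $\CO(\infty)$.

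At any $\CF\in\CM_\gamma$, a tangent vector is a first-order deformation of the flag $\CF_1\subset\ldots\subset\CF_{n-1}\subset\CO^n$ that preserves the framing at $\infty$. Since the ambient sheaf $\CO^n$ is fixed, standard deformation theory identifies first-order deformations of $\CF_i\subset\CO^n$ as a subsheaf with elements of $\Hom(\CF_i,\CO^n/\CF_i)$; compatibility with the flag inclusions $\CF_i\hookrightarrow\CF_{i+1}$ forces an $(n-1)$-tuple $(\psi_1,\ldots,\psi_{n-1})$ of such maps to fit into a commutative ladder analogous to \eqref{n-1 tuple of homs}. Thus unframed first-order deformations of the flag are canonically parametrized by the space of commuting $(n-1)$-tuples $\psi_i:\CF_i\to\CO^n/\CF_i$.

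Next, the framing condition says that $\CF_i|_\infty\subset\CO^n|_\infty$ remains fixed, which to first order is equivalent to $\psi_i|_\infty=0$. A morphism $\CF_i\to\CO^n/\CF_i$ restricts to zero on the fiber at $\infty$ precisely when it factors through the subsheaf $(\CO^n/\CF_i)(-\infty)\hookrightarrow\CO^n/\CF_i$, so framed deformations correspond to $\psi_i\in\Hom(\CF_i,(\CO^n/\CF_i)(-\infty))$. By the twist adjunction $\Hom(\CF_i,\CG(-\infty))\cong\Hom(\CF_i(\infty),\CG)$ on $\BP^1$, this is canonically $\Hom(\CF_i(\infty),\CO^n/\CF_i)$. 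Assembling these identifications for $i=1,\ldots,n-1$ while preserving ladder commutativity yields a canonical isomorphism $T_\CF\CM\cong\Hom(\CF(\infty),\CO^n/\CF)=E|_{(\CF,\CF)}$, where the right-hand side is the fiber of $E$ from \eqref{definition of E}.

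To upgrade this fiberwise identification to an isomorphism of vector bundles on $\Delta\cong\CM$, I would globalize via the universal flag on $\CM\times\BP^1$: both $T\CM$ and $E|_\Delta$ can be realized as relative $\Hom$-sheaves along the projection $\CM\times\BP^1\to\CM$, and the twist adjunction, being natural in sheaves, produces a map between them that is an isomorphism on every fiber, hence an isomorphism of locally free sheaves of common rank $2(d_1+\ldots+d_{n-1})=\dim\CM_\gamma$. The main technical hurdle is making this family argument rigorous: one has to check that the framing condition globally cuts out the vanishing-at-$\infty$ subsheaf inside the relative flag $\Hom$, and that $\Ext^1$ obstructions vanish; but the smoothness of $\CM_\gamma$ recalled in Section~\ref{Quasiflag Spaces} ensures the deformation theory is unobstructed, so the fiberwise bijection extends to the claimed isomorphism of vector bundles.
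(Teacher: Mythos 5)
Your proof is correct and takes essentially the same route as the paper: identify $T_\CF\CM$ with $\Hom(\CF(\infty),\CO^n/\CF)$ via flag-Quot deformation theory, where the twist by $\CO(\infty)$ records the framing condition that the fibers at $\infty$ do not deform. The paper disposes of this proposition in one line by citing~\cite{BE} and~\cite{CF} for the tangent-space description; you have simply supplied the deformation-theoretic ladder argument and the adjunction $\Hom(\CF_i,(\CO^n/\CF_i)(-\infty))\cong\Hom(\CF_i(\infty),\CO^n/\CF_i)$ that those references contain, which is a fine expansion of the same idea.
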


\begin{proof} The proposition is merely a consequence of the fact that the tangent space to $\CM$ at $\CF$ is $\textrm{Hom}(\CF(\infty), \CO^n/\CF)$, as described in ~\cite{BE} and ~\cite{CF}. The extra torsion by $\infty\in \BP^1$ is a consequence of the fact that our flags are fixed at that point, and thus there cannot be any deformation of their fibers at $\infty\in \BP^1$.
\end{proof} \\

\subsection{} The sheaf $E$ admits an alternative description, which will be very useful in establishing certain technical results later on. Let $\pi:\BP^1\times \BA^1\rightarrow \BP^1$ be the standard
projection, and denote $L_0=\BP^1\times 0$. Following~\cite{BI}, to any flag of sheaves
as in \eqref{flag of sheaves} we associate the torsion-free subsheaf
$\tilde{\CF}\subset \CO_{\BP^1\times \BA^1}^n$ given by
$$
\tilde{\CF}=\pi^*\CF_1+\pi^*\CF_2(-L_0)...+\pi^*\CF_{n-1}(-(n-2)L_0)+\CO_{\BP^1\times
\BA^1}^n(-(n-1)L_0)
$$
The sum does not refer to a direct sum, but to a sum of subsheaves of $\CO_{\BP^1\times \BA^1}^n$.
Constants $z\in \BC^*$ act on $\BP^1\times \BA^1$ by multiplying the second factor, and under this action each summand
$\pi^*\CF_{i+1}(-iL_0)$ of $\tilde{\CF}$ is preserved. Therefore, $\tilde{\CF}$ is a $z-$invariant subsheaf of $\CO^n_{\BP^1\times \BA^1}$. \\

\subsection{} Let $D_\infty=\infty\times \BA^1$. Define
$\textrm{Hom}_z(\tilde{\CF}'(D_{\infty}),\CO_{\BP^1\times
\BA^1}^n/\tilde{\CF})$ to be the space of $z-$invariant morphisms
between the sheaves in question, and we will show that there is a
natural isomorphism
\begin{equation} \label{natural iso}
\textrm{Hom}(\CF'(\infty), \CO^n/\CF)\cong
\textrm{Hom}_z(\tilde{\CF}'(D_{\infty}),\CO_{\BP^1\times
\BA^1}^n/\tilde{\CF}).
\end{equation}
To define this isomorphism, take an element $\phi\in
\textrm{Hom}(\CF'(\infty), \CO^n/\CF)$ as in \eqref{n-1 tuple of
homs}. Each one of its component morphisms
$$
\phi_{i+1}:\CF'_{i+1}(\infty)\rightarrow \CO^n/\CF_{i+1}
$$
naturally lifts to a $z-$invariant morphism
$$
\tilde{\phi}'_{i+1}:\pi^*\CF'_{i+1}(D_\infty)\rightarrow
\CO^n_{\BP^1\times \BA^1}/\pi^*\CF_{i+1}
$$
Let us twist this morphism by $-iL_0$, and then compose it with the
natural map
$$
\CO^n_{\BP^1\times
\BA^1}(-iL_0)/\pi^*\CF_{i+1}(-iL_0)\hookrightarrow
\CO^n_{\BP^1\times \BA^1}/\pi^*\CF_{i+1}(-iL_0) \twoheadrightarrow
\CO^n_{\BP^1\times \BA^1}/\tilde{\CF}
$$
In this way we obtain $z-$invariant homomorphisms
$$
\tilde{\phi}_{i+1}:\pi^*\CF'_{i+1}(D_\infty-iL_0)\rightarrow
\CO^n_{\BP^1\times \BA^1}/\tilde{\CF}
$$
By the commutativity
of the diagram \eqref{n-1 tuple of homs}, these homomorphisms paste to give a
well-defined $z-$invariant morphism $\tilde{\phi}:\tilde{\CF}'(D_{\infty})\rightarrow \CO_{\BP^1\times
\BA^1}^n/\tilde{\CF}$. \\

Conversely, take $\tilde{\phi}\in
\textrm{Hom}_z(\tilde{\CF}'(D_{\infty}),\CO_{\BP^1\times
\BA^1}^n/\tilde{\CF})$. By restriction, it gives rise to
$z-$invariant homomorphisms
$$
\tilde{\phi}_{i+1}:\pi^*\CF_{i+1}'(D_\infty-iL_0)\rightarrow
\CO_{\BP^1\times \BA^1}^n/\tilde{\CF}
$$
For each $i$, the fact that
this morphism is $z-$invariant implies that it lifts to a morphism
into $\CO_{\BP^1\times \BA^1}^n/\pi^*\CF_{i+1}(-iL_0)$, and moreover
that its image lies in $\CO_{\BP^1\times
\BA^1}^n(-iL_0)/\pi^*\CF_{i+1}(-iL_0)$. Twisting by $iL_0$ gives
rise to a $z-$invariant homomorphism
$$
\tilde{\phi}'_{i+1}:\pi^*\CF'_{i+1}(D_\infty)\rightarrow
\CO^n_{\BP^1\times \BA^1}/\pi^*\CF_{i+1}
$$
Since all the sheaves in the above are pulled back from $\BP^1$,
then Lemma~\ref{general result} below implies that $\tilde{\phi}'_{i+1}$
is the lift of a homomorphism of sheaves on $\BP^1$
$$
\phi_{i+1}:\CF'_{i+1}(\infty)\rightarrow \CO^n/\CF_{i+1}
$$
The fact that the homomorphisms $\tilde{\phi}_{i+1}$ paste to give
the homomorphism $\tilde{\phi}$ implies precisely that the morphisms
$\phi_{i+1}$ make the diagram \eqref{n-1 tuple of homs} commute.
Therefore they give rise to an element $\phi\in
\textrm{Hom}(\CF'(\infty), \CO^n/\CF)$. \\

It is easily seen that the procedures $\phi\rightarrow \tilde{\phi}$
and $\tilde{\phi}\rightarrow \phi$ described above are inverse to
each other, and thus they give the desired natural isomorphism
\eqref{natural iso}. Thus we may conclude that $E$ is the sheaf
whose fiber above $(\CF, \CF')$ is
\begin{equation} \label{reinterpret E}
E|_{(\CF,\CF')}=\textrm{Hom}_z(\tilde{\CF}'(D_{\infty}),\CO_{\BP^1\times
\BA^1}^n/\tilde{\CF}).
\end{equation} 

\subsection{} For any $i\geq 2$ and any coherent sheaves $\CS,\CS'$ on $\BP^1\times \BA^1$, we have $\textrm{Ext}^i(\CS, \CS')=0$.
This can be easily seen by covering $\BP^1\times \BA^1$ with the
affine charts $(\BP^1-0)\times \BA^1$ and $(\BP^1-\infty)\times
\BA^1$, and then using the \v Cech complex to compute $\textrm{Ext}^i(\CS, \CS')$. Then let us define the ``virtual vector space''
\begin{equation}
\chi(\CS, \CS')=\textrm{Hom}(\CS, \CS')-\textrm{Ext}^1(\CS, \CS')
\end{equation}
If $\CS$ and $\CS'$ are $z-$invariant sheaves on
$\BP^1\times \BA^1$, we can take the $z-$invariant parts of the
$\textrm{Ext}$ groups in question, and define
\begin{equation}
\chi_z(\CS, \CS')=\textrm{Hom}_z(\CS, \CS')-\textrm{Ext}^1_z(\CS,\CS')
\end{equation}
The advantage of the functor $\chi$ over the functors $\textrm{Ext}^i$ is the following additivity property: suppose $\CS, \CS', \CS'', \CS_0$ are coherent sheaves such that the sequence
$$
0\rightarrow \CS'\rightarrow \CS\rightarrow \CS''\rightarrow 0
$$
is exact. Then we have the following relations in the Grothendieck group:
$$
\chi(\CS_0, \CS)= \chi(\CS_0, \CS')+\chi(\CS_0, \CS'')
$$
\begin{equation} \label{additivity of chi}
\chi(\CS,\CS_0)= \chi(\CS', \CS_0)+\chi(\CS'', \CS_0)
\end{equation}
The same additivity property holds for
the functor $\chi_z$, when $\CS, \CS', \CS'', \CS_0$ are
$z-$invariant sheaves. Moreover, we have
\begin{equation} \label{twist of chi}
\chi(\CS\otimes L, \CS'\otimes L)=\chi(\CS, \CS')
\end{equation}
for any coherent sheaves $\CS,\CS'$ and line bundle $L$. The same
property holds for $\chi_z$, when $\CS, \CS'$ and $L$ are
$z-$invariant. \\

\subsection{} The above natural properties of the functors $\chi$ and $\chi_z$ will allow us to reinterpret the sheaf $E$.
To do this, we will need the following technical result:
\begin{lemma} \label{general result}
Let $\CS, \CS'$ be locally free sheaves on $\BP^1$. Then for every
$i\geq 0$ we have natural isomorphisms
\begin{equation} \label{computation chi z k<=0}
\emph{Ext}^i_z(\pi^*\CS'(kL_0), \pi^*\CS) \cong \emph{Ext}^i(\CS',
\CS), \emph{ if } k\leq 0
\end{equation}
\begin{equation} \label{computation chi z k>0}
\emph{Ext}^i_z(\pi^*\CS'(kL_0), \pi^*\CS) \cong 0, \emph{ if } k > 0
\end{equation}
\end{lemma}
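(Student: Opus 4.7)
The plan is to convert $\text{Ext}^i_z$ into the $\BC^*$-invariant part of the ordinary $\text{Ext}^i$, and then to reduce the latter to cohomology on $\BP^1$ via Leray and the projection formula. Concretely, since $\BC^*$ is reductive, $\text{Ext}^i_z(\CA,\CB)$ coincides with the weight-zero subspace of the ordinary $\text{Ext}^i(\CA,\CB)$, equipped with its induced $\BC^*$-action. Because $\pi^*\CS'(kL_0)$ is locally free, one has
\[
\text{Ext}^i(\pi^*\CS'(kL_0),\pi^*\CS)=H^i\bigl(\BP^1\times\BA^1,\pi^*\CH\otimes\CO(-kL_0)\bigr),
\]
where $\CH:=\mathcal{H}om(\CS',\CS)$ is a locally free sheaf on $\BP^1$.

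Next, since $\pi:\BP^1\times\BA^1\to\BP^1$ has affine fibers and the sheaf $\pi^*\CH\otimes\CO(-kL_0)$ is quasi-coherent, the higher direct images $R^j\pi_*$ vanish for $j\geq 1$, so the Leray spectral sequence collapses to
\[
H^i\bigl(\BP^1\times\BA^1,\pi^*\CH\otimes\CO(-kL_0)\bigr)\cong H^i\bigl(\BP^1,\CH\otimes_\BC\pi_*\CO(-kL_0)\bigr).
\]
The key local computation is that $\pi_*\CO(-kL_0)=z^k\BC[z]$ as a $\BC^*$-module (with $z^j$ of weight $j$, since the coordinate $z$ on $\BA^1$ has weight $1$ under the scaling action). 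Because $\CH$ carries the trivial $\BC^*$-structure, the weight-$j$ part of the right-hand side is $H^i(\BP^1,\CH)\cdot z^j$ for $j\geq k$, and all other weight spaces vanish. Taking $\BC^*$-invariants (i.e.\ the weight-$0$ part) gives $H^i(\BP^1,\CH)$ if $k\leq 0$ and $0$ if $k>0$. Finally, since $\CS'$ is locally free on $\BP^1$, $H^i(\BP^1,\CH)=\text{Ext}^i(\CS',\CS)$, and this vanishes for $i\geq 2$ in accordance with both sides of the claimed identities.

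The only delicate step is the identification of the $\BC^*$-equivariant structure on $\pi^*\CS'(kL_0)$, which amounts to observing that the canonical trivialization $\CO(kL_0)=\CO\cdot z^{-k}$ realizes this line bundle as $\CO_{\BP^1\times\BA^1}$ with its grading shifted by $-k$; dualizing gives the weight shift by $+k$ appearing in $\pi_*\CO(-kL_0)=z^k\BC[z]$. Once this sign is settled, all the steps above are formal applications of Leray, the projection formula, and the semisimplicity of $\BC^*$-representations.
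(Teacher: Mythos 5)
Your proof is correct, and it is organized rather differently from the paper's. The paper proves the $i=0$ case by hand: a $z$-invariant morphism must send local sections $P(x)z^{-k}$ of $\pi^*\CS'(kL_0)$ to sections $Q(x)z^{-k}$ of $\pi^*\CS$, which for $k>0$ simply do not exist and for $k\le 0$ amount precisely to the data of a morphism $\CS'\to\CS$; for $i>0$ the paper then waves at the corresponding comparison of \v Cech complexes for the two-chart coverings of $\BP^1\times\BA^1$ and $\BP^1$. You instead compute the full graded object first and only extract the weight-zero piece at the end: local freeness converts $\Ext^i$ on $\BP^1\times\BA^1$ to $H^i$ of $\pi^*\CHom(\CS',\CS)\otimes\CO(-kL_0)$, affineness of $\pi$ and the projection formula bring this down to $H^i(\BP^1,\CHom(\CS',\CS))\otimes z^k\BC[z]$, and the weight-zero part is $H^i(\BP^1,\CHom(\CS',\CS))=\Ext^i(\CS',\CS)$ iff $k\le 0$. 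Your route makes the case of general $i$ entirely transparent, at the modest cost of invoking Leray/projection-formula machinery; the paper's route is more elementary but leaves the $i\geq 1$ step as a sketch. One small remark: the appeal to reductivity of $\BC^*$ is unnecessary here, since the paper simply defines $\Ext^i_z$ to be the $z$-invariant subspace of $\Ext^i$, so no comparison between equivariant $\Ext$ and invariants of $\Ext$ is needed. Also, as you note, the precise sign of the $\BC^*$-weight on the coordinate of $\BA^1$ is immaterial, since the only thing used is that $z^j\BC[z]$ contains weight zero iff $j\le 0$.
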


\begin{proof} It is enough to prove the lemma for $i=0$, namely for the
functor $\textrm{Hom}$. This is because the same argument will prove the Lemma for arbitrary $i$,
by using the \v Cech complexes with respect to the affine coverings
$$
\BP^1\times \BA^1=(\BP^1-0)\times \BA^1 \cup (\BP^1-\infty)\times
\BA^1, \textrm{ }\textrm{ }\textrm{ }\textrm{ } \BP^1=(\BP^1-0) \cup
(\BP^1-\infty)
$$
to compute the functors $\textrm{Ext}^i_z$ and $\textrm{Ext}^i$,
respectively. \\

So let us describe a $z-$invariant homomorphism from
$\pi^*\CS'(kL_0)$ to $\pi^*\CS$. The local sections of
$\pi^*\CS'(kL_0)$ are generated over $z$ by expressions of the form
$P(x)z^{-k}$, where $x$ is a local coordinate on $\BP^1$ and $P(x)$
is a local section of $\CS'$. If we want to map this in a
$z-$invariant way, we must send it to some expression of the form
$Q(x)z^{-k}$. But if $k>0$, there simply are no such local sections
in $\pi^*\CS$, which proves \eqref{computation chi z k>0}. \\

If, on the other hand, $k\leq 0$, then any $z-$invariant morphism is
completely determined by sending local sections $P(x)z^{-k}$ of
$\pi^*\CS'(kL_0)$ to local sections $Q(x)z^{-k}$ of $\pi^*\CS$. This
amounts to sending local sections $P(x)$ of $\CS'$ to local sections
$Q(x)$ of $\CS$. But this is just the data that defines a
homomorphism from $\CS'$ to $\CS$, and this proves
\eqref{computation chi z k<=0}.\\
\end{proof}

This lemma has a very useful corollary. Consider the following
resolution of $\tilde{\CF}$:
\begin{equation} \label{locally free resolution}
0\rightarrow \bigoplus_{i=1}^{n-1} \pi^*\CF_i(-iL_0)\rightarrow
\bigoplus_{i=1}^n \pi^*\CF_i(-(i-1)L_0)\rightarrow
\tilde{\CF}\rightarrow 0
\end{equation}
where we write $\CF_n=\CO^n$ by convention. The second non-zero arrow sends the direct sum to the sum of
sheaves, while the first non-zero arrow sends the summand
$\pi^*\CF_i(-iL_0)$ into $\pi^*\CF_i(-(i-1)L_0)$ via the standard
embedding $\CO(-L_0)\hookrightarrow \CO$ and into
$\pi^*\CF_{i+1}(-iL_0)$ via minus the embedding
$\CF_i\hookrightarrow \CF_{i+1}$.

\begin{lemma} \label{intermediary 0} For any flag $\CF\in \CM$ and any $i\geq 0$, we have
$$
\emph{Ext}^i_z(\tilde{\CF}(D_\infty), \CO_{\BP^1\times \BA^1})=0
$$
\end{lemma}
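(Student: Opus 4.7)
The plan is to use the long exact sequence of $\textrm{Ext}^\bullet_z(-,\CO_{\BP^1\times \BA^1})$ obtained from the locally free resolution \eqref{locally free resolution} twisted by $D_\infty$,
\begin{equation*}
0\to \bigoplus_{j=1}^{n-1}\pi^*\CF_j(D_\infty-jL_0) \to \bigoplus_{j=1}^{n}\pi^*\CF_j(D_\infty-(j-1)L_0)\to \tilde{\CF}(D_\infty)\to 0,
\end{equation*}
and to reduce the vanishing claim to statements about $\textrm{Ext}^i$ on $\BP^1$ via Lemma~\ref{general result}. All coefficients of $L_0$ appearing in the resolution are non-positive, so rewriting $\pi^*\CF_j(D_\infty-kL_0)=\pi^*(\CF_j(\infty))(-kL_0)$ and applying Lemma~\ref{general result} with $\CS'=\CF_j(\infty)$ and $\CS=\CO_{\BP^1}$ identifies
\begin{equation*}
\textrm{Ext}^i_z(\pi^*\CF_j(D_\infty-kL_0), \CO_{\BP^1\times \BA^1})\cong \textrm{Ext}^i(\CF_j(\infty), \CO_{\BP^1})
\end{equation*}
for each $k\ge 0$ appearing in the resolution.

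Next I would show these $\textrm{Ext}$ groups on $\BP^1$ vanish for $i\ge 1$. The inclusion $\CF_j\hookrightarrow \CO^n$, twisted by $\infty$, fits in a short exact sequence $0\to \CF_j(\infty)\to \CO(1)^n\to Q\to 0$ with $Q$ a torsion sheaf. Since $\textrm{Ext}^i(\CO(1),\CO_{\BP^1})=H^i(\CO_{\BP^1}(-1))=0$ for all $i\ge 0$, and since $\BP^1$ has cohomological dimension one (so $\textrm{Ext}^{\geq 2}(Q,\CO_{\BP^1})=0$), the associated long exact sequence forces $\textrm{Ext}^i(\CF_j(\infty),\CO_{\BP^1})=0$ for every $i\ge 1$. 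Feeding this into the $\textrm{Ext}^\bullet_z$ long exact sequence of the twisted resolution immediately yields $\textrm{Ext}^i_z(\tilde{\CF}(D_\infty), \CO_{\BP^1\times\BA^1})=0$ for $i\ge 2$.

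For the remaining cases $i=0,1$, it suffices to prove that the dualized differential
\begin{equation*}
\bigoplus_{j=1}^{n}\textrm{Hom}(\CF_j(\infty),\CO_{\BP^1})\longrightarrow \bigoplus_{j=1}^{n-1}\textrm{Hom}(\CF_j(\infty),\CO_{\BP^1})
\end{equation*}
is an isomorphism. Unpacking the resolution's differential through Lemma~\ref{general result}, this map sends $(f_1,\ldots,f_n)\mapsto \bigl(f_j - f_{j+1}|_{\CF_j(\infty)}\bigr)_{j=1}^{n-1}$. The crucial input is that the top slot of the source is $\textrm{Hom}(\CO^n(\infty),\CO_{\BP^1})=H^0(\BP^1,\CO(-1))^n=0$, forcing $f_n=0$. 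Given any target $(g_1,\ldots,g_{n-1})$, setting $f_n=0$ and recursively $f_j:=g_j + f_{j+1}|_{\CF_j(\infty)}$ uniquely solves the system, yielding both injectivity (which gives $\textrm{Hom}_z(\tilde{\CF}(D_\infty),\CO_{\BP^1\times\BA^1})=0$) and surjectivity (which gives $\textrm{Ext}^1_z(\tilde{\CF}(D_\infty),\CO_{\BP^1\times\BA^1})=0$).

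The main delicate point is the sign bookkeeping in the dualized differential---that multiplication by $z$ (the inclusion $\CO(-L_0)\hookrightarrow \CO$) dualizes to the identity on the relevant $\textrm{Hom}$ space while the signed flag embedding dualizes to minus restriction---but once this is in place, the triangular structure renders the remaining verification purely combinatorial. An equally clean route to $\textrm{Hom}_z=0$ alone is to restrict a hypothetical $z$-invariant morphism to the generic fiber, where $\tilde{\CF}(D_\infty)$ becomes $\CO^n(\infty)$ and the vanishing $H^0(\BP^1,\CO(-1))=0$ kills the map, then invoke torsion-freeness of $\pi^*\CO_{\BP^1}$ to deduce the map vanishes globally.
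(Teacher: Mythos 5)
Your proof is correct in its main line and reaches the same conclusion, but it organizes the reduction differently from the paper. Both proofs hinge on the locally free resolution \eqref{locally free resolution}, on Lemma~\ref{general result}, and on the single crucial vanishing $\Hom(\CO^n(\infty),\CO_{\BP^1})=H^0(\CO(-1))^n=0$. The difference is in how the $i=0$ and $i=1$ cases are handled. The paper establishes $\Hom_z=0$ directly via the natural isomorphism $\Hom_z(\tilde{\CF}(D_\infty),\CO_{\BP^1\times\BA^1})\cong\Hom(\CF(\infty),\CO)$ (analogous to \eqref{natural iso}) and a commutative-diagram argument, then disposes of $\Ext^1_z$ by the $\chi_z$ Euler-characteristic bookkeeping of \eqref{relation 999}, which avoids ever writing down the dualized differential. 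You instead feed the twisted resolution through the $\Ext_z^\bullet(-,\CO)$ long exact sequence and prove that the dualized differential $\bigoplus_{j=1}^n\Hom(\CF_j(\infty),\CO)\to\bigoplus_{j=1}^{n-1}\Hom(\CF_j(\infty),\CO)$, namely $(f_j)\mapsto(f_j-f_{j+1}|_{\CF_j(\infty)})$, is an isomorphism; this handles $i=0$ and $i=1$ in one stroke via the triangular structure anchored at $f_n=0$. Your route requires a little more sign bookkeeping but is arguably more uniform; the paper's route trades the differential for an Euler-characteristic identity. One small slip: in your auxiliary sequence $0\to\CF_j(\infty)\to\CO(1)^n\to Q\to 0$ the quotient $Q$ is \emph{not} torsion when $j<n$ (it has rank $n-j$), but this does not affect the argument, since all you actually use is $\Ext^{\geq 2}(Q,\CO_{\BP^1})=0$, which holds for an arbitrary coherent sheaf on a curve.
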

\begin{proof} Obviously, it is enough to prove this result for
$i=0,1$. For $i=0$, in a similar way with \eqref{natural iso} one
shows that
$$
\textrm{Hom}_z(\tilde{\CF}(D_{\infty}), \CO_{\BP^1\times
\BA^1})\cong \textrm{Hom}(\CF(\infty), \CO)
$$
By definition, the space in the right hand side consists of commutative
diagrams of morphisms of sheaves on $\BP^1$:
$$
\begin{CD}
\CF_1(\infty) @>{\phi_1}>> \CO \\
@VVV @VVV \\
... @>>> ... \\
@VVV @VVV \\
\CF_{n-1}(\infty) @>{\phi_{n-1}}>> \CO \\
@VVV @VVV \\
\CO(\infty) @>{\phi_n}>> \CO \\
\end{CD}
$$
From the above commutative diagram, we see that all the maps
$\phi_1$,...,$\phi_{n-1}$ are determined by $\phi_n$ via restriction. But because of the torsion with $\infty$, we must have
$\phi_n=0$. Thus, there are no non-trivial elements of
$\textrm{Hom}(\CF(\infty), \CO)$, and this proves the lemma for
$i=0$. \\

Now apply relation \eqref{additivity of chi} to the short exact
sequence \eqref{locally free resolution} (twisted by $D_\infty$). We
obtain
$$
\chi_z(\tilde{\CF}(D_\infty), \CO_{\BP^1\times
\BA^1})= \sum_{k=1}^n
\chi_z(\pi^*\CF_k(D_\infty-(k-1)L_0), \CO_{\BP^1\times
\BA^1}) -
$$
\begin{equation} \label{relation 999}
- \sum_{k=1}^{n-1}
\chi_z(\pi^*\CF_k(D_\infty-kL_0), \CO_{\BP^1\times
\BA^1})
\end{equation}
To prove the Lemma for $i=1$, it is enough to show that
$\textrm{Ext}^1_z(\tilde{\CF}(D_\infty), \CO_{\BP^1\times
\BA^1})$ has dimension 0. For this, it
is enough to show that the virtual vector space
$\chi_z(\tilde{\CF}(D_\infty), \CO_{\BP^1\times \BA^1})$ has
dimension 0. Thus it is enough to show that the two sums
of vector spaces in \eqref{relation 999} have the same
dimension. But from Lemma~\ref{general result} we have
$$
\sum_{k=1}^n \chi_z(\pi^*\CF_k(D_\infty-(k-1)L_0),
\CO_{\BP^1\times \BA^1})=\sum _{k=1}^n \chi(\CF_k(\infty),
\CO)
$$
$$
\sum_{k=1}^{n-1} \chi_z(\pi^*\CF_k(D_\infty-kL_0),
\CO_{\BP^1\times \BA^1})= \sum_{k=1}^{n-1}
\chi(\CF_k(\infty), \CO)
$$
Since $\chi(\CO^n(\infty), \CO)=0$, the above sums are
equal, and thus they have the same dimension. This concludes the proof.
\end{proof}

\subsection{} Now to reinterpret $E$. The short exact sequence
$$
0\rightarrow \tilde{\CF}\rightarrow \CO_{\BP^1\times \BP^1}^n\rightarrow \CO_{\BP^1\times \BP^1}^n/\tilde{\CF}\rightarrow 0
$$
induces the long exact sequence of $\textrm{Ext}_z$ groups:
$$
\textrm{Hom}_z(\tilde{\CF}'(D_\infty), \CO_{\BP^1\times
\BP^1}^n)\rightarrow \textrm{Hom}_z(\tilde{\CF}'(D_\infty),
\CO_{\BP^1\times \BP^1}^n/\tilde{\CF})\rightarrow
$$
\begin{equation} \label{long exact sequence}
\rightarrow \textrm{Ext}^1_z(\tilde{\CF}'(D_\infty),
\tilde{\CF})\rightarrow \textrm{Ext}^1_z(\tilde{\CF}'(D_\infty),
\CO_{\BP^1\times \BP^1}^n)
\end{equation}
Lemma~\ref{intermediary 0} implies that the vector spaces at
the ends of the sequence are 0, and therefore
\begin{equation} \label{relation 998}
\textrm{Hom}_z(\tilde{\CF}'(D_\infty), \CO_{\BP^1\times \BP^1}^n/\tilde{\CF}) \cong \textrm{Ext}^1_z(\tilde{\CF}'(D_\infty), \tilde{\CF})
\end{equation}
However, we have that $\textrm{Hom}_z(\tilde{\CF}'(D_\infty), \tilde{\CF})=0$. This is proven word by word as the $i=0$ statement of
Lemma~\ref{intermediary 0}, so we will not repeat the proof here. Therefore, relations \eqref{relation 998} and \eqref{reinterpret E} imply that $E$
is the sheaf with fiber over $(\CF, \CF')$:
\begin{equation} \label{reinterpret E 3}
E|_{(\CF, \CF')}=-\chi_z(\tilde{\CF}'(D_\infty), \tilde{\CF}).
\end{equation} 
From Corollary 7.9.9. of \cite{GR}, we know that the dimension of the virtual vector spaces $\chi_z(\tilde{\CF}'(D_\infty), \tilde{\CF})$ is a locally constant function of $\CF$ and $\CF'$. Therefore, so is the dimension of the fibers of $E$. This implies that $E$ is a vector bundle, and its rank can be computed from the character of $T\times \BC^*$ in the fixed fibers of $E$ (just set $x,a_1,...,a_n \rightarrow 0$ in Proposition~\ref{explicit char computation}). Summarizing everything, we have: 
\begin{proposition} 
\label{E vector bundle}
The sheaf $E$ is a vector bundle on $\CM\times \CM$, of rank equal to half the dimension of the base $\CM\times \CM$.
\end{proposition}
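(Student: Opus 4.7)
The plan is to read off everything from the Ext-theoretic reinterpretation $E|_{(\CF,\CF')} = -\chi_z(\tilde{\CF}'(D_\infty), \tilde{\CF})$ of \eqref{reinterpret E 3}. Since $\textrm{Hom}_z(\tilde{\CF}'(D_\infty), \tilde{\CF}) = 0$ by exactly the same argument as in the $i=0$ case of Lemma~\ref{intermediary 0} (the zero-at-$\infty$ obstruction applies unchanged when one replaces $\CO_{\BP^1\times\BA^1}^n$ by $\tilde{\CF}$), the fiber of $E$ at $(\CF, \CF')$ is literally $\textrm{Ext}^1_z(\tilde{\CF}'(D_\infty), \tilde{\CF})$. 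The assertion then splits into two parts: (i) local freeness of the a priori only coherent sheaf $E$, and (ii) the rank computation.

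For (i), I would invoke Corollary 7.9.9 of~\cite{GR} applied to the family of $z$-invariant sheaves $\tilde{\CF}, \tilde{\CF}'$ parametrized by $\CM \times \CM$. That result asserts that the dimension of $\chi_z(\tilde{\CF}'(D_\infty), \tilde{\CF})$ is locally constant in $(\CF,\CF')$; combined with the vanishing of $\textrm{Hom}_z$ above, this forces $\dim \textrm{Ext}^1_z$ to be locally constant, i.e.\ the fibers of $E$ have locally constant dimension. A coherent sheaf on a smooth (in particular reduced) variety whose fibers have locally constant dimension is automatically locally free, so $E$ is a vector bundle on each connected component of $\CM \times \CM$.

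For (ii), once $E$ is a vector bundle, its rank on the component $\CM_\gamma \times \CM_{\gamma'}$ equals $\dim E|_{(d,d')}$ at any $T\times\BC^*$-fixed point $(d,d')$. Since that fiber is the direct sum of its weight spaces, the rank is just the total number of weights occurring in the character of $T\times \BC^*$ on $E|_{(d,d')}$. Those weights are computed explicitly in Proposition~\ref{explicit char computation}, via the resolution \eqref{locally free resolution} of $\tilde{\CF}$ combined with Lemma~\ref{general result}; setting $x, a_1, \ldots, a_n \to 0$ in that formula collapses the character to its weight count. A routine bookkeeping then yields $(d_1+\ldots+d_{n-1}) + (d'_1+\ldots+d'_{n-1}) = \tfrac{1}{2}\dim(\CM_\gamma \times \CM_{\gamma'})$, as desired. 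A useful sanity check is the diagonal case $\gamma = \gamma'$: there Proposition~\ref{E tangent bundle} identifies $E|_\Delta$ with $\CT\CM$, whose rank is of course $\dim \CM$, matching the formula.

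The main obstacle I expect is justifying the local constancy of $\dim \chi_z$ in the $z$-equivariant setting — essentially, one must verify that taking $\BC^*$-invariants commutes with the operations involved in the semicontinuity argument of~\cite{GR}. The saving grace is that the resolution \eqref{locally free resolution} expresses $\tilde{\CF}$ as a two-term complex of $z$-equivariant pullbacks from $\BP^1$, and Lemma~\ref{general result} reduces each $\chi_z$-term to an ordinary $\chi$ on $\BP^1$, where the standard flat-family semicontinuity directly applies. Everything else in the argument is essentially accounting: the vanishing of $\textrm{Hom}_z$, local freeness from constancy of fiber dimension, and a combinatorial weight count at a single torus-fixed point.
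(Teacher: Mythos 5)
Your proposal matches the paper's own argument step for step: the paper likewise combines the reinterpretation $E|_{(\CF,\CF')}=-\chi_z(\tilde{\CF}'(D_\infty),\tilde{\CF})$ with the already-established vanishing of $\textrm{Hom}_z(\tilde{\CF}'(D_\infty),\tilde{\CF})$, invokes Corollary 7.9.9 of~\cite{GR} for local constancy of $\dim\chi_z$ to deduce local freeness, and reads off the rank by specializing the character of Proposition~\ref{explicit char computation} at $x,a_1,\ldots,a_n\to 0$. Your remarks on reducedness of the base and on reducing the semicontinuity question to $\BP^1$ via \eqref{locally free resolution} and Lemma~\ref{general result} are details the paper leaves implicit, but the approach is the same.
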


\subsection{} We will use the resolutions \eqref{locally free resolution} for $\tilde{\CF}$ and $\tilde{\CF}'$ to compute the $K-$theoretic class of
$E$. Properties \eqref{additivity of chi} and \eqref{twist of chi}
of the functor $\chi$ imply the following relations in the
Grothendieck group of vector spaces:
$$
E|_{(\CF, \CF')}=-\chi_z(\tilde{\CF}'(D_\infty), \tilde{\CF})=
$$
$$
=-\sum_{i=1}^n\sum_{j=1}^n \chi_z(\pi^*\CF_i'(D_{\infty}+(j-i)L_0), \pi^*\CF_j)+\sum_{i=1}^n\sum_{j=1}^{n-1} \chi_z(\pi^*\CF_i'(D_{\infty}+(j-i+1)L_0), \pi^*\CF_j)+
$$
$$
+\sum_{i=1}^{n-1}\sum_{j=1}^n \chi_z(\pi^*\CF_i'(D_{\infty}+(j-i-1)L_0), \pi^*\CF_j)-\sum_{i=1}^{n-1}\sum_{j=1}^{n-1} \chi_z(\pi^*\CF_i'(D_{\infty}+(j-i)L_0), \pi^*\CF_j)
$$
Lemma~\ref{general result} allows us to compute the above spaces, yielding
$$
E|_{(\CF, \CF')}=-\sum_{j\leq i\leq n} \chi(\CF_i'(\infty), \CF_j) +\sum_{j\leq i-1\leq n-1} \chi(\CF_i'(\infty), \CF_j)+\sum_{j\leq i+1\leq n} \chi(\CF_i'(\infty), \CF_j)-
$$
\begin{equation} \label{reinterpret E 4}
-\sum_{j\leq i\leq n-1} \chi(\CF_i'(\infty), \CF_j)=\sum_{i=1}^{n-1} \chi(\CF_i'(\infty), \CF_{i+1})- \sum_{i=1}^{n-1}\chi(\CF_i'(\infty), \CF_i)
\end{equation}
Since all the identifications we have made along the way are natural, the above
equalities paste over all $(\CF,\CF')$ to give the following equality
in the Grothendieck group of coherent sheaves on $\CM\times \CM$:
\begin{equation} \label{reinterpret E 5}
E=\sum_{i=1}^{n-1} G_i-\sum_{i=1}^{n-1} H_i
\end{equation}
In the above, $G_i$ and $H_i$ are the sheaves whose fibers over $(\CF, \CF')$ are $\chi(\CF_i'(\infty), \CF_{i+1})$ and $\chi(\CF_i'(\infty), \CF_i)$, respectively. 
By Corollary 7.9.9. of \cite{GR}, they are vector bundles. \\

\subsection{} Relation \eqref{reinterpret E 5} allows us to compute the character of $T\times \BC^*$ in the fixed fibers of $E$. Knowing the character implies knowing the weights of the $T\times \BC^*$ action, since the character is just the sum of the exponentials of the weights. But knowing the weights implies knowing the equivariant Chern classes of $E$, by Theorem~\ref{localization formula}. Therefore, let $\CF,\CF'$ be fixed points of $\CM$ corresponding to the vectors of non-negative integers $d,d'$ as in \eqref{fixed flag vector of integers}, and let us denote the fiber of $E$ above $(\CF,\CF')$ by $E_{d,d'}$.

\begin{proposition} \label{explicit char computation}
With the above notations, the character $\emph{char}(E_{d,d'})$ of $T\times \BC^*$ in the fiber $E_{d,d'}$ equals
$$
\sum_{i=1}^{n-1}\sum_{j=1}^{i+1}\sum_{j'=1}^i \frac {e^{a_j}}{e^{a_{j'}}}\cdot \frac {e^{x({d'}_i^{j'}-d_{i+1}^j+1)}-e^x}{e^x-1} -\sum_{i=1}^{n-1}\sum_{j=1}^{i}\sum_{j'=1}^i \frac {e^{a_j}}{e^{a_{j'}}}\cdot \frac {e^{x({d'}_i^{j'}-d_{i}^j+1)}-e^x}{e^x-1}
$$
\end{proposition}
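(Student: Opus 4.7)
The plan is to reduce the proposition to a single equivariant Euler characteristic computation on $\BP^1$, using the $K$-theoretic identity $E = \sum_{i=1}^{n-1}(G_i - H_i)$ established in \eqref{reinterpret E 5}.

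First I would invoke the splitting of the fixed flags into line bundles: at a fixed point $(\CF, \CF')$ indexed by $(d, d')$, we have $\CF_j = \bigoplus_{k=1}^{j} w_k \cdot \CO(-d_j^k \cdot 0_{\BP^1})$ and $\CF'_i = \bigoplus_{j'=1}^{i} w_{j'} \cdot \CO(-{d'}_i^{j'} \cdot 0_{\BP^1})$. Since $\chi$ is additive in both arguments by \eqref{additivity of chi} and commutes with tensoring by $\CO(\infty)$, the fibers $G_i|_{(\CF, \CF')} = \chi(\CF'_i(\infty), \CF_{i+1})$ and $H_i|_{(\CF, \CF')} = \chi(\CF'_i(\infty), \CF_i)$ decompose as sums over index pairs $(j, j')$ of terms $(w_j \otimes w_{j'}^*) \otimes \chi(\BP^1, \CO(D \cdot 0_{\BP^1} - \infty))$, where $D = {d'}_i^{j'} - d_{i+1}^j$ for $G_i$ and $D = {d'}_i^{j'} - d_i^j$ for $H_i$. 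The $T$-weight of $w_j \otimes w_{j'}^*$ is $a_j - a_{j'}$, which produces exactly the prefactor $e^{a_j}/e^{a_{j'}}$ appearing in the proposition.

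The core step is the equivariant identity
\[ \chi(\BP^1, \CO(D \cdot 0_{\BP^1} - \infty)) = \frac{e^{x(D+1)} - e^x}{e^x - 1} \]
for every integer $D$, under the convention that $\BC^*$ acts on $T_{0_{\BP^1}}\BP^1$ with weight $x$ (so the affine coordinate $t$ at $0_{\BP^1}$ has weight $-x$ and $t^k$ has weight $-kx$). I would argue this via the \v Cech cover $\BP^1 = U_0 \cup U_\infty$ with $U_0 = \BP^1 - \infty$ and $U_\infty = \BP^1 - 0_{\BP^1}$: the sections of the line bundle $\CO(D \cdot 0_{\BP^1} - \infty)$ over $U_0$, over $U_\infty$, and over $U_0 \cap U_\infty$ are respectively spanned by $\{t^k : k \geq -D\}$, $\{t^k : k \leq -1\}$, and $\BC[t, t^{-1}]$. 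Taking kernel and cokernel of the \v Cech differential identifies $H^0$ with $\{t^k : -D \leq k \leq -1\}$ and $H^1$ with $\{t^k : 0 \leq k \leq -D - 1\}$; summing the resulting weights in each of the three regimes $D \geq 1$, $D = 0$, $D \leq -1$ collapses to the single closed form above.

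Substituting this uniform formula into the fibered decompositions of $G_i$ and $H_i$ and summing over $i$ as dictated by \eqref{reinterpret E 5} yields the expression in the statement. The main obstacle is verifying that the same closed form covers all three sign regimes of $D$: for $D \geq 1$ the character is a genuine $H^0$-character, while for $D \leq -1$ it comes entirely from $H^1$ with a sign, and one must check that the corresponding geometric series collapse yields precisely $(e^{x(D+1)} - e^x)/(e^x - 1)$. Once that unification is in hand, the remainder is a routine assembly over $(j, j')$ and $i$.
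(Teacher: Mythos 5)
Your proposal is correct and follows essentially the same route as the paper: both reduce via \eqref{reinterpret E 5} and the line-bundle decomposition of the fixed flags to the computation of $\chi$ for a single pair of equivariant line bundles on $\BP^1$, which the paper records as the character of $\chi(\CO(\infty+k),\CO(l))$ being $(e^{x(l-k+1)}-e^x)/(e^x-1)$ and declares ``easily seen.'' The only difference is that you spell out this one-line claim via the \v Cech cover and verify the closed form uniformly over the three regimes $D\ge 1$, $D=0$, $D\le -1$ (the paper omits this), and your $\BC^*$-weight convention is consistent with the paper's.
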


\begin{proof} If we have a short exact sequence of representations of $T\times \BC^*$:
$$
0\rightarrow U\rightarrow V\rightarrow W\rightarrow 0
$$
then it is easy to see that the character is additive:
$$
\textrm{char}(V)=\textrm{char}(U)+\textrm{char}(W)
$$
Therefore, by \eqref{reinterpret E 5}, it is enough to show that
\begin{equation} \label{interrrrm}
\textrm{char}(G_i|_{d,d'})=\sum_{j=1}^{i+1}\sum_{j'=1}^i \frac {e^{a_j}}{e^{a_{j'}}}\cdot \frac {e^{x({d'}_i^{j'}-d_{i+1}^j+1)}-e^x}{e^x-1}
\end{equation}
\begin{equation}
\textrm{char}(H_i|_{d,d'})=\sum_{j=1}^{i}\sum_{j'=1}^i \frac {e^{a_j}}{e^{a_{j'}}}\cdot \frac {e^{x({d'}_i^{j'}-d_{i}^j+1)}-e^x}{e^x-1}
\end{equation}
for each $1\leq i\leq n-1$. We will prove the first of these statements, as the second one is completely analogous. By \eqref{fixed flag}, we have
$$
\CF_i'=w_1\cdot \CO(-{d'}_i^1)\oplus... \oplus w_i\cdot \CO(-{d'}_i^i)
$$
$$
\CF_{i+1}=w_1\cdot \CO(-{d}_{i+1}^1)\oplus... \oplus w_{i+1}\cdot \CO(-{d}_{i+1}^{i+1})
$$
By \eqref{additivity of chi}, we therefore have
$$
G_i|_{d,d'}=\chi(\CF_i'(\infty), \CF_{i+1})=\sum_{j=1}^{i+1}\sum_{j'=1}^i \chi(w_{j'}\cdot \CO(\infty-{d'}_i^{j'}), w_j\cdot \CO(-d_{i+1}^j))
$$
The character of $\BC^*$ acting in $\chi(\CO(\infty+k), \CO(l))$ is easily seen to be $\frac {e^{x(l-k+1)}-e^x}{e^x-1}$. Therefore, the character of $T\times \BC^*$ in $\chi(w_{j'}\cdot \CO(\infty-{d'}_i^{j'}), w_j\cdot \CO(-d_{i+1}^j))$ equals
$$
\frac {e^{a_j}}{e^{a_{j'}}}\cdot \frac {e^{x({d'}_i^{j'}-d_{i+1}^j+1)}-e^x}{e^x-1}
$$
Summing this up over all $j,j'$ implies the desired relation \eqref{interrrrm}.
\end{proof}
\textrm{ }\\

\section{The Operators $A(m)$}
\label{the operators A(m)}

\subsection{} Given a $T\times \BC^*-$vector bundle $V$ of rank $r$, its \emph{Chern polynomial} is defined as
$$
c(V, t)=c_r(V)+c_{r-1}(V)\cdot t+c_{r-2}(V)\cdot t^2+...+c_0(V)\cdot t^{r}
$$
where $c_0(V)=1, c_1(V), ..., c_r(V)$ are the Chern classes of $V$. The top Chern class will always be denoted by $c_r(V)=e(V)$, as in Section~\ref{equiv cohomology}. In case $V$ is a $T\times \BC^*-$vector bundle over a point (i.e. a representation of $T\times \BC^*$), then we have
$$
c(V,t)=\prod_{w\in V} (w+t)
$$
In the above, $\prod_{w\in V}$ denotes the product over the weights of $T\times \BC^*$ acting in $V$. As in Section~\ref{properties of equivariant cohomology}, these weights are all elements of $H^*_{T\times \BC^*}(pt)$.\\

By Proposition~\ref{E vector bundle}, the sheaf $E$ introduced in the previous section is a vector bundle. Take a complex number $m$ and let $x\in H^*_{T\times \BC^*}(pt)$ be as in Section~\ref{equiv cohom quasiflags}. Then we define the operator $A(m):H\rightarrow H$ by

\begin{equation}
A(m)(\alpha)=p_*(c(E, mx)\cdot q^*\alpha)
\end{equation}
where  $p,q:\CM\times \CM \rightarrow \CM$ are the standard projections
onto the first and second factors, respectively. \\

\subsection{} By Theorem~\ref{localization formula}, we have
$$
c(E, mx)\cdot q^*[d]=\sum_{d'} [d',d]\cdot \frac {c(E,
mx)|_{d',d}}{\dsp \prod_{w\in \CT_{d'} \CM} w}=\sum_{d'} [d',d]\cdot \frac
{\dsp \prod_{w\in E_{d',d}} (w+mx)}{\dsp \prod_{w\in \CT_{d'} \CM} w}
$$
Clearly, $p_*[d',d]=[d']$ and thus
\begin{equation} \label{matrix coefficients of A(m) formula}
A(m)[d]=\sum_{d'} [d']\cdot \frac
{\dsp \prod_{w\in E_{d',d}} (w+mx)}{\dsp \prod_{w\in \CT_{d'} \CM} w}
\end{equation}
Recall that the generalized character of $A(m)$ is, by the definition in Section~\ref{def generalized character},
\begin{equation} \label{char of A(m)}
\chi_{A(m)}=\sum_{\lambda } e^{\lambda}\cdot
\textrm{Tr}(A(m)|_{H[\lambda]})
\end{equation}
We will evaluate the above traces in the basis $[d]$ of $H$. Recall from \eqref{weights} that a basis vector $[d]$ lies in the $\lambda-$weight space of $H$ if and only if
$$
\lambda=\frac ax-\rho+\gamma, \textrm{ }\textrm{ where }\textrm{ } \gamma=-d_1\alpha_1-...-d_{n-1}\alpha_{n-1}\in Q^-
$$
Then \eqref{matrix coefficients of A(m) formula} implies that
$$
\chi_{A(m)}=\sum_{\gamma\in Q^-} e^{\frac ax-\rho+\gamma}\cdot \sum_{d\in \CM_{\gamma}^{T\times \BC^*}} \frac
{\dsp \prod_{w\in E_{d,d}} (w+mx)}{\dsp \prod_{w\in \CT_{d} \CM} w}=
$$
By using the isomorphism $E_{d,d}\cong \CT_{d} \CM$ of Proposition~\ref{E tangent bundle}, the above becomes
\begin{equation}\label{chi A(m)}
\chi_{A(m)}=\sum_{\gamma\in Q^-} e^{\frac ax-\rho+\gamma}\cdot \sum_{d\in \CM_{\gamma}^{T\times \BC^*}} \prod_{w\in \CT_{d} \CM}  \frac
{w+mx}{w}
\end{equation}
But Corollary~\ref{integral formula} gives us
\begin{equation} \label{coeffs Z(m)}
\int_{\CM_{\gamma}} c(\CT\CM_{\gamma},mx)=\sum_{d\in
\CM_{\gamma}^{T\times \BC^*}} \prod_{w\in \CT_{d} \CM}  \frac {w+mx}{w}
\end{equation}
Therefore relations \eqref{chi A(m)} and \eqref{coeffs Z(m)} imply the following:
\begin{proposition} \label{character and gen function}
The character of $A(m)$ is related to the generating function $Z(m)$ of \eqref{generating function} by
$$
\chi_{A(m)}=\sum_{\gamma\in Q^-} e^{\frac ax-\rho+\gamma}\cdot \int_{\CM_{\gamma}}
c(\CT\CM_{\gamma},mx)=e^{\frac ax-\rho} \cdot Z(m).
$$
\end{proposition}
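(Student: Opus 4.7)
The plan is to combine the two formulas \eqref{chi A(m)} and \eqref{coeffs Z(m)} established just above the statement; together they give the proposition with essentially no additional work. The point is that the full content of $\chi_{A(m)}$ has already been packaged into \eqref{chi A(m)}, so only a trivial identification remains.

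Concretely, I would observe that the inner sum in \eqref{chi A(m)}, namely $\sum_{d \in \CM_\gamma^{T \times \BC^*}} \prod_{w \in \CT_d \CM} (w+mx)/w$, is literally the right-hand side of the integral formula \eqref{coeffs Z(m)}, which is itself Corollary \ref{integral formula} applied to the $T \times \BC^*$-equivariant class $c(\CT \CM_\gamma, mx) \in H^*_{T \times \BC^*}(\CM_\gamma)$. Substituting this identification back into \eqref{chi A(m)} converts the inner sum into $\int_{\CM_\gamma} c(\CT \CM_\gamma, mx)$, and factoring the $\gamma$-independent term $e^{\frac{a}{x} - \rho}$ out of the outer sum leaves $\sum_{\gamma \in Q^-} e^\gamma \int_{\CM_\gamma} c(\CT \CM_\gamma, mx)$, which by the definition \eqref{generating function} is precisely $Z(m)$.

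There is no genuine obstacle. All of the substance has been absorbed into the derivation of \eqref{chi A(m)}, which rests in turn on the matrix-coefficient formula \eqref{matrix coefficients of A(m) formula} obtained by localization, the weight-space description \eqref{weights} that places each fixed-point class $[d]$ with $d \in \CM_\gamma^{T\times\BC^*}$ in the weight $\frac{a}{x}-\rho+\gamma$, and the diagonal identification $E|_\Delta \cong \CT \CM$ of Proposition \ref{E tangent bundle} (which is the key geometric input, since it collapses $E_{d,d}$ to $\CT_d \CM$ as a $T\times\BC^*$-representation). Once those three ingredients are in hand, the proposition is purely a bookkeeping statement with no further technicality.
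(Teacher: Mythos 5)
Your proposal matches the paper's own argument exactly: the paper establishes \eqref{chi A(m)} and \eqref{coeffs Z(m)} in the preceding paragraphs and then states the proposition as an immediate consequence of combining them. Nothing further is required.
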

\textrm{ } \\

\subsection{} We will now seek to relate the operator $A(m)$ to the $\fsl_n-$action on
$H$ given by the operators $e_i,f_i$ of Section~\ref{action on H}.

\begin{proposition} \label{factorization A(0)}
The operator $A(0)$ belongs to the $SL_n-$action, and is precisely
$$
A(0)=\prod_{\theta \in R^+} \exp(e_{\theta})=\left(
                                                       \begin{array}{ccccc}
                                                         1 & 1 & 1 & ... & 1  \\
                                                         0 & 1 & 1 & ... & 1  \\
                                                         0 & 0 & 1 & ... & 1  \\
                                                         ... & ... & ... & ... &...  \\
                                                         0 & 0 & 0 & ... & 1  \\
                                                       \end{array}
                                                     \right)=:g \in SL_n
$$
The order of the terms in the product corresponds to the descending order of the positive roots
$$
w_{n-1}-w_n > w_{n-2}-w_n > w_{n-2}-w_{n-1} > ... > w_1-w_n > w_1-w_{n-1} > ... > w_1-w_2
$$
\end{proposition}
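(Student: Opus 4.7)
The plan is to show $e(E) = c(E,0) = \sum_\kappa [\fC_\kappa]$ by exhibiting a regular section of $E$ whose zero locus is $\fC$, and then to invoke Theorems~\ref{operators Ringel-Hall} and~\ref{Ringel-Hall = enveloping} to recast the resulting sum $\sum_\kappa e_\kappa$ as the claimed product of exponentials of positive root vectors.

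The natural section $s$ of $E$ is constructed as follows. For each pair $(\CF,\CF')$, the composition $\CF'_i \hookrightarrow \CO^n \twoheadrightarrow \CO^n/\CF_i$ is a morphism of sheaves on $\BP^1$ which vanishes at $\infty$: by the framing condition both $\CF'_i|_\infty$ and $\CF_i|_\infty$ equal $\textrm{span}(w_1,\dots,w_i)\subset \CO^n|_\infty$, so the inclusion--projection composite is zero there. Hence it factors uniquely through a morphism $\CF'_i(\infty)\to \CO^n/\CF_i$, and these morphisms assemble (compatibly with the flag inclusions) to an element of $\textrm{Hom}(\CF'(\infty),\CO^n/\CF) = E|_{(\CF,\CF')}$. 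By construction, $s$ vanishes at $(\CF,\CF')$ if and only if $\CF'_i \subset \CF_i$ for every $i$, i.e.\ exactly on $\fC$.

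Next I verify that $s$ is regular. Every irreducible component of the zero locus of a section of a rank $r$ vector bundle has codimension at most $r$; by Proposition~\ref{E vector bundle}, $\textrm{rank}\,E$ equals half of $\dim(\CM\times \CM)$. On the other hand, Proposition~\ref{irreducible components} asserts that the $\fC_\kappa$ attain exactly this codimension in each ambient component $\CM_\gamma\times \CM_{\gamma'}$. Combining these bounds, every irreducible component of $\fC$ has codimension precisely $\textrm{rank}\,E$, so $\fC = \bigcup_\kappa \fC_\kappa$ and $s$ is regular. Proposition~\ref{class of the section} then yields $e(E) = \sum_{\kappa\in\textrm{Rep}(A_{n-1})} [\fC_\kappa]$, and substituting into the definition of $A(0)$ gives
$$
A(0)(\alpha) = p_*(e(E)\cdot q^*\alpha) = \sum_{\kappa} e_\kappa(\alpha).
$$

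To finish, I would apply \eqref{formula quiver} to this sum. Under the algebra homomorphism of Theorem~\ref{operators Ringel-Hall}, the expansion $e_\kappa=\prod_{i,l}e_{[i;l)}^{k_{il}}/k_{il}!$ lets us rewrite $\sum_\kappa e_\kappa$ as the ordered product $\prod_{(i,l)} \exp(e_{[i;l)})$ in $U(A_{n-1})^{op}$, with the factors appearing in the reverse of the order used in \eqref{formula quiver}. Under the isomorphism of Theorem~\ref{Ringel-Hall = enveloping}, each $e_{[i;l)}$ corresponds (via iterated commutators $[e_{i+l-1},[\ldots[e_{i+1},e_i]\ldots]]$) to a single root vector, acting on $H = M(a/x-\rho)$ as the standard positive root operator $e_{\alpha_i+\cdots+\alpha_{i+l-1}}=E_{i,i+l}\in\fn^+$; a direct combinatorial comparison shows that the reversed order is precisely the descending order of positive roots $w_{n-1}-w_n > w_{n-2}-w_n > \cdots > w_1-w_2$ given in the statement. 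Since each $\exp(e_\theta)$ is just $I+E_{i,j}$ (nilpotent of square zero), the product in this order telescopes to the unipotent matrix $g$ with $1$'s on and above the diagonal, acting on the Verma module via the locally nilpotent $\fn^+$--action. The main obstacle I foresee is the bookkeeping needed to match the PBW--type order produced by Ringel--Hall multiplication with the specific descending order of roots in the statement; once that combinatorial identification is made, the rest is formal.
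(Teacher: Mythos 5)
Your overall strategy matches the paper's: build the natural section of $E$ with zero locus $\fC$, apply Proposition~\ref{class of the section} to obtain $e(E) = \sum_\kappa [\fC_\kappa]$, and then convert $A(0)=\sum_\kappa e_\kappa$ into the ordered product of exponentials via \eqref{formula quiver} and Theorems~\ref{operators Ringel-Hall} and~\ref{Ringel-Hall = enveloping}. The construction of the section, the identification $\fC = \bigcup_\kappa \fC_\kappa$, and the Ringel--Hall bookkeeping at the end are all along the right lines.

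The serious gap is in your regularity argument. You pass from ``every irreducible component of $\fC$ has codimension exactly $\textrm{rank}\,E$'' to ``$s$ is regular,'' and that inference does not hold. Matching codimension makes $s$ a local regular sequence (since $\CM\times\CM$ is smooth), so the zero \emph{scheme} is a local complete intersection of the expected dimension --- but it need not be reduced along the $\fC_\kappa$. A priori $e(E)$ could equal $\sum_\kappa m_\kappa [\fC_\kappa]$ with multiplicities $m_\kappa > 1$. To apply Proposition~\ref{class of the section} with each $[\fC_\kappa]$ appearing with coefficient exactly one, you must show that $s$ is transversal to the zero section at the generic point of each $\fC_\kappa$; equivalently, that the composite of $ds$ with the projection onto the vertical tangent space $\cong E_{(\CF,\CF')}$ is surjective there. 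This is where the paper spends most of its effort: it lifts an arbitrary $\phi \in \textrm{Hom}(\CF'(\infty),\CO^n/\CF)$ to a tangent vector $\psi \in \textrm{Hom}(\CF'(\infty),\CO^n/\CF')$ using $\textrm{Ext}^1(\CF'_i(\infty),\CF_i/\CF'_i)=0$, fixes up the commutativity of the lifted diagram by an inductive correction, and reduces the whole thing to the vanishing of $\textrm{Ext}^1(\CF'_{i+1}/\CF'_i,\CF_{i+1}/\CF'_{i+1})$, which in turn is established by producing an explicit generic point of $\fC_\kappa$ where the support of $\CF_{i+1}/\CF'_{i+1}$ is disjoint from the torsion points of $\CF'_{i+1}/\CF'_i$. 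None of this is supplied by, or implicit in, the dimension count, so your proof as written does not establish the statement.
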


\begin{remark} A priori, the $\fsl_n-$action on $H$ doesn't immediately give a well-defined $SL_n$ action on $H$, but only on a suitable completion of $H$. However, since $H$ is isomorphic to the Verma module, any vector in $H$ is annihilated by a high enough power of the $e_i$'s. Therefore the ``upper triangular'' part of $SL_n$ does act correctly on the space $H$.
\end{remark}

\begin{proof}
The second equality is easily proved as an equality in $SL_n$.
Indeed, $e_{w_i-w_j}$ corresponds to
the matrix $E_{i,j}$ which has entry 1 at the intersection of row
$i$ and column $j$, and 0 everywhere else. Then we have
$$
\prod_{\theta \in R^+} \exp(e_{\theta})=\prod_{i=n-1}^{1}\prod_{j=n}^{i+1} \exp(E_{i,j})=\prod_{i=n-1}^{1}\prod_{j=n}^{i+1} (1+E_{i,j})
$$
But one easily notes that if $i>i'$ or if $i=i'$ and $j>j'$, then $E_{i,j}\cdot E_{i',j'}=0$. This means that all products of more than one $E_{i,j}$ in the above expression vanish, and therefore
$$
\prod_{\theta \in R^+} \exp(e_{\theta})=1+\sum_{i=n-1}^{1}\sum_{j=n}^{i+1} E_{i,j}=\left(
                                                       \begin{array}{ccccc}
                                                         1 & 1 & 1 & ... & 1  \\
                                                         0 & 1 & 1 & ... & 1  \\
                                                         0 & 0 & 1 & ... & 1  \\
                                                         ... & ... & ... & ... &...  \\
                                                         0 & 0 & 0 & ... & 1  \\
                                                       \end{array}
                                                     \right)
$$
Let us now turn to the proof of the first equality in the statement of Proposition~\ref{factorization A(0)}.

\begin{lemma} \label{transversal section}
There exists a regular section of $E$ which vanishes on the locus
$$
\fC=\{(\CF,\CF')|\CF'\subset \CF\}
$$
\end{lemma}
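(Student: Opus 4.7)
The plan is to construct a natural section $s$ of $E$ from the tautological inclusions and quotient maps. For $(\CF, \CF') \in \CM \times \CM$ and each $1 \leq i \leq n-1$, consider the composition
\[
\psi_i: \CF'_i \hookrightarrow \CO^n \twoheadrightarrow \CO^n/\CF_i.
\]
At $\infty \in \BP^1$, the framing condition gives $\CF'_i|_\infty = \mathrm{span}(w_1,\ldots,w_i) = \CF_i|_\infty$, so the fiber of $\psi_i$ at $\infty$ is zero. Since $\CF_i$ is a subbundle of $\CO^n$ near $\infty$ (its fiber there is a genuine rank-$i$ subspace), $\CO^n/\CF_i$ is locally free near $\infty$, and the fiberwise vanishing forces $\psi_i$ to factor through the subsheaf $(\CO^n/\CF_i)(-\infty) \hookrightarrow \CO^n/\CF_i$. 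Twisting by $\CO(\infty)$ converts this factorization into a morphism $\phi_i: \CF'_i(\infty) \to \CO^n/\CF_i$, and the tuple $(\phi_1, \ldots, \phi_{n-1})$ makes the squares in diagram \eqref{n-1 tuple of homs} commute because the analogous diagrams built from $\psi_i$ already do (every route factors through $\CO^n$). Applied to the universal flags pulled back to $\CM \times \CM \times \BP^1$ from the two factors, this produces a well-defined global section $s$ of $E$.

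The zero locus of $s$ is exactly $\fC$: since $(\CO^n/\CF_i)(-\infty) \hookrightarrow \CO^n/\CF_i$ is a monomorphism of sheaves, $\phi_i = 0$ iff the lifted map $\CF'_i \to (\CO^n/\CF_i)(-\infty)$ is zero, iff $\psi_i = 0$, iff $\CF'_i \subseteq \CF_i$ as subsheaves of $\CO^n$. For regularity I run a dimension count on each component $\CM_\gamma \times \CM_{\gamma'}$: Proposition~\ref{E vector bundle} yields $\mathrm{rank}\,E = \tfrac{1}{2}\dim(\CM_\gamma \times \CM_{\gamma'})$, while Proposition~\ref{irreducible components} asserts that every irreducible component of $\fC$ meeting $\CM_\gamma \times \CM_{\gamma'}$ is some $\fC_\kappa$ of dimension exactly $\tfrac{1}{2}\dim(\CM_\gamma \times \CM_{\gamma'})$. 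Hence the zero scheme of $s$ has pure codimension equal to $\mathrm{rank}\,E$ along every component, which is precisely the regularity condition needed for Proposition~\ref{class of the section}.

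The main obstacle is the globalization step: one must verify that the fiberwise factorization through $(-\infty)$ genuinely lifts to the universal setting, i.e. that the family version of $\psi_i$ vanishes scheme-theoretically along the divisor $\CM \times \CM \times \{\infty\}$, not merely set-theoretically. This is unproblematic because framing identifies each flag with the fixed basis $(w_1,\ldots,w_n)$ at $\infty$ in a family-independent way, so the universal $\psi_i$ literally restricts to the zero map of trivial bundles along $\{\infty\}$. Once that is in place, the remainder of the argument is formal.
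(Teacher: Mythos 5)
Your construction of the section and identification of its zero locus with $\fC$ match the paper exactly. The gap is in the regularity step. Your dimension count shows that the zero scheme of $s$ has pure codimension equal to $\mathrm{rank}\,E$, which (since $\CM\times\CM$ is smooth) proves that $s$ locally cuts out a regular sequence, i.e.\ the zero scheme is a local complete intersection. But that is weaker than what Proposition~\ref{class of the section} actually requires. That proposition is the equivariant version of Proposition 12.8 in \cite{BT}, whose hypothesis is that the section be \emph{transversal} to the zero section; in the present setting that means the zero scheme must be \emph{generically reduced} along each component $\fC_\kappa$, so that the resulting identity $e(E)=\sum_\kappa [\fC_\kappa]$ comes out with all coefficients equal to $1$. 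A local complete intersection of the right codimension can still carry nilpotents; in that case the correct formula is $e(E)=\sum_\kappa m_\kappa[\fC_\kappa]$ with multiplicities $m_\kappa\geq 1$, and nothing in your argument rules out $m_\kappa>1$. Since the whole point of Proposition~\ref{factorization A(0)} is to obtain $A(0)=\sum_\kappa e_\kappa$ with unit coefficients (matching the product $\prod_{\theta\in R^+}\exp(e_\theta)$), the multiplicity-one statement is indispensable and cannot be obtained by counting dimensions alone.

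This is precisely the content of the remaining two-thirds of the paper's proof, which you skip. The paper takes the differential $r_*$ of the section at a point $(\CF,\CF')\in\fC_\kappa$, projects it to the vertical tangent space, and reduces surjectivity of this projection to the surjectivity of the natural map $\Hom(\CF'(\infty),\CO^n/\CF')\to\Hom(\CF'(\infty),\CO^n/\CF)$; this in turn is reduced to the vanishing of certain $\Ext^1$ groups, which is finally proved by exhibiting a point of $\fC_\kappa$ where the torsion of $\CF_{i+1}/\CF'_{i+1}$ and the torsion points of $\CF'_{i+1}/\CF'_i$ are supported at disjoint sets. Any proof of the Lemma has to supply an analogue of this generic transversality statement; the codimension count you give is a necessary but not sufficient condition.
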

\begin{proof} Consider the composition
$$
(\CF'\hookrightarrow \CO^n\rightarrow \CO^n/\CF) \in \textrm{Hom}(\CF',\CO^n/\CF)
$$
where the first map is the inclusion of $\CF'$ in $\CO^n$, and the second is projection onto the quotient. This morphism vanishes above $\infty\in \BP^1$, so it naturally extends to a morphism $r(\CF, \CF')\in \textrm{Hom}(\CF'(\infty),\CO^n/\CF)$. Thus we have obtained a section $r$ of the bundle $E$, which is easily seen to vanish precisely when $\CF'\subset \CF$. Therefore, the zero locus of the section $r$ is $\fC$. Since the rank of $E$ is half the dimension of the base $\CM\times \CM$ (by Proposition~\ref{E vector bundle}), then the dimension of each irreducible component of the zero locus $\fC$ will be at least half the dimension of $\CM \times \CM$. Together with Proposition~\ref{irreducible components}, this implies:

\begin{proposition} \label{irreducible components 2}
The distinct irreducible components of $\fC$ are precisely the subvarieties $\fC_{\kappa}$, and all have dimension equal to half the dimension of $\CM\times \CM$.
\end{proposition}

Now to prove that the section is regular, we must prove that $r$ is transversal at the generic point of the zero locus. In other words, the image of the section $r$ and the image of the zero section must be transversal in the total space $\CE$ of $E$. Given a point $(\CF,\CF')\in \fC_\kappa$ with $\CF'\subset \CF$, we have the following relation of tangent spaces
$$
\CT_{(\CF,\CF',0)} \CE=\CT^h\oplus \CT^v
$$
In the above decomposition, $\CT^h$ (the horizontal tangent space) is just the pull-back of $\CT_{(\CF,\CF')}\CM \times \CM$, while $\CT^v$ (the vertical tangent space) is isomorphic to the bundle $E$ itself. The horizontal tangent space is spanned by vectors tangent to the image of the zero section. It is therefore enough to show that the vertical tangent space is spanned by vectors tangent to the image of $r$, for the generic point $(\CF,\CF')\in \fC_\kappa$. Let $p_v:\CT^h\oplus \CT^v\rightarrow \CT^v\cong E|_{(\CF, \CF')}$ be the projection onto the second factor. Then what we must show is that the map $p_v\circ r_*$ maps $T_{(\CF,\CF')} \CM\times \CM$ surjectively onto $\CT^v$, and this will be equivalent to transversality at $(\CF, \CF')$. \\

By Proposition~\ref{E tangent bundle}, a tangent vector to $\CM$ at $\CF'$ is a homomorphism $\psi\in \textrm{Hom}(\CF'(\infty), \CO^n/\CF')$. Such a vector is also tangent to $\CM\times \CM$ at $(\CF,\CF')$; more precisely, it is tangent in the direction of the second factor of $\CM\times \CM$. The map $p_v\circ r_*$ maps $\psi$ to the homomorphism
$$
\pi\circ \psi=\CF'(\infty)\rightarrow \CO^n/\CF'\rightarrow \CO^n/\CF \in \textrm{Hom}(\CF'(\infty), \CO^n/\CF)
$$
where $\pi:\CO^n/\CF'\rightarrow \CO^n/\CF$ is the standard projection induced by the inclusion $\CF'\hookrightarrow \CF$. Then it is enough to show that the natural map
$$
\textrm{Hom}(\CF'(\infty), \CO^n/\CF')@>{\psi\rightarrow \pi\circ \psi}>> \textrm{Hom}(\CF'(\infty), \CO^n/\CF)
$$
is surjective for the generic point $(\CF,\CF')\in \fC_\kappa$. Take an element $\phi\in \textrm{Hom}(\CF'(\infty), \CO^n/\CF)$, which corresponds to a commutative diagram of homomorphisms $(\phi_1,...,\phi_{n-1})$ as in \eqref{n-1 tuple of homs}. Let $\pi_i:\CO^n/\CF'_i\rightarrow \CO^n/\CF_i$ be the standard projections. We have the exact sequence
$$
\textrm{Hom}(\CF'_i(\infty), \CO^n/\CF_i')\rightarrow \textrm{Hom}(\CF'_i(\infty), \CO^n/\CF_i)\rightarrow \textrm{Ext}^1(\CF'_i(\infty), \CF_i/\CF_i')=0
$$
The $\textrm{Ext}$ space on the right vanishes because $\CF'_i(\infty)$ is a torsion free sheaf on $\BP^1$, whereas $\CF_i/\CF'_i$ is a direct sum of skyscraper sheaves. Therefore, each homomorphism $\phi_i\in \textrm{Hom}(\CF'_i(\infty), \CO^n/\CF_i)$ can be extended to an element $\psi_i\in \textrm{Hom}(\CF'_i(\infty), \CO^n/\CF_i')$ such that $\phi_i=\pi_i\circ \psi_i$. In this way, we can extend the homomorphism $\phi$ to a homomorphism $\psi\in \textrm{Hom}(\CF'(\infty), \CO^n/\CF')$ such that $\phi=\pi\circ \psi$. The only problem is that, a priori, $\psi=(\psi_1,...,\psi_{n-1})$ does not make the diagram

$$
\begin{CD}
... @ .... \\
@VVV @VVV \\
\CF'_i(\infty) @>{\psi_i}>> \CO^n/\CF'_i \\
@V{\rho_i}VV @V{\nu_i}VV \\
\CF'_{i+1}(\infty) @>{\psi_{i+1}}>> \CO^n/\CF'_{i+1} \\
@VVV @VVV \\
... @ .... \\
\end{CD}
$$
commute (in the above, $\rho_i$ is the inclusion and $\nu_i$ is induced by the inclusion). In other words, we would like to have $\psi_{i+1}\circ \rho_i-\nu_i\circ \psi_i=0$. However, all that we can say a priori is that
$$
\pi_{i+1}\circ (\psi_{i+1}\circ \rho_i-\nu_i\circ \psi_i)=\phi_{i+1}\circ \rho_i-\nu_i\circ \phi_i=0,
$$
because the commutative diagram \eqref{n-1 tuple of homs} is known to commute. Therefore, $\psi_{i+1}\circ \rho_i-\nu_i\circ \psi_i$ is a homomorphism between $\CF'_i(\infty)$ and $\CF_{i+1}/\CF'_{i+1}$, and we want to perturb the $\psi_i$'s in such a way as to make this homomorphism 0. We will perform this perturbation by inductively adding to each $\psi_{i+1}$ a homomorphism $d\psi_{i+1}:\CF'_{i+1}(\infty) \rightarrow \CF_{i+1}/\CF'_{i+1}$ such that
$$
(\psi_{i+1}+d\psi_{i+1})\circ \rho_i-\nu_i\circ \psi_i=0.
$$
It is clear that this perturbation preserves the property $\pi\circ \psi=\phi$. To do this, it is enough to show that every homomorphism between $\CF'_i(\infty)$ and $\CF_{i+1}/\CF'_{i+1}$ can be written as $d\psi_{i+1}\circ \rho_i$. In other words, it is enough to prove the surjectivity of the natural restriction map
$$
\textrm{Hom}(\CF'_{i+1}(\infty), \CF_{i+1}/\CF'_{i+1})\rightarrow \textrm{Hom}(\CF'_i(\infty), \CF_{i+1}/\CF'_{i+1})
$$
But the cokernel of this map is contained in
$$
\textrm{Ext}^1(\CF'_{i+1}/\CF'_i, \CF_{i+1}/\CF'_{i+1})
$$
and therefore it is enough to show that this \textrm{Ext} group is 0. Recall that $\CF_{i+1}/\CF'_{i+1}$ is a torsion sheaf, supported at finitely many points. Moreover, $\CF'_{i+1}/\CF'_i$ has finitely many ``torsion points'', i.e. points of $\BA^1=\BP^1 \backslash \infty$ above which there is torsion. In order for the above $\textrm{Ext}^1$ group to vanish, it would be enough to have the support of $\CF_{i+1}/\CF'_{i+1}$ and the torsion points of $\CF'_{i+1}/\CF'_i$ be two disjoint sets (just apply Serre duality). Therefore, Lemma~\ref{transversal section} reduces to the following:
\begin{lemma} For the generic point $(\CF,\CF')\in \fC_\kappa$, the support of the sheaf $\CF_{i+1}/\CF'_{i+1}$ is disjoint from the set of torsion points of $\CF'_{i+1}/\CF'_i$.
\end{lemma}
\begin{proof} Note that the condition in the statement of the lemma is an open condition. Therefore, since $\fC_\kappa$ is irreducible, it is enough to find at least one point with the desired property. We will prove this lemma for the case when $\kappa$ is an indecomposable quiver representation, since the case when $\kappa$ is a sum of such representations is completely analogous. \\

Suppose that $\kappa$ is the indecomposable quiver representation $j\rightarrow j+1\rightarrow...\rightarrow k\rightarrow 0$, and choose any flag $\CF\in p(\fC_\kappa)$. The sections of the flag components $\CF_i\subset \CO^n$ are (locally) vectors $(P_1,...,P_n)$ of polynomials in $x$. For each $i$, the inclusion $\CF_{i-1}\hookrightarrow \CF_i$ will be an inclusion of vector bundles in the vicinity of the generic point $\zeta\in \BA^1$. Near such a point $\zeta$, the bundle $\CF_{j-1}$ will be cut out in $\CF_j$ by a linear equation $Q_1P_1+...+Q_nP_n=0$. We can choose $\zeta$ such that there exist $(P_1,...,P_n)\in \CF_j$ which verify $x-\zeta\nmid Q_1P_1+...+Q_nP_n$. Then define $\CF'_i=\CF_i$ for $i<j$ or $i>k$, whereas for $j\leq i\leq k$ define
$$
\CF'_i=\{(P_1,...,P_n)\in \CF_i, \textrm{ such that } x-\zeta|  Q_1P_1+...+Q_nP_n\}
$$
Then one sees that $\CF'_{i-1}\subset \CF'_i$ for all $i$. Indeed, the only non-trivial inclusion is the one for $i=j$, but this one follows from the fact that $Q_1P_1+...+Q_nP_n=0$ for $(P_1,...,P_n)\in \CF_{j-1}$. The maps
$$
\CF_j/\CF'_j\rightarrow...\rightarrow \CF_k/\CF'_k\rightarrow 0
$$
induce the quiver representation $j\rightarrow j+1\rightarrow...\rightarrow k\rightarrow 0$. To see this, note that for $j\leq i\leq k-1$, we have that $\CF_i \supset \CF_j$ and $\CF_j$ contains an element $(P_1,...,P_n)$ such that $x-\zeta\nmid Q_1P_1+...+Q_nP_n$. Therefore $(P_1,...,P_n)\notin \CF'_{i+1}$, and thus $\CF_i$ is not a subsheaf of $\CF'_{i+1}$. Thus we have that
$$
(\CF,\CF')\in \fC_\kappa^\circ\subset \fC_\kappa
$$
Now to show that $(\CF,\CF')$ constructed in this way satisfies the conclusion of the lemma. Obviously, the sheaves $\CF_{i+1}/\CF'_{i+1}$ have non-empty support only for $j-1\leq i\leq k-1$, and in that case the support consists only of $\zeta$. On the other hand, we will show that for $j-1\leq i\leq k-1$ the sheaf $\CF'_{i+1}/\CF'_i$ does not have $\zeta$ as a torsion point. Indeed, take $(P_1,...,P_n)\in \CF'_{i+1}$ such that $((x-\zeta)P_1,...,(x-\zeta)P_n)\in \CF'_i$. Since $\zeta$ is not a torsion point of $\CF_{i+1}/\CF_i$ (by assumption on $\zeta$), then $(P_1,...,P_n)\in \CF_i$. However, since $(P_1,...,P_n)\in \CF'_{i+1}$ and $i+1\leq k$, this means that $x-\zeta|Q_1P_1+...+Q_nP_n$. Therefore, $(P_1,...,P_n)\in \CF'_i$ and thus the sheaf $\CF'_{i+1}/\CF'_i$ has no torsion at $\zeta$.
\end{proof}

This completes the proof of Lemma~\ref{transversal section}.

\end{proof}
\textrm{ }\\

Now let us return to the proof of Proposition~\ref{factorization A(0)}. Proposition~\ref{class of the section} and Lemma~\ref{transversal section} imply that
$$
c(E,0)=e(E)=\sum_{\kappa \in \textrm{ Rep}(A_{n-1})}[\fC_{\kappa}]
$$
Therefore, in the notation of Section~\ref{general correspondences}, we have
$$
A(0)=\sum_{\kappa \in \textrm{ Rep}(A_{n-1})} p_*([\fC_{\kappa}]\cdot q^*)=\sum_{\kappa \in \textrm{ Rep}(A_{n-1})} e_{\kappa} =
\sum_{b_1,...,b_{\nu}\geq 0} e_{b_1\theta_1+...+b_{\nu}\theta_{\nu}}
$$
where $\theta_{\nu}>...>\theta_1$ denote the positive roots. Applying Theorem~\ref{operators Ringel-Hall}, we know that the $e_{\kappa}$ satisfy the same multiplication rules as the corresponding elements of $U(A_{n-1})^{op}$. This and \eqref{formula quiver} imply that
$$
A(0)= \sum_{b_1,...,b_{\nu}\geq 0} \frac
{e_{\theta_{\nu}}^{b_{\nu}}}{b_{\nu}!}\cdot..\cdot \frac
{e_{\theta_{1}}^{b_{1}}}{b_{1}!}=\prod_{\theta\in R^+}
\exp(e_{\theta}).
$$\\
\end{proof}

\subsection{}
\label{enter s}

When $m\in \BN$, there is a direct way to factor $A(m)$ in terms of
$g:=A(0)$. For that, let $s:\CM \rightarrow \CM$ be the map
$$
s(\CF)=\CF(-m)
$$
In the above, $\CF(-m)$ denotes the flag obtained from $\CF$ by twisting each of
its components by $\CO(-m)=\CO(-m\cdot 0_{\BP^1})$. Then we have

\begin{proposition} \label{factorization A(m)}
For generic $\frac ax$,
$$
A(m)=y \cdot g \cdot s_*
$$
for some non-zero constant $y\in H^*_{T\times \BC^*}(pt)$.
\end{proposition}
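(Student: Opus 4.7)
The plan is to use equivariant localization to reduce the operator identity to a universal identity of weight products at the torus fixed points, which can then be verified via the explicit character formula of Proposition~\ref{explicit char computation}.

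First I would rewrite $g\cdot s_*$ as a single push-pull. Since $s$ is a $T\times\BC^*$-equivariant isomorphism, the diagram
$$\begin{CD}
\CM\times\CM @>{\Id\times s}>> \CM\times\CM \\
@V{q}VV @V{q}VV \\
\CM @>{s}>> \CM
\end{CD}$$
is Cartesian. The base change identity \eqref{equiv cohom 2} then gives $q^*\circ s_*=(\Id\times s)_*\circ q^*$, and combining this with the projection formula \eqref{equiv cohom 1} and the equality $p\circ(\Id\times s)=p$ produces
$$g\cdot s_*(\alpha)=p_*\bigl(e(F)\cdot q^*\alpha\bigr),\qquad F:=(\Id\times s)^*E.$$
By localization, the identity $A(m)=y\cdot g\cdot s_*$ is equivalent to the statement that for every pair of torus-fixed points $(d,d')\in(\CM\times\CM)^{T\times\BC^*}$,
$$\prod_{w\in E_{d,d'}}(w+mx)=y\cdot\prod_{w\in E_{d,s(d')}}w,$$
for a single universal constant $y$. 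Here $s(d')$ denotes the fixed point whose integer invariants are obtained from those of $d'$ via the shift ${d'}_i^{j'}\mapsto {d'}_i^{j'}+m$.

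Second, I would verify this identity with Proposition~\ref{explicit char computation}. Under the shift $d'\mapsto s(d')$, each index $N={d'}_i^{j'}-d_{i+1}^j+1$ and $M={d'}_i^{j'}-d_i^j+1$ increases by $m$; using the elementary identity $e^{x(N+m)}-e^x=e^{mx}(e^{xN}-e^x)+e^x(e^{mx}-1)$, one obtains
$$\textrm{char}(E_{d,s(d')})=e^{mx}\cdot\textrm{char}(E_{d,d'})+\frac{e^x(e^{mx}-1)}{e^x-1}\cdot D,$$
where $D$ is the difference between the index sums $\sum_{j=1}^{i+1}\sum_{j'=1}^{i}e^{a_j-a_{j'}}$ (coming from the $G$-terms) and $\sum_{j=1}^{i}\sum_{j'=1}^{i}e^{a_j-a_{j'}}$ (coming from the $H$-terms), summed over $i=1,\ldots,n-1$. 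The combinatorial point is that at each level $i$ the difference is precisely the extra row $j=i+1$, contributing $\sum_{j'=1}^{i}e^{a_{i+1}-a_{j'}}$; summing over $i$ telescopes to $\sum_{1\le a<b\le n}e^{a_b-a_a}=\sum_{\alpha\in R^-}e^{\alpha}$, which is manifestly independent of $d$ and $d'$.

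The character identity then reads
$$\textrm{char}(E_{d,s(d')})=e^{mx}\cdot\textrm{char}(E_{d,d'})+\sum_{k=1}^{m}\sum_{\alpha\in R^-}e^{kx+\alpha}.$$
Both sides are genuine $T\times\BC^*$-characters and the correction on the right has non-negative coefficients, so equality of characters upgrades to an isomorphism of representations
$$E_{d,s(d')}\;\cong\;\bigl(E_{d,d'}\otimes\BC_{mx}\bigr)\oplus\bigoplus_{k=1}^{m}\bigoplus_{\alpha\in R^-}\BC_{kx+\alpha},$$
where $\BC_w$ denotes the one-dimensional representation of weight $w$. Taking top Chern classes (which are multiplicative on direct sums) then gives the required identity with
$$y=\frac{1}{\prod_{k=1}^{m}\prod_{\alpha\in R^-}(kx+\alpha)},$$
which is non-zero for generic $\frac{a}{x}$ since no factor $kx+\alpha$ vanishes. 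The main obstacle is executing the telescoping of the $G$- and $H$-index sums cleanly; once the correction is identified as $\sum_{\alpha\in R^-}e^{\alpha}$, which does not depend on $d$ or $d'$, the rest is automatic.
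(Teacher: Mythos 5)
Your proof is correct and follows essentially the same route as the paper: equivariant localization reduces the operator identity to a fixed-point weight-product identity, which is then checked against the explicit character formula of Proposition~\ref{explicit char computation}. The paper states this reduction more tersely (it works directly from \eqref{matrix coefficients of A(m) formula} and then says the character identity ``immediately follows from Proposition~\ref{explicit char computation}''), whereas you add two useful refinements: a clean base-change reformulation of $g\cdot s_*$ as $p_*\bigl(e\bigl((\Id\times s)^*E\bigr)\cdot q^*(\cdot)\bigr)$, and an explicit evaluation of the correction term $\sum_{k=1}^m\sum_{\alpha\in R^-}e^{kx+\alpha}$, yielding the closed form $y=\prod_{k=1}^{m}\prod_{\alpha\in R^-}(kx+\alpha)^{-1}$, which the paper leaves implicit. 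Your appeal to ``upgrading to an isomorphism of representations'' is more than you need --- equality of torus characters is already equality of weight multisets, which is all the top Chern class product sees --- but it is harmless and the rest of the argument is sound, including the restriction to $m\in\BN$ (needed anyway for $s$ to be defined) and the genericity condition on $\frac{a}{x}$ guaranteeing $y\neq 0$.
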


\begin{remark} The word generic is mentioned because the constant $y$ has poles as a rational function of $\frac ax$.
\end{remark}

\begin{proof} The easiest way to prove this proposition is to use equivariant localization. By Theorem~\ref{locaization thm}, it is enough to prove that the desired equality holds for basis vectors $[d]\in H$, where $d=(d_j^i)$. Thus, we must show that
$$
A(m)[d]=y\cdot A(0)\cdot s_*[d]
$$
Obviously, $s_*[d]=[s(d)]=[d+m]$, where $d+m:=(d_j^i+m)$. By using \eqref{matrix coefficients of A(m) formula}, the above becomes equivalent to
$$
A(m)[d]=\sum_{d'} [d']\cdot \frac {\dsp \prod_{w\in E_{d',d}} (w+mx)}{\dsp \prod_{w\in \CT_{d'} \CM} w}=y\cdot \sum_{d'} [d']\cdot \frac {\dsp \prod_{w\in E_{d',d+m}} w}{\dsp \prod_{w\in \CT_{d'} \CM} w}=y\cdot A(0)\cdot s_*[d]
$$
Therefore, we must show that for any fixed points $d,d'$ we have
$$
\prod_{w\in E_{d',d}} (w+mx)=y\cdot \dsp \prod_{w\in E_{d',d+m}} w
$$
for some non-zero constant $y\in H^*_{T\times \BC^*}(pt)$ which is independent of $d,d'$. Since in any representation of $T\times \BC^*$, the character is the sum of the exponentials of the weights, the above relation is equivalent to
$$
\textrm{char}(E_{d',d}) \cdot e^{mx} = \textrm{constant} + \textrm{char}(E_{d',d+m})
$$
where the constant does not depend on $d,d'$. This immediately follows from Proposition~\ref{explicit char computation}.\\
\end{proof}

\subsection{} The map $s_*$ has a number of nice commutation properties with the generators of the $\fsl_n-$action on $H$.

\begin{proposition} \label{commutation s}
The map $s_*$ commutes with $f_1,...,f_{n-1}, e_1,..., e_{n-2}$. Moreover, its commutator with $h_{n-1}$ is given by
\begin{equation} \label{commutation s, h_n-1}
[s_*, h_{n-1}]=mn\cdot s_*
\end{equation}
\end{proposition}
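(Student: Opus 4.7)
The plan is to carry out the proof by equivariant localization in the fixed-point basis $\{[d]\}$ of $H$ provided by Theorem~\ref{locaization thm}. Since $s$ is a closed embedding (identifying $\CM_\gamma$ with the closed subvariety of $\CM_{\gamma-m\sum_j j\alpha_j}$ consisting of flags divisible by $\CO(-m)$), one has $s_*[d] = [d+m]$, where $d+m$ denotes the fixed flag with every $d_j^\ell$ increased by $m$.

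The commutator with $h_{n-1}$ falls out of weight considerations. By~\eqref{weights}, $s_*$ shifts the $\fsl_n$-grading of $H$ by $-m\sum_{j=1}^{n-1} j\alpha_j$, so formula~\eqref{action of h} gives $[s_*, h] = m\langle h, \sum_j j\alpha_j\rangle\, s_*$ for every $h\in\fh$. Evaluating $\langle h_{n-1}, \sum_j j\alpha_j\rangle = 2(n-1) - (n-2) = n$ yields~\eqref{commutation s, h_n-1}, whereas $\langle h_i, \sum_j j\alpha_j\rangle = 0$ for $i\le n-2$, consistent with the vanishing commutators $[s_*, e_i] = [s_*, f_i] = 0$ to be established next.

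To treat the remaining commutators I would compare matrix coefficients on fixed points. Write $d^{(+\ell)}$ (respectively $d^{(-\ell)}$) for the fixed flag obtained from $d$ by incrementing (respectively decrementing) $d_i^\ell$ by $1$. Then
$$e_i[d] = \sum_\ell \frac{[\fC_i]|_{(d^{(-\ell)},d)}}{e(T_{d^{(-\ell)}}\CM)}\, [d^{(-\ell)}]$$
summed over $\ell$ with $d_i^\ell > d_{i+1}^\ell$, and the analogous formula for $f_i$ involves fixed points of the form $(d, d^{(+\ell)})$. At any such fixed point only the simple correspondence $\fC_{[i;1)}=\fC_i$ passes through (the larger $\fC_\kappa$ miss it because their quotient dimension vector differs from that of $[i;1)$), so Proposition~\ref{class of the section} combined with Lemma~\ref{transversal section} gives $[\fC_i]|_{(d^{(-\ell)}, d)} = e(E)|_{(d^{(-\ell)}, d)}$. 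The commutators thereby reduce to showing that ratios of the form $e(E)|_{(d_1,d_2)}/e(T_{d_1}\CM)$ are invariant under $d\mapsto d+m$.

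The main technical input is the character formula of Proposition~\ref{explicit char computation}. Because $d_n^j = 0$ by convention does not translate under $d\mapsto d+m$, the only summand of $\textrm{char}(E_{d_1,d_2})$ that fails to be translation-invariant in $(d_1, d_2) \mapsto (d_1+m, d_2+m)$ is the $i=n-1$ term of the first sum, and the extra weights it produces depend entirely on the $(n-1)$-row of the \emph{second} flag $d_2$. For $e_i$ with $i\le n-2$ the numerator $E_{d^{(-\ell)},d}$ and denominator $E_{d^{(-\ell)},d^{(-\ell)}}$ share the same $(n-1)$-row in their second flags (since decrementing in position $i<n-1$ does not touch row $n-1$), so the extra weights match and cancel. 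For $f_i$ at any $i$ the numerator $E_{d,d^{(+\ell)}}$ and the denominator $E_{d^{(+\ell)},d^{(+\ell)}}$ share the same second flag $d^{(+\ell)}$, and the cancellation is automatic. The subtle point, explaining why $e_{n-1}$ is deliberately excluded from the proposition, is that for $e_{n-1}$ the two second flags $d$ and $d^{(-\ell)}$ differ in row $n-1$; the extra weights do not cancel, producing a non-trivial $[s_*, e_{n-1}]$ whose bracket with $f_{n-1}$ must then yield $[s_*, h_{n-1}] = mn\cdot s_*$ through $h_{n-1}=[e_{n-1},f_{n-1}]$.
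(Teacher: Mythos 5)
Your proof is correct, but for the commutators $[s_*,e_i]=[s_*,f_i]=0$ you take a genuinely different route from the paper. The paper argues formally: it introduces the auxiliary correspondence $\fC_i'=\{(\CF,\CF')\mid \CF'(-m)\subset_i\CF\}$, uses the right half of a fiber-square diagram to get the base-change identity $\tilde q^*s_*=s'_*\widehat q^*$, and then exploits the map $s'':\fC_i\to\fC_i'$, $(\CF,\CF')\mapsto(\CF(-m),\CF')$, which is an isomorphism precisely when $i\le n-2$ (since only then does $\CF\subset\CF_{i+1}(-m)\subset\CO^n(-m)$ guarantee $\CF(m)\subset\CO^n$); the $f_i$ case is stated to be analogous. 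By contrast, you prove the same commutation relations by localization in the fixed-point basis, reducing to an identity between products of weights, and then invoking Proposition~\ref{explicit char computation} together with $e(E)=\sum_\kappa[\fC_\kappa]$ and $T\CM\cong E|_\Delta$ to show the required ratios are invariant under $d\mapsto d+m$. The two approaches have complementary strengths: the paper's is self-contained and purely geometric, needing none of the machinery of Section~\ref{the vector bundle E}, and locates the obstruction at $e_{n-1}$ in the failure of $\CF(m)\subset\CO^n$; yours is entirely parallel to the proof of Proposition~\ref{factorization A(m)}, makes the role of the un-shifted convention $d_n^i=0$ completely transparent, and directly exhibits the extra $\BC^*$-weights responsible for $[s_*,e_{n-1}]\ne0$. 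Your weight-grading derivation of $[s_*,h_{n-1}]=mn\,s_*$ is a mild repackaging of the paper's explicit computation via~\eqref{action of h}, and the observation $\langle h_i,\sum_j j\alpha_j\rangle=0$ for $i\le n-2$ is a nice consistency check that the paper does not spell out.
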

\begin{proof} We will prove that $s_*$ commutes with $e_i$, for $1\leq i\leq n-2$. The proof of commutativity with the $f_i$'s will be completely analogous. Recall that $e_i=\tilde{p}_*\tilde{q}^*$, where $\tilde{p},\tilde{q}$ are the standard projections from $\fC_i\subset \CM\times \CM$ onto the two factors. Consider the correspondence
$$
\fC_i'=\{(\CF, \CF')\in \CM \times \CM|\CF'(-m) \subset_i \CF\}
$$
and let $\widehat{p}, \widehat{q}:\fC_i'\rightarrow \CM$ be the standard projections. Moreover, let $s': \fC_i'\rightarrow \fC_i$ be the map given by $s'(\CF, \CF')=(\CF, \CF'(-m))$. \\

From the definitions, it is easily seen that
$$
\begin{CD}
\CM @<{\widehat{p}}<< \fC_i' @>{\widehat{q}'}>> \CM \\
@ V{\textrm{Id}}VV @V{s'}VV @V{s}VV \\
\CM @<{\tilde{p}}<< \fC_i @>{\tilde{q}}>> \CM
\end{CD}
$$
commutes, and moreover the square on the right is a fiber square. Therefore we have the base-change formula $\tilde{q}^*s_*=s'_*{\widehat{q}}^*$, and thus
\begin{equation} \label{formula 1}
e_i\cdot s_*=\tilde{p}_*\tilde{q}^*s_*=\tilde{p}_*s'_*{\widehat{q}}^*=\widehat{p}_*{\widehat{q}}^*
\end{equation}
Now let $s'': \fC_i\rightarrow \fC_i'$ be the map given by $s''(\CF, \CF')=(\CF(-m), \CF')$. It gives us the commutative diagram
$$
\begin{CD}
\CM @<{\tilde{p}}<< \fC_i @>{\tilde{q}}>> \CM \\
@ V{s}VV @V{s''}VV @V{\textrm{Id}}VV \\
\CM @<{\widehat{p}}<< \fC_i' @>{\widehat{q}}>> \CM
\end{CD}
$$
There is no fiber square here, but note that $s''$ is an isomorphism, because the inverse map $(\CF, \CF')\rightarrow (\CF(m), \CF')$ is well-defined. Note that this only holds for $1\leq i\leq n-2$, since otherwise we cannot guarantee the fact that $\CF(m)\subset \CO^n$.

Since $s''$ is an isomorphism, therefore $s''_*{s''}^*=\textrm{Id}$. The above commutative diagram then gives us
\begin{equation} \label{formula 2}
s_*\cdot e_i=s_*\tilde{p}_*\tilde{q}^*=\widehat{p}_*{s''}_*\tilde{q}^*=\widehat{p}_*{s''}_*{s''}^*{\widehat{q}}^*=\widehat{p}_*{\widehat{q}}^*
\end{equation}
Thus \eqref{formula 1} and \eqref{formula 2} imply that $e_i\cdot s_*=\widehat{p}_*{\widehat{q}}^*=s_*\cdot e_i$. \\

To prove \eqref{commutation s, h_n-1}, we will check it on any basis vector $[d]$, where $d=(d_j^i)$ is a vector of non-negative integers. Since $s$ takes the fixed point $d$ to the fixed point $d+m=(d_j^i+m)$, we will have $s_*[d]=[d+m]$. The explicit formula for $h_{n-1}$ in \eqref{action of h} gives us
$$
[s_*,h_{n-1}]([d])=\left(\frac {a_{n-1}}x-\frac {a_n}x-1+d_{n-2}-2d_{n-1}\right)\cdot [d+m] -
$$
$$
- \left(\frac {a_{n-1}}x-\frac {a_n}x-1+d_{n-2}-2d_{n-1}-mn\right)\cdot [d+m]=mn\cdot s_*[d]
$$

\end{proof}

The properties of $s_*$ listed in the previous proposition are very important, because they completely determine the map $s_*$ (up to a constant).
\begin{proposition} \label{s_* unique}
All maps $\Psi:H\rightarrow H$ which commute with the action of $f_1,...,f_{n-1},e_1,...,e_{n-2}$ and satisfy
$$
[\Psi, h_{n-1}]=mn\cdot \Psi
$$
are constant multiples of $s_*$.
\end{proposition}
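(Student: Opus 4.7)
The plan is to reduce the uniqueness to a statement about the highest weight vector $v_{\lambda_0}:=[d=0]\in H$, where $\lambda_0=\frac{a}{x}-\rho$. By Theorem~\ref{cohomology is Verma}, $H\cong M(\lambda_0)$ and in particular $H=U\fn^-\cdot v_{\lambda_0}$. Since $\Psi$ commutes with $f_1,\ldots,f_{n-1}$, which generate $\fn^-$ as a Lie algebra, $\Psi$ will be completely determined on $H$ by the single vector $w:=\Psi(v_{\lambda_0})$, and it suffices to show that $w$ is determined up to scalar.

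Next I would use the remaining hypotheses to pin down the weight and singularity type of $w$. From $\Psi\cdot e_i=e_i\cdot\Psi$ and $e_iv_{\lambda_0}=0$ for $i\leq n-2$, the vector $w$ is annihilated by the subalgebra $\mathfrak{l}_0^+:=\langle e_1,\ldots,e_{n-2}\rangle$; that is, $w$ is a highest weight vector for the Levi subalgebra $\mathfrak{l}_0:=\fsl_{n-1}\subset\fsl_n$ acting on $w_1,\ldots,w_{n-1}$. Since $h_i=[e_i,f_i]$ for $i\leq n-2$, $\Psi$ also commutes with $h_1,\ldots,h_{n-2}$, and combined with $[\Psi,h_{n-1}]=mn\cdot\Psi$ this forces $w$ to have $\fsl_n$-weight $\lambda_0-mn\omega_{n-1}$, where $\omega_{n-1}$ is the fundamental weight dual to $h_{n-1}$. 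Notice this is exactly the weight of $s_*(v_{\lambda_0})=[(d_j^i=m)]$ by Proposition~\ref{commutation s}.

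The main step is then to show that the space of $\mathfrak{l}_0^+$-singular vectors in $M(\lambda_0)$ of $\fsl_n$-weight $\lambda_0-mn\omega_{n-1}$ is one-dimensional for generic $\lambda_0$. For this I would invoke the parabolic PBW decomposition
$$
M(\lambda_0)\cong U\fu^-\otimes M^{\fsl_{n-1}}\!\left(\lambda_0\big|_{\mathfrak{l}_0}\right)\quad\text{as }\mathfrak{l}_0\text{-modules},
$$
where $\fu^-:=\langle f_{w_i-w_n}:1\leq i\leq n-1\rangle\subset\fn^-$ is abelian and $\mathfrak{l}_0$ acts on $U\fu^-$ via the adjoint action. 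For generic $\lambda_0$ the second factor is an irreducible $\fsl_{n-1}$-Verma module, so the standard decomposition of ``locally finite $\otimes$ generic Verma'' into a direct sum of Vermas yields that the $\fsl_n$-character of the space of $\mathfrak{l}_0^+$-singular vectors in $M(\lambda_0)$ equals
$$
e^{\lambda_0}\cdot\prod_{i=1}^{n-1}\frac{1}{1-e^{-(w_i-w_n)}}.
$$
Since the roots $w_1-w_n,\ldots,w_{n-1}-w_n$ are linearly independent in $\fh^*$, the coefficient of $e^{\lambda_0-mn\omega_{n-1}}$ equals $1$, realized by the unique solution $l_i=m$ for every $i$. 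Hence $w=c\cdot s_*(v_{\lambda_0})$ for some scalar $c$, and the first paragraph then gives $\Psi=c\cdot s_*$ on all of $H$.

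The hardest part will be the singular-vector character computation in the third paragraph: although it is a classical fact for generic Verma modules, it requires both the PBW identification of $M(\lambda_0)$ as an $\mathfrak{l}_0$-module tensor product and the irreducibility of $M^{\fsl_{n-1}}(\lambda_0|_{\mathfrak{l}_0})$, which relies on $\lambda_0$ (equivalently $\frac{a}{x}$) being generic---consistent with the genericity caveat already present in Proposition~\ref{factorization A(m)}.
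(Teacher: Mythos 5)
Your proof is correct but arrives at the uniqueness by a genuinely different route from the paper's. Both arguments reduce to showing that $\Psi([0])$ is a scalar multiple of $s_*[0]=[m]$, and both use the commutation relations with $h_1,\ldots,h_{n-1}$ to pin down its weight $\lambda_0-mn\omega_{n-1}$. From there the paper stays inside the geometric fixed-point basis: writing $\Psi([0])=\sum_d\alpha_d[d]$, it chooses an offending $\tilde d$ (one with $\tilde d_{j+1}^i<\tilde d_j^i$ for some $j\le n-2$) minimal in a suitable order, and uses $e_j\Psi([0])=0$ together with the explicit matrix coefficients of $e_j$ from~\cite{BF} to show that $\alpha_{\tilde d}$ must vanish, leaving only the $d$ with $d_j^i\equiv m$. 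You instead recast the question abstractly as counting $\mathfrak{l}_0^+$-singular vectors of the prescribed $\fh$-weight in $M(\lambda_0)$, using the parabolic PBW isomorphism $M(\lambda_0)\cong U\fu^-\otimes M^{\mathfrak{l}_0}(\lambda_0|_{\mathfrak{l}_0})$ of $\mathfrak{l}_0$-modules together with the Verma-flag splitting of $(\textrm{locally finite})\otimes(\textrm{generic Verma})$ to get the singular-vector character $e^{\lambda_0}\prod_{i<n}(1-e^{-(w_i-w_n)})^{-1}$, and then the linear independence of $w_1-w_n,\ldots,w_{n-1}-w_n$ to isolate a one-dimensional weight space. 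Both routes require genericity of $\frac ax$. Your version is more structural and sidesteps the combinatorics of the $[d]$-basis entirely, at the price of invoking the (standard but somewhat heavier) tensor-with-Verma decomposition; the paper's argument is more self-contained within its geometric setup, relying only on the already-established matrix coefficients of the $e_j$'s. Either is a valid proof.
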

\begin{proof} Let us first compute $\Psi([0])$, where $0=(0,...,0)$. We will write $\Psi([0])$ in the basis of cohomology classes $[d]$:
\begin{equation} \label{psi 0}
\Psi([0])=\sum_d \alpha_d\cdot [d]
\end{equation}
The operators $h_1,...,h_{n-1}$ are diagonal in the basis $[d]$. Recall that $\Psi$ commutes with $h_1,...,h_{n-2}$ and $[\Psi, h_{n-1}]=mn\cdot \Psi$. Therefore, \eqref{action of h} implies that the $d$'s that appear with non-zero coefficient in the above sum have degrees $d_1=m$, $d_2=2m$, ..., $d_{n-1}=(n-1)m$. \\

Now suppose that there is a $\tilde{d}$ such that $\alpha_{\tilde{d}}\neq 0$ and $\tilde{d}_{j+1}^i<\tilde{d}_{j}^i$ for some $j\leq n-2$. If there are several such $\tilde{d}$'s, choose the one with the smallest $j$, and if there are several such $\tilde{d}$'s for the same $j$, then choose the one such that the number $\tilde{d}_j^i$ is minimal among all $i$. Since $\Psi$ commutes with $e_j$, it follows that
$$
e_j\left(\sum_d \alpha_d\cdot [d] \right)=0
$$
The coefficient of $[\tilde{d}-\delta_j^i]$ in the sum from the left hand side can only come from $e_j[\tilde{d}]$, by minimality of $\tilde{d}_j^i$. Since $\tilde{d}_{j+1}^i<\tilde{d}_{j}^i$, this coefficient is non-zero (see for example the computation of matrix coefficients of $e_j$ in ~\cite{BF}). Therefore, we obtain a contradiction with the assumption that $\alpha_{\tilde{d}}\neq 0$. \\

This means that in \eqref{psi 0}, we can have $\alpha_d\neq 0$ only if $d_j^i=d_{j+1}^i$ for all $1\leq i\leq j\leq n-2$. Since $d_1=m$, $d_2=2m$, ..., $d_{n-1}=m(n-1)$, the only $d$ that can appear with non-zero coefficient in ~\eqref{psi 0} is the one with $d_j^i=m, \forall i,j$. Therefore,
$$
\Psi([0])=\alpha\cdot[m]=\alpha\cdot s_*[0]
$$
for some $\alpha\in \BC$. But recall that $H$ is isomorphic to the Verma module, and thus the $f_i$'s generate $H$ from the vector $[0]$. Since both $\Psi$ and $s_*$ commute with the $f_i$'s, we will therefore have the identity of operators $\Psi=\alpha\cdot s_*$.
\end{proof}\\

\subsection{} Let $S^{mn}$ denote the $mn-$th symmetric power of the dual to the standard representation of $\fsl_n$ in $\BC^n$. As a vector space, $S^{mn}$ consists of homogeneous polynomials of degree
$mn$ in variables $y_1,...,y_n$. The $\fsl_n-$action on $S^{mn}$ is given by
$$
e_i=-y_{i+1} \frac {\partial}{\partial y_{i}}, \textrm{ }\textbf{ } f_i=-y_{i}
\frac {\partial}{\partial y_{i+1}}
$$
$$
h_i=y_{i+1} \frac {\partial}{\partial y_{i+1}}-y_{i} \frac {\partial}{\partial y_{i}}
$$
There is a map $\tilde{ev}:S^{mn}\rightarrow \BC$ given by evaluating polynomials at $y_1=...=y_{n-1}=0, y_n=1$. Then $\tilde{ev}$ annihilates all monomials except for $y_n^{mn}$. If we give $\BC$ the trivial $\fsl_n-$action, then one easily computes the fact that $\tilde{ev}$ commutes with the actions of $f_1,...,f_{n-1},e_1,...,e_{n-2}$. Moreover, we have that $[\tilde{ev}, h_{n-1}]=mn\cdot \tilde{ev}$. Therefore the map
\begin{equation}
\textrm{Id}\otimes \tilde{ev}:H\otimes S^{mn}\rightarrow H\otimes \BC=H
\end{equation}
commutes with the actions of $f_1,...,f_{n-1},e_1,...,e_{n-2}$ and satisfies $[\textrm{Id}\otimes \tilde{ev}, h_{n-1}]=mn\cdot \textrm{Id}\otimes \tilde{ev}$. As in ~\cite{EK}, up to a constant multiple there is a unique $\fsl_n-$intertwiner $\Phi_m:H\rightarrow H\otimes S^{mn}$. By the above, the composition
\begin{equation}
(\textrm{Id}\otimes \tilde{ev})\cdot \Phi_m:H\rightarrow H
\end{equation}
commutes with the actions of $f_1,...,f_{n-1},e_1,...,e_{n-2}$ and satisfies $[(\textrm{Id}\otimes \tilde{ev})\cdot \Phi_m, h_{n-1}]=mn\cdot (\textrm{Id}\otimes \tilde{ev})\cdot \Phi_m$. Therefore, Proposition~\ref{s_* unique} immediately implies the following:
\begin{proposition} \label{computation of s_*}
We have
$$
s_*=\alpha \cdot (\emph{Id}\otimes \tilde{ev})\cdot \Phi_m
$$
for some constant $\alpha\in \BC$. \\
\end{proposition}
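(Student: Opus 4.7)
The plan is to invoke the uniqueness statement of Proposition~\ref{s_* unique}: any linear map $\Psi:H\to H$ that commutes with $f_1,\dots,f_{n-1}, e_1,\dots,e_{n-2}$ and satisfies $[\Psi, h_{n-1}] = mn\cdot \Psi$ is automatically a scalar multiple of $s_*$. So I only need to verify that the composition $(\Id\otimes \tilde{ev})\cdot \Phi_m$ belongs to this class of maps, and then the conclusion is immediate.

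To verify the commutation properties, I would first unpack the two ingredients. Since $\Phi_m:H\to H\otimes S^{mn}$ is an $\fsl_n$-intertwiner, it commutes with every $X\in \fsl_n$, where the action on the target is the standard tensor-product action $X\mapsto X\otimes 1 + 1\otimes X$. Next, the map $\tilde{ev}:S^{mn}\to \BC$ annihilates all monomials except $y_n^{mn}$; a direct check (already indicated in the excerpt just before the proposition) shows that $\tilde{ev}\circ X = X\circ \tilde{ev}$ on $S^{mn}$ for $X = f_1,\dots,f_{n-1}$ and $X = e_1,\dots,e_{n-2}$, while $\tilde{ev}\circ h_{n-1} - h_{n-1}\circ \tilde{ev} = mn\cdot \tilde{ev}$. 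Tensoring with $\Id_H$ and using that $\fsl_n$ acts trivially on the final target $\BC$, a two-line computation with the coproduct shows that $\Id\otimes \tilde{ev}$ inherits exactly the same commutation relations with $f_1,\dots,f_{n-1}, e_1,\dots,e_{n-2}$ and $h_{n-1}$ as $\tilde{ev}$ itself.

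Composing with the intertwiner $\Phi_m$ then preserves these relations: for $X\in\{f_1,\dots,f_{n-1}, e_1,\dots,e_{n-2}\}$ we get
$$
(\Id\otimes\tilde{ev})\cdot \Phi_m\cdot X = (\Id\otimes\tilde{ev})\cdot X\cdot \Phi_m = X\cdot (\Id\otimes\tilde{ev})\cdot \Phi_m,
$$
and an identical manipulation with $h_{n-1}$ yields $[(\Id\otimes\tilde{ev})\cdot\Phi_m, h_{n-1}] = mn\cdot (\Id\otimes\tilde{ev})\cdot\Phi_m$. Hence $(\Id\otimes\tilde{ev})\cdot\Phi_m$ satisfies the hypotheses of Proposition~\ref{s_* unique}, which immediately produces a constant $\alpha\in\BC$ with $s_* = \alpha\cdot (\Id\otimes\tilde{ev})\cdot\Phi_m$, as claimed.

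There is no real obstacle here; the serious work was concentrated in proving Proposition~\ref{s_* unique}, where the highest-weight vector argument pinned down $\Psi([0])$ up to a scalar. Once that uniqueness is in hand, Proposition~\ref{computation of s_*} is essentially a formal check that the abstract intertwiner $\Phi_m$ (truncated by $\tilde{ev}$ to land back in $H$) has the same weight-shifting and intertwining behavior as the geometric map $s_*$. The only minor point worth flagging is that Proposition~\ref{s_* unique} makes no nondegeneracy claim, so the resulting $\alpha$ could in principle vanish; however this does not affect the truth of the identity as stated.
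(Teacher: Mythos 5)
Your argument is exactly the paper's: the paper states these same commutation checks for $\tilde{ev}$ and $\Id\otimes\tilde{ev}$ in the paragraph preceding the proposition and then derives the statement directly from Proposition~\ref{s_* unique}. One small correction to your closing remark: Proposition~\ref{s_* unique} in fact yields $(\Id\otimes\tilde{ev})\cdot\Phi_m=\alpha'\cdot s_*$, and to invert this to the stated form one needs $\alpha'\neq 0$; moreover, since $s_*\neq 0$ (it sends $[d]$ to $[d+m]$), the constant cannot vanish without making the identity false, so it is not true that $\alpha=0$ ``does not affect the truth of the identity as stated.''
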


\subsection{} From Proposition~\ref{factorization A(m)}, we have that $A(m)=y\cdot g\cdot s_*$. Proposition~\ref{computation of s_*} implies that
\begin{equation} \label{qwerty}
A(m)=(y\alpha)\cdot g\cdot (\textrm{Id}\otimes \tilde{ev})\cdot \Phi_m
\end{equation}
Since $g\in SL_n$, it acts in a group-like fashion on tensor products, and thus
\begin{equation}
g\cdot (\textrm{Id}\otimes \tilde{ev})\cdot g^{-1}=\textrm{Id}\otimes (g\cdot \tilde{ev}\cdot g^{-1})
\end{equation}
A trivial check reveals that $g\cdot \tilde{ev}\cdot g^{-1}=ev$, where $ev:S^{mn}\rightarrow \BC$ is the linear map that evaluates polynomials at $y_1=...=y_n=1$. Therefore we have that $g\cdot (\textrm{Id}\otimes \tilde{ev})=(\textrm{Id}\otimes ev)\cdot g$. Putting this and \eqref{qwerty} together implies that
\begin{equation} \label{123}
A(m)=(y\alpha)\cdot (\textrm{Id}\otimes ev)\cdot g\cdot \Phi_m
\end{equation}
Since $\Phi_m$ denotes any intertwiner, we can absorb the constant $y\alpha$ into it, and then make it commute with $g$ in \eqref{123}. Therefore, we obtain the following:
\begin{theorem} \label{final factorization A(m)}
For generic $\frac ax$, the operator $A(m)$ has the factorization
$$
A(m):H \stackrel{g}{\longrightarrow} H \stackrel{\Phi_m}{\longrightarrow} H\otimes S^{mn} \stackrel{\emph{Id}\otimes ev}{\longrightarrow} H
$$
where $g=A(0)$ and $\Phi_m$ is an $\fsl_n-$intertwiner. \\
\end{theorem}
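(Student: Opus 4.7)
The plan is to chain together the two factorizations already established and then move $g$ past the evaluation via the coproduct. First I would invoke Proposition~\ref{factorization A(m)}, which gives $A(m) = y \cdot g \cdot s_*$ for generic $\frac{a}{x}$, with $y \in H^*_{T\times \BC^*}(pt)$ a nonzero scalar. Substituting the expression for $s_*$ from Proposition~\ref{computation of s_*} yields
$$
A(m) = (y\alpha)\cdot g \cdot (\mathrm{Id}\otimes \tilde{ev}) \cdot \Phi_m,
$$
which is almost in the desired form, except that $g$ appears on the wrong side of $\mathrm{Id}\otimes \tilde{ev}$.

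Next I would use the fact that $g=A(0)\in SL_n$ acts on the tensor product $H\otimes S^{mn}$ as the coproduct, i.e.\ in a group-like manner. This gives the conjugation identity
$$
g\cdot (\mathrm{Id}\otimes \tilde{ev})\cdot g^{-1} = \mathrm{Id}\otimes (g\cdot \tilde{ev}\cdot g^{-1}),
$$
since on the first tensor factor we are conjugating the identity. The key auxiliary computation is a direct check that $g\cdot \tilde{ev}\cdot g^{-1}=ev$: since $\tilde{ev}$ evaluates at $(y_1,\dots,y_n)=(0,\dots,0,1)$ and $g$ is the upper-triangular matrix with all entries on and above the diagonal equal to $1$, the adjoint action of $g$ on the dual of the standard representation translates the evaluation point $(0,\dots,0,1)$ to $(1,\dots,1)$, producing $ev$. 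Rearranging then gives $g\cdot (\mathrm{Id}\otimes \tilde{ev}) = (\mathrm{Id}\otimes ev)\cdot g$, so
$$
A(m) = (y\alpha)\cdot (\mathrm{Id}\otimes ev)\cdot g\cdot \Phi_m.
$$

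Finally, I would absorb the overall scalar $y\alpha$ into the intertwiner: since $\Phi_m$ is only defined up to a nonzero constant multiple (Verma-to-tensor intertwiners to $H\otimes S^{mn}$ are one-dimensional for generic highest weight by \cite{EK}), replacing $\Phi_m$ by $(y\alpha)\Phi_m$ yields the desired composition $A(m) = (\mathrm{Id}\otimes ev)\circ \Phi_m\circ g$. The only step that requires genuine care is the verification $g\cdot \tilde{ev}\cdot g^{-1}=ev$ on $S^{mn}$, as the other ingredients are immediate consequences of results already proved in the paper; but this is a short linear-algebra calculation in the dual standard representation, so I do not expect any real obstruction.
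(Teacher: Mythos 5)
Your argument tracks the paper's own proof step by step: chain Propositions~\ref{factorization A(m)} and~\ref{computation of s_*}, conjugate $\mathrm{Id}\otimes\tilde{ev}$ by the group-like $g$, check $g\cdot\tilde{ev}\cdot g^{-1}=ev$ by translating the evaluation point, and rescale $\Phi_m$. One step is elided in your final paragraph: from $A(m)=(y\alpha)\cdot(\mathrm{Id}\otimes ev)\cdot g\cdot\Phi_m$, absorbing the scalar gives only $(\mathrm{Id}\otimes ev)\circ g\circ\Phi_m$; to reach the stated composition $(\mathrm{Id}\otimes ev)\circ\Phi_m\circ g$ you must also move $g$ (acting as $g\otimes g$ on $H\otimes S^{mn}$) past $\Phi_m$, which is precisely the intertwiner property $(g\otimes g)\circ\Phi_m=\Phi_m\circ g$, valid because the unipotent upper-triangular $g$ acts locally nilpotently on $H$ and hence exponentiates on the tensor product. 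The paper compresses this to "make it commute with $g$," so the omission is small, but it is a genuinely used fact and should be stated.
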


\section{The generating function $Z(m)$ and the Calogero-Sutherland hamiltonian}
\label{traces}

\subsection{} As in Theorem~\ref{cohomology is Verma}, we have $H\cong M(\frac ax-\rho)$. Therefore, Theorem~\ref{final factorization A(m)} gives us
$$
A(m)=(\textrm{Id}\otimes ev)\cdot \Phi_m\cdot g
$$
where $\Phi_m:M(\frac ax-\rho)\rightarrow M(\frac ax-\rho)\otimes S^{mn}$ is an
interwtiner. By Theorem 2 in ~\cite{EK}, the character of $\Phi_m$ is given by
\begin{equation} \label{kirillov}
\chi_{\Phi_m}=c\cdot Y_{\frac ax,m}\cdot \delta^{-1}\cdot (y_1...y_n)^m
\end{equation}
In the above, $Y_{\frac ax,m}$ denotes the eigenfunction of the Calogero-Sutherland hamiltonian as in \eqref{eigenfunction CS}. Moreover, $\delta$ is the Weyl denominator \eqref{Weyl denominator} and $c\in H^*_{T\times \BC^*}(pt)$ is a constant. To compute the character of $A(m)$, we will use the fact that it
is determined by its values on elements $e^h$ of the maximal torus:
\begin{equation} \label{char computation 1}
\chi_{A(m)}(e^h)=\textrm{Tr}((\textrm{Id}\otimes ev)\cdot \Phi_m \cdot ge^h)=ev(\textrm{Tr}(\Phi_m \cdot ge^h))=ev(\chi_{\Phi_m}(ge^h))
\end{equation}
If we denote $e^h=\textrm{diag}(e^{h_1},...,e^{h_n})$, then
$$
ge^h=\left(
                                                       \begin{array}{ccccc}
                                                         e^{h_1} & e^{h_2} & e^{h_3} & ... & e^{h_n}  \\
                                                         0 & e^{h_2} & e^{h_3} & ... & e^{h_n}  \\
                                                         0 & 0 & e^{h_3} & ... & e^{h_n}  \\
                                                         ... & ... & ... & ... &...  \\
                                                         0 & 0 & 0 & ... & e^{h_n}  \\
                                                       \end{array}
                                                     \right)
$$
One easily diagonalizes the above matrix and notices that $ge^h=x^{-1}e^hx$, where the matrix $x$ is unipotent upper triangular, whose entry on row $i$ and column $j$ is
$$
x_{ij}=\frac 1{e^{h_i-h_j}-1}\cdot \prod_{k=i+1}^{j-1}\frac 1{1-e^{h_k-h_i}}
$$
The equivariance of $\chi_{\Phi_m}$ (relation \eqref{equiv}) and \eqref{char computation 1} yield
$$
\chi_{A(m)}(e^h)=ev(\chi_{\Phi_m}(x^{-1}e^h x))=ev(x^{-1}\cdot \chi_{\Phi_m}(e^h))
$$
By \eqref{kirillov}, the above becomes
\begin{equation} \label{char computation 3}
\chi_{A(m)}(e^h)=c\cdot Y_{\frac ax,m}(e^h)\cdot \delta^{-1}(e^h)\cdot ev(x^{-1}\cdot (y_1...y_n)^m)
\end{equation}
Since the representation $S^{mn}$ is nothing other than the $mn-$th symmetric power of the dual of $\BC^n$, to compute $ev(x^{-1}\cdot (y_1...y_n)^m)$ we must apply the matrix $x$ to the vector $(1,1,...,1)$, multiply the entries and raise the result to the $m-$th power. Thus we find that
$$
ev(x^{-1}\cdot (y_1...y_n)^m)=\left(\prod_{n\geq j>i\geq 1} \frac 1{1-e^{h_j-h_i}}\right)^m =
$$
$$
=\left(\prod_{\alpha \in R^+} \frac 1{1-e^{\langle h,-\alpha \rangle}}\right)^m=\left(e^{\langle h,\rho\rangle}\prod_{\alpha \in R^+} \frac 1{e^{\langle h,\alpha/2 \rangle}-e^{\langle h,-\alpha/2 \rangle}}\right)^m
=\frac {e^{\langle h,m\rho\rangle}}{\delta^m(e^h)}
$$
Plugging thus into \eqref{char computation 3} gives us the equality of formal power series
\begin{equation} \label{character of A(m)}
\chi_{A(m)}=c\cdot Y_{\frac ax,m}\cdot \delta^{-1}\cdot\frac {e^{m\rho}}{\delta^m}=c\cdot e^{m\rho}\cdot Y_{\frac ax,m}\cdot \delta^{-m-1}
\end{equation}
From the above relation, the constant $c$ is precisely the coefficient of $e^{\frac ax-\rho}$ in the power series $\chi_{A(m)}$. By \eqref{chi A(m)}, this equals
$$
c=\prod_{w\in \CT_{0} \CM}  \frac {w+mx}{w}
$$
where $0$ is the torus fixed flag with all degrees 0. But the point $0$ is isolated in $\CM$ (since the component in which it lies has dimension 0), and therefore $c=1$.  Thus \eqref{character of A(m)} and Proposition~\ref{character and gen function} imply the following:

\begin{theorem} \label{final computation character}
The generating function $Z(m)$ satisfies
\begin{equation} \label{final formula Z(m)}
Z(m)=e^{-\frac ax+(m+1)\rho}\cdot Y_{\frac ax,m}\cdot \delta^{-m-1}=Y_{\frac ax,m}\cdot e^{-\frac ax}\cdot \left(\prod_{\alpha\in R^+} \frac 1{1-e^{-\alpha}}\right)^{m+1}
\end{equation}
Here $Y_{\frac ax,m}$ is the eigenfunction of the Calogero-Sutherland hamiltonian
$$
L(m)\cdot Y_{\frac ax,m}=\frac {(a,a)}{x^2}\cdot Y_{\frac ax,m}
$$
with highest term $e^{\frac ax}$.
\end{theorem}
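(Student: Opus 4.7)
My plan is to reduce the computation of $Z(m)$ to that of the generalized character $\chi_{A(m)}$, then compute the latter using the factorization from Theorem~\ref{final factorization A(m)} together with the Etingof-Kirillov trace formula. By Proposition~\ref{character and gen function}, we have $\chi_{A(m)} = e^{\frac{a}{x}-\rho}\cdot Z(m)$, so it suffices to establish that $\chi_{A(m)} = c\cdot e^{m\rho}\cdot Y_{\frac{a}{x},m}\cdot \delta^{-m-1}$ for an appropriate constant $c$, and to show $c=1$.

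To compute $\chi_{A(m)}$, I would evaluate it on an element $e^h$ of the maximal torus. Using the factorization $A(m) = (\text{Id}\otimes ev)\cdot \Phi_m\cdot g$ and cyclicity of the trace, the character takes the form $\chi_{A(m)}(e^h) = ev(\chi_{\Phi_m}(ge^h))$. The key move is to diagonalize $ge^h$: since it is upper triangular with distinct diagonal entries $e^{h_1},\ldots,e^{h_n}$ for generic $h$, it is conjugate to $e^h$ by some unipotent upper triangular matrix $x$, whose entries $x_{ij}$ are computable in closed form as products of factors $\tfrac{1}{1-e^{h_k-h_i}}$. The equivariance property \eqref{equiv} then yields $\chi_{\Phi_m}(ge^h) = x^{-1}\cdot \chi_{\Phi_m}(e^h)$.

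Substituting the Etingof-Kirillov formula $\chi_{\Phi_m} = c\cdot Y_{\frac{a}{x},m}\cdot \delta^{-1}\cdot (y_1\cdots y_n)^m$, the scalar factors $Y_{\frac{a}{x},m}$ and $\delta^{-1}$ pass through $ev$ unchanged, so the remaining task is to evaluate $ev(x^{-1}\cdot (y_1\cdots y_n)^m)$. Since $S^{mn}$ is the $mn$-th symmetric power of the dual standard representation, this amounts to applying $x$ to the vector $(1,\ldots,1)^T$, multiplying the resulting entries together, and raising to the $m$-th power. A careful rearrangement of the resulting product over pairs $i<j$ should identify it with $\prod_{\alpha\in R^+}(1-e^{-\alpha})^{-m}$, which equals $e^{m\rho}\cdot \delta^{-m}$ after factoring out $e^{m\rho}$.

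Finally, I would pin down the constant $c$ by comparing the leading coefficient of $e^{\frac{a}{x}-\rho}$ on both sides: in $\chi_{A(m)}$, this coefficient comes from the unique torus-fixed point of $\CM_0$, which is an isolated point with trivial tangent contribution, forcing $c=1$. The main obstacle, and the only step requiring genuine computation rather than formal manipulation, is the explicit diagonalization of $ge^h$ and the matching of the resulting product $ev(x^{-1}\cdot (y_1\cdots y_n)^m)$ with the desired factor $\delta^{-m}$ up to the $e^{m\rho}$ shift; once this is in hand, assembling \eqref{final formula Z(m)} is a bookkeeping exercise combining the Verma identification $H\cong M(\frac{a}{x}-\rho)$ with Proposition~\ref{character and gen function}.
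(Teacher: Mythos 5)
Your proposal follows the paper's proof essentially step for step: reduce to $\chi_{A(m)}$ via Proposition~\ref{character and gen function}, factor through $\Phi_m$ and $g$ via Theorem~\ref{final factorization A(m)}, invoke the Etingof--Kirillov formula \eqref{kirillov}, diagonalize $ge^h = x^{-1}e^h x$ and use equivariance \eqref{equiv}, compute $ev(x^{-1}\cdot(y_1\cdots y_n)^m) = e^{m\rho}\delta^{-m}$, and fix $c=1$ from the isolated fixed point in $\CM_0$. The only cosmetic slip is attributing the identity $\chi_{A(m)}(e^h)=ev(\chi_{\Phi_m}(ge^h))$ to cyclicity of trace, when it is simply pulling the linear map $\emph{Id}\otimes ev$ out of the trace; this does not affect the argument.
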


\begin{remark} Technically, the above proof of Theorem~\ref{final computation character} only holds for natural numbers $m$ and for generic $\frac ax$. However, the coefficients of $Z(m)$ and $Y_{\frac ax,m}$ are rational functions of both $m$ and $\frac ax$. Therefore, since the equality \eqref{final formula Z(m)} holds for infinitely many $m$ and for generic $\frac ax$, it holds identically. Similarly, once the factorization Theorem~\ref{final factorization A(m)} is proven for natural numbers $m$, it holds identically in $m$ (one must just be a bit careful with the definition of $S^{mn}$ for non-natural $m$, which will be an infinite dimensional space). \\
\end{remark}

\subsection{}
\label{connection to Braverman}

In the rest of the paper, we will take the limit $m\rightarrow \infty$ of Theorem~\ref{final computation character} and obtain the finite-dimensional statement of Corollary 3.7 of ~\cite{B1}. Let us henceforth write $m=e^Px^{-1}$, where $P\in \BC$. To simplify notations, we will write $Y_P=Y_{\frac ax,m}$. Consider the element $\rho^{\vee}\in \fh$ such that $\langle\rho^{\vee},\alpha_i\rangle=1$ for any simple root $\alpha_i$. In this section we will use two sets of coordinates on $\fh$: the \emph{old coordinate} $h$, and the \emph{new coordinate} given by $h'=h+2P \rho^{\vee}$. Under this coordinate transformation, the symbol $e^{\lambda}$ transforms to $e^{\lambda}\cdot e^{2P\langle \rho^{\vee}, \lambda\rangle}$.
Recall from Section~\ref{differential operators} that in the old coordinate $h$ we have
$$
Y_P=\sum_{\lambda \in \frac ax+Q^-} e^{\lambda} c_{\lambda,P}
$$
where we write $c_{\lambda,P}$ to emphasize the dependence on $P$. As in Section~\ref{differential operators}, the coefficients $c_{\lambda,P}$ are given by $c_{\frac ax,P}=1$ and
\begin{equation} \label{recurrence}
c_{\lambda,P}\cdot \left((\lambda,\lambda)-\frac {(a,a)}{x^2}\right)=\frac {2e^P}x\left(\frac {e^P}x+1\right)\sum_{\alpha\in
R^+}\sum_{j\geq 1} j \cdot c_{\lambda+j\alpha,P}
\end{equation}
We assume generic $\dsp \frac ax$. For $\lambda=\dsp \frac ax-(d_1\alpha_1+...+d_{n-1}\alpha_{n-1})$ we have
$$
d_1+...+d_{n-1}=\langle\rho^{\vee}, -\lambda\rangle+\langle \rho^{\vee}, \frac ax\rangle.
$$
From \eqref{recurrence}, one can prove by induction on $\lambda$ that $c_{\lambda,P}\sim e^{2P(d_1+...+d_{n-1})}$, i.e.
\begin{equation} \label{convergence}
\lim_{P\rightarrow \infty} \dsp \frac {c_{\lambda,P}}{e^{2\langle\rho^{\vee}, -\lambda\rangle}\cdot e^{2\langle \rho^{\vee}, \frac ax\rangle}}=\lim_{P\rightarrow \infty} \dsp \frac {c_{\lambda,P}}{e^{2P(d_1+...+d_{n-1})}}=:c_{\lambda} \in \BC
\end{equation}
Writing out the power series $Y_P$ in the new coordinate $h'$ gives us
$$
Y_P=e^{2P\langle \rho^{\vee}, \frac ax\rangle}\cdot \sum_{\frac ax+\lambda \in Q^-} e^{\lambda} \tilde{c}_{\lambda,P}
$$
where $\tilde{c}_{\lambda,P}=c_{\lambda,P}\cdot e^{2P\langle \rho^{\vee}, \lambda\rangle}\cdot e^{-2P\langle \rho^{\vee}, \frac ax\rangle}$. Therefore relation \eqref{convergence} implies that
$$
\lim_{P\rightarrow \infty} \tilde{c}_{\lambda,P}=c_{\lambda}
$$
This implies that we have the following termwise convergence of power series, in the new coordinate:
\begin{equation} \label{limit}
\lim_{P\rightarrow \infty} e^{-2P\langle \rho^{\vee}, \frac ax\rangle}\cdot Y_P=\sum_{\lambda \in \frac ax+Q^-} e^{\lambda}c_{\lambda}=:Y
\end{equation}
Since in the new coordinate $h'$ the functions $e^{-2P\langle \rho^{\vee}, \frac ax\rangle}\cdot Y_P$ have highest term $e^{\frac ax}$, then $Y$ will be a power series with highest term $e^{\frac ax}$. As described in ~\cite{E1}, in the new coordinate $h'$ we have
$$
L(e^P)=\Delta_{\fh}- \frac {2e^P}x\left(\dsp \frac {e^P}x+1\right)\sum_{\alpha \in R^+} \dsp \frac {1}{e^{\alpha}\cdot e^{2P\langle \rho^{\vee}, \alpha\rangle}(1-e^{-\alpha}\cdot e^{-2P\langle \rho^{\vee}, \alpha\rangle})^2}
$$
For $\alpha\in R^+$ we have $\langle \rho^{\vee}, \alpha\rangle \geq 1$, with equality if and only if $\alpha$ is a simple root. Thus as $P\rightarrow \infty$, the summands in the above expression only survive if $\alpha$ is a simple root, otherwise they go to zero. Therefore, in the new coordinate,
$$
L=\textrm{lim}_{P\rightarrow \infty} L(e^P)=\Delta_{\fh}-\dsp \frac 2{x^2}\sum_{\alpha \textrm{ simple}} e^{-\alpha}
$$
This operator $L$ is precisely the \emph{quantum Toda Hamiltonian} (see ~\cite{B1}). Because the eigenfunctions $e^{-2P\langle \rho^{\vee}, \frac ax\rangle}\cdot Y_P$ converge as $P\rightarrow \infty$, their limit $Y$ will be the unique eigenfunction of $L$ with eigenvalue $\frac {(a,a)}{x^2}$ and highest term $e^{\frac ax}$. \\

\subsection{} 

Keep the notation $m=e^Px^{-1}$ from the previous section. Recall from \eqref{coeffs Z(m)} that (in the old coordinate $h$):
$$
Z(e^Px^{-1})=\sum_{\gamma\in Q^-} e^{\gamma} \sum_{d\in \CM_{\gamma}^{T\times \BC^*}} \prod_{w\in \CT_{d} \CM}  \frac {w+e^P}{w}
$$
In the new coordinate $h'$, the above becomes:
$$
Z(e^Px^{-1})=\sum_{\gamma\in Q^-} e^{\gamma} \cdot e^{2P\langle \rho^{\vee}, \gamma\rangle}\sum_{d\in \CM_{\gamma}^{T\times \BC^*}} \prod_{w\in \CT_{d} \CM}  \frac {w+e^P}{w}
$$
But $2\langle \rho^{\vee}, -\gamma\rangle$ is precisely the dimension of $\CM_{\gamma}$, and therefore
$$
Z(e^Px^{-1})=\sum_{\gamma\in Q^-} e^{\gamma} \sum_{d\in \CM_{\gamma}^{T\times \BC^*}} \prod_{w\in \CT_{d} \CM} \frac {w+e^P}{we^P}
$$
Letting $P\rightarrow \infty$ we notice that the above series converges termwise
$$
\lim_{P\rightarrow \infty} Z(e^Px^{-1})=\sum_{\gamma\in Q^-} e^{\gamma} \sum_{d\in \CM_{\gamma}^{T\times \BC^*}} \prod_{w\in \CT_{d} \CM}  \frac {1}{w}=\sum_{\gamma\in Q^-} e^{\gamma} \int_{\CM_{\gamma}} 1 =Z
$$
where $Z$ is given by \eqref{function Z}. In the above relation, the middle equality is proven by the equivariant integration result in Corollary~\ref{integral formula}. In the old coordinate $h$, Theorem~\ref{final computation character} tells us that
$$
Z(e^Px^{-1})=Y_P\cdot e^{-\frac ax}\cdot \dsp \left(\prod_{\alpha\in R^+} \frac 1{1-e^{-\alpha}}\right)^{e^Px^{-1}+1}
$$
In the new coordinate, this equality becomes
$$
Z(e^Px^{-1})=Y_P\cdot e^{-\frac ax}\cdot e^{-2P\langle \rho^{\vee}, \frac ax\rangle}\cdot \dsp \left(\prod_{\alpha\in R^+} \frac 1{1-e^{-\alpha}\cdot e^{-2P\langle \rho^{\vee}, \alpha\rangle}}\right)^{e^Px^{-1}+1}
$$
Since $\langle \rho^{\vee}, \alpha\rangle \geq 1$ for every $\alpha\in R^+$, as $P\rightarrow \infty$ the last factor in the above product converges to 1. Taking limits as $P\rightarrow \infty$ in the above relation therefore gives us
$$
Z=\lim_{P\rightarrow \infty} Z(e^Px^{-1})=e^{-\frac ax}\cdot\lim_{P\rightarrow \infty} e^{-2P\langle \rho^{\vee}, \frac ax\rangle}\cdot Y_P
$$
By equation \eqref{limit}, the right hand side in the above is nothing but $e^{-\frac ax}\cdot Y$. Therefore, we obtain the finite-dimensional statement of Corollary 3.7 in ~\cite{B1}:

\begin{corollary} \label{conn to Braverman}
The generating function $Z$ of \eqref{function Z} equals $Y\cdot e^{-\frac ax}$, where $Y$ is the eigenfunction of the quantum Toda hamiltonian with eigenvalue $ \frac {(a,a)}{x^2}$ and highest term $e^{\frac ax}$.
\end{corollary}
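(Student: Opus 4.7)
The plan is to obtain Corollary~\ref{conn to Braverman} as the $m\to\infty$ degeneration of Theorem~\ref{final computation character}. Concretely, I would substitute $m=e^Px^{-1}$ and let $P\to\infty$, while simultaneously performing the shift of coordinates $h'=h+2P\rho^{\vee}$ on the Cartan subalgebra $\fh$, where $\rho^{\vee}\in\fh$ is characterized by $\langle\rho^{\vee},\alpha_i\rangle=1$ for every simple root. Under this shift each formal symbol $e^{\lambda}$ acquires the factor $e^{2P\langle\rho^{\vee},\lambda\rangle}$, so both sides of \eqref{final formula Z(m)} need to be tracked in the new coordinate.

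The first key step is to note that in the new coordinate the Calogero--Sutherland operator $L(m)$ degenerates to the quantum Toda Hamiltonian. The potential term picks up $e^{-2P\langle\rho^{\vee},\alpha\rangle}$ for each $\alpha\in R^+$; since $\langle\rho^{\vee},\alpha\rangle\geq 1$ with equality iff $\alpha$ is simple, only the simple-root summands survive and one recovers $L=\Delta_{\fh}-\frac{2}{x^2}\sum_{\alpha\text{ simple}}e^{-\alpha}$. The second key step is to analyze the coefficients $c_{\lambda,P}$ in $Y_{a/x,m}=\sum_{\lambda}e^{\lambda}c_{\lambda,P}$ using the recursion coming from the eigenvalue equation. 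An induction on the depth $d_1+\cdots+d_{n-1}$ of $\lambda=a/x-\sum d_i\alpha_i$ below $a/x$ yields $c_{\lambda,P}\sim e^{2P(d_1+\cdots+d_{n-1})}$, so that the renormalized power series $e^{-2P\langle\rho^{\vee},a/x\rangle}\cdot Y_{a/x,m}$ converges termwise (in the new coordinate) to a power series $Y$ with highest term $e^{a/x}$, which must be the unique eigenfunction of $L$ with eigenvalue $(a,a)/x^2$ normalized that way.

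Similarly for $Z(m)$: in the new coordinate the $\gamma$-summand picks up $e^{2P\langle\rho^{\vee},\gamma\rangle}$, and this cancels exactly with the factor $(e^P)^{\dim\CM_{\gamma}/2}=e^{2P\langle\rho^{\vee},-\gamma\rangle}$ arising when one divides each tangent weight appearing in $c(\CT\CM_{\gamma},e^P)$ by $e^P$. Hence, using Corollary~\ref{integral formula} at each fixed point, $Z(e^Px^{-1})$ converges termwise to the purely enumerative series $Z$ of \eqref{function Z}. The remaining trigonometric factor $\bigl(\prod_{\alpha\in R^+}(1-e^{-\alpha}e^{-2P\langle\rho^{\vee},\alpha\rangle})^{-1}\bigr)^{e^Px^{-1}+1}$ on the right of \eqref{final formula Z(m)} converges termwise to $1$, once again because $\langle\rho^{\vee},\alpha\rangle\geq 1$ for all positive roots. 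Combining these three limits in \eqref{final formula Z(m)} yields $Z=Y\cdot e^{-a/x}$.

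The main obstacle is justifying that all of these limits are genuine termwise convergences of formal power series rather than merely formal manipulations. This reduces to the asymptotic estimate $c_{\lambda,P}\sim e^{2P(d_1+\cdots+d_{n-1})}$, which is the one nontrivial input and is established by induction using the Calogero--Sutherland recursion on the $c_{\lambda,P}$; once this estimate is in hand, every other limit is a routine expansion in $e^{-P}$.
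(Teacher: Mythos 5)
Your proposal is correct and follows essentially the same approach as the paper: substituting $m=e^Px^{-1}$, passing to the shifted coordinate $h'=h+2P\rho^{\vee}$, establishing the asymptotics $c_{\lambda,P}\sim e^{2P(d_1+\cdots+d_{n-1})}$ via the recursion, and taking termwise limits of both sides of \eqref{final formula Z(m)}. The paper's argument in Section~\ref{connection to Braverman} contains all the same steps, including the observation that $\langle\rho^{\vee},\alpha\rangle\geq 1$ kills the non-simple roots in the potential and trivializes the remaining trigonometric factor.
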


\end{document}